\newtheorem{thm}{Theorem}[section]
\newtheorem*{thma}{Theorem A}
\newtheorem*{thmb}{Theorem B}
\newtheorem{lem}[thm]{Lemma}
\newtheorem{cor}[thm]{Corollary}
\newtheorem{prop}[thm]{Proposition}
\theoremstyle{definition}
\newtheorem{defi}[thm]{Definition}
\newtheorem{condtn}[thm]{Condition}
\newtheorem*{question}{Question}
\theoremstyle{remark}
\newtheorem{remark}[thm]{Remark}
\newtheorem*{acknow}{Acknowledgments}
\numberwithin{equation}{section}
\newcommand{\tr}{\mathrm{tr}}
\newcommand{\LF}{\mathcal{L}}
\begin{document}

\title{Uniqueness of closed self-similar solutions to $\sigma_k^{\alpha}$-curvature flow}

\author{Shanze Gao}
\address{Department of Mathematical Sciences, Tsinghua University, Beijing
		100084, P.R. China}
\email{gsz15@mails.tsinghua.edu.cn}
\thanks{The first and the third authors were supported in part by NSFC grant No.~11271213 and No.~11671223.}

\author{Haizhong Li}
\email{hli@math.tsinghua.edu.cn}
\thanks{The second author was supported in part by NSFC grant No.~11271214 and No.~11671224. }
	
\author{Hui Ma}
\email{hma@math.tsinghua.edu.cn}
	
\subjclass[2010]{Primary 53C44; Secondary 53C40}	
\date{}
\keywords{$\sigma_k$ curvature, self-similar solution}
	
\begin{abstract}
By adapting the test functions introduced by Choi-Daskaspoulos \cite{c-d} and Brendle-Choi-Daskaspoulos \cite{b-c-d} and exploring properties of the $k$-th elementary symmetric functions $\sigma_{k}$ intensively,  we show that for any fixed $k$ with $1\leq k\leq n-1$, any strictly convex closed hypersurface in $\mathbb{R}^{n+1}$ satisfying $\sigma_{k}^{\alpha}=\langle X,\nu \rangle$, with $\alpha\geq \frac{1}{k}$, must be a round sphere. In fact, we prove a uniqueness result for
any strictly convex closed hypersurface in $\mathbb{R}^{n+1}$ satisfying
$F+C=\langle X,\nu \rangle$, where $F$ is a positive homogeneous smooth
symmetric function of the principal curvatures and $C$ is a
constant.
\end{abstract}
	
\maketitle
	
\section{Introduction}
\label{Sec:Intro}

Let $X:M\rightarrow \mathbb{R}^{n+1}$ be a smooth embedding of a closed, orientable hypersurface in $\mathbb{R}^{n+1}$ with $n\geq 2$, satisfying
\begin{equation}\label{1-1}
\sigma_{k}^{\alpha}=\langle X, \nu \rangle
\end{equation}
where $\nu$ is the outward unit normal vector field of $M$, $\alpha>0$, $1\leq k\leq n$ and
$\sigma_k$ is the $k$-th elementary symmetric functions of principal curvatures of $M$.

This type of equation is important for the following curvature flow
	\begin{equation}\label{f}
	\tilde{X}_{t}=-\sigma_{k}^{\alpha}\nu.
	\end{equation}
	Actually, if $X$ is a solution of \eqref{1-1}, then
	$$ \tilde{X}(x,t)=((k\alpha+1)(T-t))^{\frac{1}{1+k\alpha}} X(x) $$
	gives rise to the solution of \eqref{f} up to a tangential diffeomorphism \cite{m}. So in the same spirit, we call the solutions of \eqref{1-1} self-similar solutions of \eqref{f}.

For $k=1$, G. Huisken proved the following famous result:
\begin{thm}[Huisken, \cite{h90}]\label{sigma1}
		If $M$ is a closed hypersurface in $\mathbb{R}^{n+1}$, with non-negative mean curvature $\sigma_{1}$ and satisfies the equation
		\begin{equation*}
		\sigma_{1}=\langle X, \nu\rangle,
		\end{equation*}
		then $M$ must be a round sphere.
	\end{thm}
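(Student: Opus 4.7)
The plan is to use the drift-Laplacian strategy that is standard for self-shrinkers of the mean curvature flow. Introduce on $M$ the operator $\mathcal{L}u := \Delta u - \langle X, \nabla u\rangle$; a short computation shows that it is self-adjoint with respect to the Gaussian measure $e^{-|X|^2/2}\,d\mu_M$, so on the closed hypersurface $M$
\[
\int_M u\,\mathcal{L}v\,e^{-|X|^2/2}\,d\mu_M \;=\; -\!\int_M \langle \nabla u, \nabla v\rangle\,e^{-|X|^2/2}\,d\mu_M.
\]
Combined with Simons' identity on hypersurfaces in $\mathbb{R}^{n+1}$, this is the main engine of the proof.

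\emph{Step 1: strict positivity of $H$.} Computing $\mathcal{L} H$ directly, using the self-shrinker equation $H = \langle X,\nu\rangle$ to contract gradient terms, one obtains
\[
\mathcal{L} H \;=\; H\bigl(1 - |A|^2\bigr),
\]
so $H$ satisfies a linear elliptic equation on $M$. The strong maximum principle excludes interior zeros of the nonnegative function $H$ unless $H \equiv 0$; but $H \equiv 0$ would force $\langle X,\nu\rangle \equiv 0$, placing $M$ on a cone through the origin, which is incompatible with closedness. Hence $H > 0$ everywhere on $M$.

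\emph{Step 2: umbilicity via Gaussian integration.} With $H>0$ in hand, consider the scale-invariant umbilicity defect $|\mathring A|^2/H^2$, where $|\mathring A|^2 := |A|^2 - H^2/n \geq 0$ vanishes precisely at umbilic points. Using Simons' identity, the self-shrinker equation, and the Codazzi symmetry of $\nabla A$ (via a Kato-type estimate), one derives a pointwise differential inequality of the schematic form
\[
\mathcal{L}\!\left(\frac{|\mathring A|^2}{H^2}\right) \;-\; \frac{2}{H}\Bigl\langle \nabla H,\, \nabla\!\tfrac{|\mathring A|^2}{H^2}\Bigr\rangle \;\geq\; 0,
\]
with equality forcing $|\mathring A| \equiv 0$. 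Multiplying through by an appropriate test function (a power of $|\mathring A|^2/H^2$ times $H^2$) and integrating against $e^{-|X|^2/2}\,d\mu_M$, self-adjointness of $\mathcal{L}$ collapses the bulk terms and reduces the inequality to an integrated gradient bound that forces $|\mathring A|\equiv 0$. Hence $M$ is totally umbilic, and the only closed totally umbilic hypersurface in $\mathbb{R}^{n+1}$ is a round sphere; the self-shrinker equation then fixes its radius to $\sqrt{n}$.

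The technical heart — and the main obstacle — is the pointwise inequality in Step 2: its sign depends on delicate cancellations between Simons-type curvature terms, the Kato-type estimate for the Codazzi-symmetric tensor $\nabla A$, and the specific algebraic structure afforded by $H = \langle X,\nu\rangle$ (so that $\nabla_i H = h_{ij}\langle X, e_j\rangle$ can be substituted back into gradient terms). Once this inequality is in place, the remaining integration by parts on the closed manifold against the Gaussian weight is routine, and the final rigidity step for closed totally umbilic hypersurfaces is classical.
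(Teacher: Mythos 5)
The paper states this theorem as a cited result from Huisken \cite{h90} and gives no proof of its own, so there is no in-paper argument to compare against. Your proposal is a reconstruction of Huisken's original method — the drift Laplacian $\mathcal{L}=\Delta-\langle X,\nabla(\cdot)\rangle$ self-adjoint with respect to $e^{-|X|^2/2}d\mu$, the identity $\mathcal{L}H=(1-|A|^2)H$ together with the strong maximum principle to get $H>0$, and an integral argument for $|A|^2/H^2$ — and this is indeed the right approach, with Step 1 carried out correctly.

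There are, however, two points in Step 2 to fix. The sign of the drift correction is wrong, and the appeal to a ``Kato-type estimate'' is misplaced: the relevant statement is the exact identity
\[
\mathcal{L}\!\left(\frac{|A|^2}{H^2}\right)+\frac{2}{H}\Bigl\langle\nabla H,\,\nabla\frac{|A|^2}{H^2}\Bigr\rangle
=\frac{2}{H^4}\bigl|H\nabla_i h_{jk}-h_{jk}\nabla_i H\bigr|^2\;\geq 0,
\]
which follows from Simons' identity and the shrinker equation: the cubic terms cancel, giving $\mathcal{L}h_{ij}=(1-|A|^2)h_{ij}$, hence $\mathcal{L}|A|^2=2(1-|A|^2)|A|^2+2|\nabla A|^2$ and $\mathcal{L}H^2=2(1-|A|^2)H^2+2|\nabla H|^2$, and the quotient rule produces a perfect square only with the $+$ sign. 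No Kato inequality enters. With the correct sign the test function is simply $H^2$: integrating the identity against $H^2e^{-|X|^2/2}$ and using self-adjointness of $\mathcal{L}$, the drift terms cancel exactly and one obtains $H\nabla_ih_{jk}\equiv h_{jk}\nabla_iH$. Second, this does not directly give $|\mathring A|\equiv 0$ as you assert; one must insert it into the Codazzi symmetry $\nabla_ih_{jk}=\nabla_jh_{ik}$ to get $h_{jk}\nabla_iH=h_{ik}\nabla_jH$, and then note (since at the point of $M$ farthest from the origin all principal curvatures are positive, the parallel tensor $A/H$ is positive definite everywhere) that $\nabla H$ must vanish identically; hence $A$ is parallel and $M$ is the round sphere of radius $\sqrt n$. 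Once these two corrections are made your argument closes.
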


For $k=n$, very recently, Choi-Daskalopoulos \cite{c-d}, further, Brendle-Choi-Daskalopoulos \cite{b-c-d} proved the following remarkable result:
\begin{thm}[Choi-Daskalopoulos \cite{c-d}, Brendle-Choi-Daskalopoulos \cite{b-c-d}]\label{nalp}
Let $M$ be a closed, strictly convex hypersurface in $\mathbb{R}^{n+1}$ satisfying
\begin{equation*}
\sigma_{n}^{\alpha}=\langle X, \nu \rangle.
\end{equation*}
If $\alpha> \frac{1}{n+2}$, then $M$ must be a round sphere; if $\alpha=\frac{1}{n+2}$, then $M$ is an ellipsoid.
\end{thm}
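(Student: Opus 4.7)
The natural arena for this problem is the support function picture on the sphere, which converts the soliton equation into a single scalar PDE. The plan is to parametrize the strictly convex hypersurface by its support function $u:\mathbb{S}^n\to(0,\infty)$, $u(z)=\langle X,z\rangle\circ\nu^{-1}(z)$. In a local orthonormal frame $\{e_i\}$ on $\mathbb{S}^n$ with Levi-Civita connection $\nabla$, the matrix of principal radii of curvature is $b_{ij}=\nabla_i\nabla_j u+u\delta_{ij}$, and the Gauss curvature is $\sigma_n=1/\det(b_{ij})$. The soliton equation $\sigma_n^\alpha=\langle X,\nu\rangle$ thus becomes the Monge-Amp\`ere type equation
\[
\det(b_{ij})=u^{-1/\alpha}
\]
on $\mathbb{S}^n$, with strict convexity of $M$ encoded as positive-definiteness of $(b_{ij})$.

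With this setup, I would follow the Choi-Daskalopoulos strategy and design a tensorial test function measuring the deviation of $b$ from a multiple of the identity, conformally rescaled so as to respect the scaling at the critical exponent. A natural candidate is
\[
Z=u^{-2\gamma}\bigl(n\,\mathrm{tr}(b^2)-(\mathrm{tr}\,b)^2\bigr),
\]
with $\gamma=\gamma(\alpha,n)$ fixed later. Denoting by $\mathcal{L}=b^{ij}\nabla_i\nabla_j$ the linearization of $\log\det(b)$, I would apply $\mathcal{L}$ to $Z$, substitute the PDE for the top-order terms, and commute derivatives using $\nabla_i b_{jk}=\nabla_j b_{ik}$ and the Ricci identities on $\mathbb{S}^n$. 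The goal is to show that at an interior maximum of $Z$,
\[
0\geq\mathcal{L}Z\geq c(\alpha,n)\cdot u^{-2\gamma}\bigl|b-\tfrac{1}{n}(\mathrm{tr}\,b)\,\delta\bigr|^2\cdot P(b),
\]
modulo gradient terms which vanish at the maximum, where $P(b)>0$ is a symmetric polynomial in the eigenvalues of $b$ and $c(\alpha,n)\geq 0$ precisely for $\alpha\geq 1/(n+2)$, with strict positivity for $\alpha>1/(n+2)$. The strong maximum principle then forces $Z\equiv0$ in the supercritical regime, so $b_{ij}$ is pointwise a scalar multiple of $\delta_{ij}$; an elementary integration of $\nabla_i\nabla_j u+u\delta_{ij}=\lambda\delta_{ij}$ on $\mathbb{S}^n$ combined with the PDE then forces $u$ to be a positive constant, and $M$ is a round sphere.

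For the critical case $\alpha=1/(n+2)$, one observes that the equation $\det(b)=u^{-(n+2)}$ is invariant under the unimodular linear group $\mathrm{SL}(n+1,\mathbb{R})$ acting on $\mathbb{R}^{n+1}$, since this is precisely the affine normal flow equation. As the round sphere is a solution, so is every element of its $\mathrm{SL}(n+1)$-orbit, namely every centered ellipsoid, and the rigidity argument can show only that every solution lies in this orbit. The main obstacle of the approach is the algebraic bound in the second step: after computing $b^{ij}\nabla_i\nabla_j b_{k\ell}$ and invoking the PDE one is left with a cubic-in-$b$ remainder that must dominate the gradient-squared error with a sharp constant. Identifying the correct exponent $\gamma$ and organising the remainder via Newton-MacLaurin type inequalities on the eigenvalues of $b$ is exactly where the critical value $1/(n+2)$ is forced, and this is the technical heart of the proof.
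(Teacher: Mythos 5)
The paper does not give its own proof of this theorem; it is quoted from Choi--Daskalopoulos and Brendle--Choi--Daskalopoulos, and the paper's own machinery (Theorem~\ref{thmg}, Theorem~\ref{sigmak2+}, Theorem A) recovers the $k=n$ case only for $\alpha>1/n$ when $C=0$ (since Theorem~\ref{thmg} needs $\beta=n\alpha>1$), or for $\alpha\ge 1/(n+2)$ when $C<0$. That said, the paper's approach is a hypersurface-intrinsic rendering of the Choi--Daskalopoulos argument, so the comparison is still informative, and your proposal has a genuine gap.

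You identify the right framework (support function, Monge--Amp\`ere equation, maximum principle applied to a tensorial test function), but the test function you propose is of the wrong shape, and this is not a cosmetic issue. The whole Choi--Daskalopoulos argument, and its translation in this paper, turns on an \emph{additive} combination of the form $u\,\lambda_{\max}(b)-\tfrac{n\alpha-1}{2n\alpha}\bigl(|\nabla u|^2+u^2\bigr)$ and $u\,\mathrm{tr}\,b-\tfrac{n(n\alpha-1)}{2n\alpha}\bigl(|\nabla u|^2+u^2\bigr)$, which in the hypersurface picture are exactly the paper's $\tilde W=F\lambda_{\min}^{-1}-\tfrac{\beta-1}{2\beta}|X|^2$ and $Z=F\,\mathrm{tr}\,b-\tfrac{n(\beta-1)}{2\beta}|X|^2$ with $F=\sigma_n^\alpha=u$ and $\beta=n\alpha$. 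The point of the $|X|^2$ summand is that its evolution under $\mathcal{L}$ is extraordinarily clean (Proposition~\ref{beqn}~(5)), and subtracting it with the sharp coefficient $\tfrac{\beta-1}{2\beta}$ is precisely what produces the cancellations in Lemmas~\ref{west} and~\ref{z} that isolate the nonnegative blocks $J_1,J_2,J_3$ (resp.\ $L_1,L_2,L_3$). Your multiplicative conformal rescaling $Z=u^{-2\gamma}\bigl(n\,\mathrm{tr}(b^2)-(\mathrm{tr}\,b)^2\bigr)$ has no analogue of this absorption mechanism. You assert the existence of an inequality $\mathcal{L}Z\ge c(\alpha,n)\,u^{-2\gamma}\bigl|b-\tfrac1n(\mathrm{tr}\,b)\delta\bigr|^2P(b)$ at a maximum of $Z$, with $c\ge0$ iff $\alpha\ge\tfrac{1}{n+2}$, but this is exactly the step that needs to be proved, and with this test function there is no evident reason for the gradient terms to organize themselves into such a form, nor for the Lagrangian-multiplier computation to single out the exponent $\tfrac{1}{n+2}$.

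Your sketch also collapses a genuinely two-stage argument into one. The actual strategy first applies the maximum principle to the single-eigenvalue quantity $\tilde W$ (via the Brendle--Choi--Daskalopoulos Lemma~\ref{bcd} to handle non-smoothness of $\lambda_{\min}$) to conclude that a maximum point $x_0$ of $\tilde W$ is umbilic (Lemma~\ref{wmaxg}); only then does one observe $Z\le n\tilde W\le n\tilde W(x_0)=Z(x_0)$ and run the strong maximum principle on $Z$ near $x_0$ to propagate constancy across $M$ (proof of Theorem~\ref{thmg}). Neither stage appears in your proposal, and the ``traceless norm'' quantity you propose does not naturally interpolate between them. Finally, for $\alpha=\tfrac{1}{n+2}$: the $\mathrm{SL}(n+1)$-invariance heuristic is correct but is not a classification proof; the actual treatment of the borderline case in Brendle--Choi--Daskalopoulos must cope with the degeneration of the differential inequality and requires an additional argument to show the solution set is exactly the $\mathrm{SL}(n+1)$-orbit of the round sphere.
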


\begin{remark}
The results of convergence of $\sigma_{n}^{\alpha}$-curvature flow could imply Theorem \ref{nalp}. In case $\alpha=\frac{1}{n}$, Theorem \ref{nalp} was contained in the results of B. Chow in \cite{Ch1}. In case $n=2$, Theorem \ref{nalp} was proved by B. Andrews for $\alpha=1$ in \cite{An-99}, by B. Andrews and X. Chen for $\frac{1}{2}\leq \alpha\leq 1$ in \cite{AC-12}. In case $\alpha=\frac{1}{n+2}$, Theorem \ref{nalp} was proved by B. Andrews in \cite{An-96}. The more properties of $\sigma_{n}^{\alpha}$-curvature flow were studied by W. J. Firey \cite{Fi}, B. Chow \cite{Ch1}, K. Tso \cite{Ts}, B. Andrews \cite{An-99}, P.-F. Guan and L. Ni \cite{g-n}, B. Andrews, P.-F. Guan and L. Ni \cite{AGN-16}, etc.
\end{remark}

From Theorem \ref{sigma1} and Theorem \ref{nalp}, the following natural question arises:
\begin{question}
For any fixed $k$ with $1\leq k\leq n-1$, let $M$ be a closed, strictly convex hypersurface in $\mathbb{R}^{n+1}$ satisfying \eqref{1-1}
 with $\alpha\geq \frac{1}{k}$. Can we conclude that $M$ must be a round sphere?
\end{question}

In this paper, we give an affirmative answer to the above question by proving the following result:

\begin{thm}\label{mainthm}
For any fixed $k$ with $1\leq k\leq n-1$, let $M$ be a closed, strictly convex hypersurface in $\mathbb{R}^{n+1}$ satisfying
\begin{equation*}
\sigma_{k}^{\alpha}=\langle X, \nu \rangle
\end{equation*}
 with $\alpha\geq \frac{1}{k}$. Then $M$ must be a round sphere.
\end{thm}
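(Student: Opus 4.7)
My plan is to exploit the generalisation advertised in the abstract and reduce the theorem to the assertion that any strictly convex closed hypersurface $M\subset\mathbb{R}^{n+1}$ satisfying $F+C=\langle X,\nu\rangle$ is a round sphere, where $F=\sigma_k^\alpha$ and $C$ is a constant (here $C=0$). Two structural facts about $F$ will do the work: $F$ is homogeneous of degree $k\alpha$, and $F^{1/(k\alpha)}=\sigma_k^{1/k}$ is concave on the positive cone $\Gamma_k^+$ by the Maclaurin inequalities; moreover, for $\alpha\ge 1/k$ the function $F$ is \emph{inverse-concave} in the principal curvatures, a property that will play the decisive role. I would work on $M$ with an orthonormal frame $\{e_i\}$ diagonalising the Weingarten operator at the point under consideration, so that $h_{ij}=\kappa_i\delta_{ij}$ and $F^{ij}:=\partial F/\partial h_{ij}$ is diagonal there, and record the basic identities $\nabla_i s=h_{ij}\langle X,e_j\rangle$ and $\nabla_i F=\nabla_i s$.

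\textbf{Choice of test function.} Adapting the tensor maximum-principle strategy of Choi--Daskalopoulos \cite{c-d} and Brendle--Choi--Daskalopoulos \cite{b-c-d}, I would introduce an auxiliary quantity that vanishes precisely on spheres, built from a weighted traceless combination of the second fundamental form and its derivatives. A natural candidate is a tensor of the form
\[
W_{ij}\;=\;\nabla_i\nabla_j\Phi(F)\;-\;\tfrac{1}{n}\bigl(g^{kl}\nabla_k\nabla_l\Phi(F)\bigr)g_{ij}
\]
for an appropriate power/logarithm $\Phi$, whose maximal eigenvalue $Z$ becomes the test function; equivalently, one may use a scalar quantity of the form $Z=F^{ij}\nabla_i s\,\nabla_j s-\psi(s,F)$, with $\psi$ tuned so that the quadratic form in $\nabla h$ appearing in the linearised equation has a definite sign. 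The weight $\Phi$ and the comparison function $\psi$ are chosen precisely so that the $\alpha\ge 1/k$ hypothesis enters through the homogeneity exponent.

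\textbf{Maximum principle.} I would then apply the linearised operator $\LF:=F^{ij}\nabla_i\nabla_j$ to $Z$. Differentiating $F+C=s$ twice, using the Codazzi equation, Simons-type identities, and $\nabla_i s=h_{ij}\langle X,e_j\rangle$, one obtains $\LF Z$ as a sum of a quadratic form in $\nabla h$ (with coefficients involving $F^{ij,kl}$) and lower-order curvature terms. The concavity of $\sigma_k^{1/k}$ gives $F^{ij,kl}\xi_{ij}\xi_{kl}\le \tfrac{1-k\alpha}{k\alpha}F^{-1}(F^{ij}\xi_{ij})^2$ for admissible $\xi$, and precisely the range $\alpha\ge 1/k$ makes the resulting quadratic form controllable by the first-order terms arising from $\nabla Z=0$ at an extremum. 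This forces $\LF Z\ge 0$ at any interior maximum, so the strong maximum principle yields $Z\equiv 0$; unwinding the definition of $Z$ gives $\kappa_1=\cdots=\kappa_n$, and closedness then identifies $M$ with a round sphere.

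\textbf{Main obstacle.} The genuinely hard part is the choice of $\Phi$ (or $\psi$) and the manipulation of $\LF Z$ so that the condition $\alpha\ge 1/k$ is used sharply. Unlike in the Gauss curvature case ($k=n$), $\sigma_k$ is not multiplicative and $F^{ij}$ has nontrivial algebraic structure, so the clean identities of \cite{b-c-d} must be replaced by Newton--Maclaurin-type inequalities, identities for derivatives of $\sigma_k$, and the inverse-concavity of $\sigma_k^\alpha$. Arranging the algebra so that all error and gradient terms combine to produce a maximum-principle-ready inequality is where I expect the bulk of the technical effort to lie.
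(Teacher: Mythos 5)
Your proposal correctly identifies the general strategy (maximum-principle test functions in the style of Choi--Daskalopoulos and Brendle--Choi--Daskalopoulos, with inverse-concavity of $\sigma_k^\alpha$ as the structural hypothesis), but it stops precisely at the point where the real content of the paper's proof begins, and the candidate test functions you suggest are not the ones that work.

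Concretely: you propose using either a traceless Hessian tensor $W_{ij}=\nabla_i\nabla_j\Phi(F)-\tfrac{1}{n}(\Delta_{\mathrm{tr}}\Phi(F))g_{ij}$ or a scalar $Z=F^{ij}\nabla_i s\,\nabla_j s-\psi(s,F)$. Neither is what the paper uses. The paper's quantities are $Z=F\,\tr b-\frac{n(\beta-1)}{2\beta}|X|^2$ and $\tilde{W}=F\lambda_{\min}^{-1}-\frac{\beta-1}{2\beta}|X|^2$, built from the \emph{inverse} of the second fundamental form, not from Hessians or from $F^{ij}\nabla_i s\,\nabla_j s$. Without the $\tr b$ / $b^{ij}$ structure, the evolution identities in Proposition~\ref{beqn}~(3),(4) (which produce the favourable $2F\,b^{ks}b^{pt}b^{lq}\frac{\partial F}{\partial h_{ij}}h_{sti}h_{pqj}$ term and the $Cb$-terms with the right sign when $C\le 0$) are simply unavailable, and I don't see how your proposed quantities recover them. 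Your Step~3 assertion that the strong maximum principle gives $Z\equiv 0$ also doesn't match the actual logic: one only concludes $Z$ is \emph{constant}, and the umbilicity has to be established separately, at the maximum point of $\tilde W$, via Lemma~\ref{wmaxg} \emph{before} the $Z$-argument is run; the two-step architecture (max of $\tilde W$ is umbilic, then strong maximum principle for $Z$ near that point, then propagate by connectedness) is essential and absent from your sketch.

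There is a second, independent gap: the boundary case $\alpha=1/k$. The paper's Theorem~\ref{thmg} handles $\beta=k\alpha>1$, i.e.\ $\alpha>1/k$; the endpoint $\alpha=1/k$ (where $\beta=1$ and the $|X|^2$-corrections vanish) requires a separate, sharper estimate (Lemma~\ref{l2l3}, Lemma~\ref{condmin}, Theorem~\ref{sigmak2+}). You don't distinguish this case, and the ``concavity of $\sigma_k^{1/k}$'' inequality you invoke, $F^{ij,kl}\xi_{ij}\xi_{kl}\le\frac{1-k\alpha}{k\alpha}F^{-1}(F^{ij}\xi_{ij})^2$, is the wrong tool: what is actually used is the convexity of $\log F$ in $\log\lambda$, i.e.\ the ``key inequality'' $\sum_i\lambda_i^{-1}\frac{\partial\log F}{\partial\lambda_i}y_i^2+\sum_{i,j}\frac{\partial^2\log F}{\partial\lambda_i\partial\lambda_j}y_iy_j\ge0$, which is not a direct consequence of concavity of $\sigma_k^{1/k}$ and requires its own proof (Lemmas~\ref{Dmk}--\ref{qfm}). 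In short, what you have written is a plausible plan, but it defers exactly the hard steps — the choice of test function, the algebraic identities for $\sigma_k$, and the endpoint case — that constitute the proof.
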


\begin{remark}
Theorem \ref{sigma1} implies Theorem \ref{mainthm} for the case $k=1$ and $\alpha=1$.  For $\alpha=\frac{1}{k}$, Theorem \ref{mainthm} was contained in the results of B. Chow \cite{Ch1,Ch2} and B. Andrews \cite{An94-1,An-96,An-00,An-07}. For general $k$ and $\alpha$, there are some partial results under certain pinching condition of the principal curvatures of hypersurface, see \cite{m}, \cite{A-M-12} and \cite{GaoMa}.
\end{remark}

In fact, we prove the following two theorems:

\begin{thma}\label{thma}
For any fixed $k$ with $1\leq k\leq n$, let $M$ be a closed, strictly convex hypersurface in $\mathbb{R}^{n+1}$ satisfying
\begin{equation}\label{sigmakc}
\sigma_{k}^{\alpha}+C=\langle X, \nu \rangle
\end{equation}
with constants $\alpha$ and $C$.
If  either $1 \leq k\leq n-1$, $C\leq 0$, $\alpha\geq \frac{1}{k}$, or,  $k=n$, $C<0$, $\alpha\geq \frac{1}{n+2}$, then $M$ must be a round sphere.
\end{thma}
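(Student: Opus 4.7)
The plan is to follow the maximum-principle framework of Choi--Daskalopoulos \cite{c-d} and Brendle--Choi--Daskalopoulos \cite{b-c-d}, adapted to the intermediate curvature function $F=\sigma_{k}^{\alpha}$. One constructs an auxiliary scalar $Z$ on $M$ that (i) vanishes identically on round spheres, (ii) satisfies an elliptic inequality $LZ\le 0$ at every interior maximum, where $L=F^{ij}\nabla_{i}\nabla_{j}$ is the linearization of $F$, and (iii) whose vanishing forces $M$ to be umbilical. Since $M$ is closed and $F^{ij}$ is positive definite on the convex cone of a strictly convex hypersurface, the strong maximum principle will then yield $Z\equiv 0$, and hence the sphere.

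To set up the computation I would differentiate the equation $F+C=u$, with $u=\langle X,\nu\rangle$, twice along $M$. Using $\nabla_{i}u=h_{ij}\langle X,\partial_{j}\rangle$ and the Codazzi identity, one obtains
\begin{equation*}
\nabla_{i}F \;=\; h_{ij}\langle X,\partial_{j}\rangle,\qquad \nabla_{i}\nabla_{j}F + u\,h_{ik}h_{kj} \;=\; h_{ij} + \langle X,\partial_{k}\rangle\nabla_{k}h_{ij},
\end{equation*}
which combine to give a tractable expression for $LZ$ in terms of $F$, $u$, $h_{ij}$ and $\nabla h$. Inspired by \cite{c-d,b-c-d}, the candidate test function is a weighted version of $|\nabla F|^{2}$ normalized by powers of $F$ and $u-C$, schematically
\begin{equation*}
Z \;=\; \frac{F^{ij}\,\nabla_{i}F\,\nabla_{j}F}{F^{\beta}\,(u-C)^{\gamma}},
\end{equation*}
with exponents $\beta,\gamma$ calibrated so that the principal contributions in $LZ$ combine favourably. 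This $Z$ vanishes on every round sphere, and on a strictly convex $M$ its vanishing forces $\nabla F\equiv 0$, hence constant principal curvatures.

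At an interior maximum $p$ of $Z$, the bad contributions in $LZ$ are (a) the indefinite quadratic $F^{pq,rs}\nabla_{i}h_{pq}\nabla^{i}h_{rs}$ coming from the second variation of $F=\sigma_{k}^{\alpha}$, and (b) inhomogeneous terms proportional to $C$. Term (a) is controlled using the Newton--MacLaurin inequalities and the concavity of $\sigma_{k}^{1/k}$ on the positive cone, with the lower bound $\alpha\ge 1/k$ ensuring that the homogeneity of $F^{\alpha}$ works in the right direction. Term (b) is where the sign hypothesis on $C$ enters: for $C\le 0$ we have $u-C\ge u>0$, and the cross terms involving $C$ collect with a favourable sign. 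After this bookkeeping, one expects $LZ\ge 0$ at $p$, which combined with $LZ\le 0$ from the second-derivative test forces $Z\equiv 0$ throughout $M$.

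\medskip

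\noindent\textbf{Main obstacle.} The crux is the sharp algebraic bookkeeping: calibrating the exponents $\beta,\gamma$ and absorbing the indefinite quadratic in (a) into the positive principal part. Unlike the Gauss curvature case $k=n$, where $\sigma_{n}$ is multiplicative and many expressions decouple, the intermediate $\sigma_{k}$ forces a delicate, sharp use of Newton--MacLaurin-type identities. The sign hypothesis $C\le 0$ for $1\le k\le n-1$ is essential: without it, the extra $C$-weighted terms have no definite sign and break the argument. The separate case $k=n$, $C<0$, $\alpha\ge 1/(n+2)$ should be handled with only minor modifications of the original Brendle--Choi--Daskalopoulos test function, since there $\sigma_{n}^{\alpha}$ enjoys substantially stronger convexity.
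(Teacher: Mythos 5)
Your proposal identifies the right high-level framework (maximum principle applied to an auxiliary quantity that vanishes on spheres, with $L=F^{ij}\nabla_i\nabla_j$ as the elliptic operator), but it stops at the level of a programme and the specific choices you make are not the ones that actually work. Three concrete gaps stand out.

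First, the test function. You propose $Z=F^{ij}\nabla_iF\,\nabla_jF/(F^{\beta}(u-C)^{\gamma})$, a weighted gradient quantity. The paper instead uses the pair
\[
Z \;=\; F\,\mathrm{tr}\,b-\frac{n(\beta-1)}{2\beta}|X|^2,\qquad
\tilde W \;=\; \frac{F}{\lambda_{\min}}-\frac{\beta-1}{2\beta}|X|^2,
\]
where $b$ is the inverse second fundamental form and $\beta=k\alpha$, and runs a two-stage argument: (i) apply the Brendle--Choi--Daskalopoulos viscosity lemma for the Lipschitz function $\lambda_{\min}$ to show the maximum of $\tilde W$ is attained at an umbilic point, and then (ii) apply the strong maximum principle to the smooth quantity $Z\le n\tilde W$ near that point. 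Your $Z$ does not obviously satisfy an elliptic differential inequality with a usable sign: when you expand $L\bigl(F^{ij}\nabla_iF\nabla_jF\bigr)$ you get third-order terms $\nabla\nabla\nabla h$ that do not cancel, and no calibration of $(\beta,\gamma)$ in the weight fixes this. The paper's choices are tuned precisely so that the third-order terms combine into manageable quadratic forms in $\nabla h$ via Codazzi and Simons' identity.

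Second, the key algebraic ingredient is missing. You say the indefinite quadratic $F^{pq,rs}\nabla_ih_{pq}\nabla^ih_{rs}$ is ``controlled using the Newton--MacLaurin inequalities and the concavity of $\sigma_k^{1/k}$.'' That is not what is needed. The paper's control comes from the inequality
\[
\sum_{i}\frac{\sigma_{k-1;i}}{\lambda_i\sigma_k}y_i^2+\sum_{i\neq j}\frac{\sigma_{k-2;ij}}{\sigma_k}y_iy_j\;\geq\;\Bigl(\sum_i\frac{\sigma_{k-1;i}}{\sigma_k}y_i\Bigr)^2,
\]
equivalently the convexity of $\log\sigma_k(e^A)$ on symmetric matrices. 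This is stronger and more specific than concavity of $\sigma_k^{1/k}$; the paper proves it by exhibiting a family of semi-positive-definite matrices $D_n^{(k)}$ built from the $\sigma_{k;i}$ and $\sigma_{k;ij}$. Without this the estimate on the second-variation term does not close.

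Third, your remark that the case $k=n$, $C<0$, $\alpha\ge 1/(n+2)$ ``should be handled with only minor modifications'' glosses over the real difficulty: for $\alpha\le 1/2$ (equivalently $\beta\le n/2$) the main estimate (Theorem~\ref{thmg}, which requires $\beta>1$) does not suffice, and one needs a separate, sharper rearrangement of the quadratic terms (Lemma~\ref{l2l3} combined with the Lagrange-multiplier optimization of Lemma~\ref{condmin}). The same issue arises at the boundary $\alpha=1/k$ for $1\le k\le n-1$. Your sketch never detects that the argument bifurcates into these regimes, which is exactly where the interesting work of the paper lies.
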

\begin{remark}
Choose $C=0$, Theorem A reduces to Theorem \ref{mainthm}. When $k=\alpha=1$, Theorem A implies the uniqueness of closed $\lambda-$hypersurfaces introduced by Cheng-Wei \cite{c-w}.
\end{remark}

Let $S_k (\lambda)$ denote the $k$-th power sum of the principal curvatures $\lambda_{1},\cdots,\lambda_{n}$, defined by $S_k (\lambda)=\sum_{i=1}^n \lambda_i^k$.
\begin{thmb}\label{thmb}
For any fixed $k$ with $k\geq 1$, let $M$ be a closed, strictly convex hypersurface in $\mathbb{R}^{n+1}$ satisfying
\begin{equation}\label{skc}
S_{k}^{\alpha}+C=\langle X, \nu \rangle
\end{equation}
with constants $\alpha$ and $C$.
If $\alpha\geq \frac{1}{k}$ and $C\leq 0$, then $M$ must be a round sphere.
\end{thmb}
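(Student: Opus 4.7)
\emph{Plan for Theorem~B.} The strategy parallels that of Theorem~A: starting from the self-similar equation $F+C=u$ with $F:=S_k^{\alpha}$ and $u:=\langle X,\nu\rangle$, I would extract the drift identities forced by that equation, construct a non-negative test quantity vanishing precisely on round spheres, and close the argument by the maximum principle. Translating so the origin lies inside the convex body bounded by $M$ gives $u>0$, and the hypothesis $C\le 0$ then yields $F=u-C\ge u>0$, so both $u$ and the denominators built from it will remain safely bounded away from zero.

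The first step is to record the structural data of $F$. Since $F$ is smooth, symmetric and homogeneous of degree $k\alpha\ge 1$ on the positive cone, its Weingarten derivative $F^{ij}:=\partial F/\partial h_{ij}=\alpha k\,S_k^{\alpha-1}(h^{k-1})^{ij}$ is positive definite, and one checks $F^{ij}h_{ij}=k\alpha\,F$ (Euler) together with $F^{ij}(h^2)_{ij}=\alpha k\,S_k^{\alpha-1}S_{k+1}$. Differentiating $F+C=u$ twice, invoking the Codazzi identity and the Gauss formula, gives the drift identity
\[
\LF F \;:=\; F^{ij}\nabla_i\nabla_j F - \langle X^T,\nabla F\rangle \;=\; k\alpha\,F - u\cdot \alpha k\, S_k^{\alpha-1}S_{k+1} \;=\; \alpha k\, S_k^{\alpha-1}\bigl(S_k - u\,S_{k+1}\bigr),
\]
together with analogous formulas for $\LF S_j$. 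These drift quantities all vanish identically on a round sphere, so appropriate combinations of them measure how far $M$ is from being umbilic.

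Next I would introduce a test function of the form $Z=G/(u-C_0)^{\beta}$, where $G\ge 0$ is a symmetric polynomial in the power sums that vanishes precisely when $\lambda_1=\cdots=\lambda_n$ --- natural candidates being $G=n\,S_{2k}-S_k^2\ge 0$ (Cauchy--Schwarz) or $G=n\,S_{k+1}-S_1\,S_k\ge 0$ (Chebyshev's sum inequality) --- and the parameters $C_0\ge 0$ and $\beta>0$ are tuned so that $u-C_0>0$ throughout $M$ and the effective homogeneities of numerator and denominator match. Applying $\LF$ to $Z$ and using $\nabla Z=0$ at an interior maximum to replace $\nabla G$ by a multiple of $\nabla u$ collapses $\LF Z$ into a purely algebraic inequality among $S_1,\dots,S_{2k+1}$ at that point. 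The hypotheses $\alpha\ge 1/k$ (giving $k\alpha\ge 1$) and $C\le 0$ (giving $F\ge u>0$) should enter precisely here to provide the slack needed to sign that inequality, forcing $G\le 0$ at the maximum and therefore $G\equiv 0$ on $M$. Umbilicity, and hence the conclusion that $M$ is a round sphere, then follows from the strong maximum principle.

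The main obstacle is precisely this algebraic reduction. Unlike the elementary symmetric functions $\sigma_k$, which enjoy the Newton--MacLaurin inequalities and the concavity of $\sigma_k^{1/k}$ on the G{\aa}rding cone, the power sums carry only Cauchy--Schwarz/Chebyshev-type inequalities (for instance $S_{j-1}S_{j+1}\ge S_j^2$) together with the convexity of $S_k^{1/k}$ viewed as a norm. Choosing the correct polynomial $G$ and the correct exponent $\beta$ so that the second-order terms produced by $\LF Z$ are controlled by those inequalities --- with $\alpha\ge 1/k$ and $C\le 0$ furnishing just enough room --- is the technical heart of the argument, and is the step where the machinery developed for $\sigma_k^{\alpha}$ in Theorem~A must be re-engineered for power sums.
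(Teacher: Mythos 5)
Your write-up is a plan, not a proof: you set up the self-similar identity, list plausible test quantities, and then explicitly defer the step on which everything hinges (``the technical heart of the argument... must be re-engineered for power sums''). As it stands there is no estimate controlling the third-order terms $h_{ijk}$ that $\LF$ produces, so the maximum-principle argument is never closed. In fact the paper's route is structurally different from what you sketch, and the difference is exactly what makes the estimate tractable: rather than a single ratio $G/(u-C_0)^{\beta}$, the paper runs a two-stage argument with additive Pogorelov-type quantities $\tilde{W}=F\lambda_{\min}^{-1}-\frac{\beta-1}{2\beta}|X|^{2}$ and $Z=F\,\tr b-\frac{n(\beta-1)}{2\beta}|X|^{2}$. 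The first stage shows the maximum point of $\tilde{W}$ is umbilic; the second uses the strong maximum principle for $Z$ to propagate that. The closing estimate is not a power-sum identity but a concavity-type statement --- the \emph{key inequality} iv) of Condition~\ref{condtn} for $\log F$ --- which for $F=S_k^{\alpha}$ is a single Cauchy--Schwarz application,
\[
\Bigl(\sum_i \tfrac{\lambda_i^{k-1}}{S_k}y_i\Bigr)^{2}\le\Bigl(\sum_i\tfrac{\lambda_i^{k}}{S_k}\Bigr)\Bigl(\sum_i\tfrac{\lambda_i^{k-2}}{S_k}y_i^{2}\Bigr),
\]
together with the monotonicity condition iii), which is immediate for $S_k$. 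Once these are checked, Theorem~B for $\alpha>\tfrac1k$ is an instance of the general Theorem~\ref{thmg}; the borderline case $\alpha=\tfrac1k$ still needs the separate refined computation of Theorem~\ref{sk2+}, a distinction your proposal does not register.

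Two concrete gaps to highlight. First, your candidate numerators $G=nS_{2k}-S_k^2$ or $G=nS_{k+1}-S_1S_k$ do vanish exactly at umbilic points, but there is no attempt to show that $\LF Z$ for $Z=G/(u-C_0)^{\beta}$ has a sign at an interior maximum; the gradient terms $|\nabla h|^2$ generated by $\LF G$ have mixed signs and need a structural inequality of the paper's Condition~\ref{condtn}~iv) type to dominate, which you have not identified. Second, the hypotheses $\alpha\ge\tfrac1k$ and $C\le 0$ do not simply ``provide slack''; in the paper $\alpha>\tfrac1k$ (i.e.\ $\beta>1$) is needed to make the sign of $L_1$ strict (Corollary~\ref{L1=0}) and to control the $|X|^2$-term coefficient, while $\alpha=\tfrac1k$ requires a different cancellation. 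Those two regimes genuinely behave differently, and a one-size-fits-all $Z=G/(u-C_0)^\beta$ is unlikely to see both. You would strengthen the proposal considerably by replacing the ratio by the paper's additive $\tilde W$ and $Z$, and by isolating the Cauchy--Schwarz inequality above as the single lemma you need to verify for $S_k^{\alpha}$.
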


Actually, we consider the following general equation
	\begin{equation}\label{xen+1}
	F+C=\langle X, \nu\rangle,
	\end{equation}
	where $F$ is a homogeneous smooth symmetric function of the principal curvatures 	of degree $\beta$ and $C$ is a constant, which satisfies the following Condition.

    \begin{condtn}\label{condtn}
    Suppose $F$ is a smooth function  defined on the positive cone
     $\Gamma_{+}=\{\mu\in\mathbb{R}^{n}|\mu_{1}>0,\mu_{2}>0,\cdots,\mu_{n}>0\}$ of $\mathbb{R}^n$, and satisfies the following conditions:
    \begin{itemize}
        \item[i)] $F$ is positive and strictly increasing, i.e., $F>0$ and $\frac{\partial F}{\partial \lambda_{i}}>0$ for $1\leq i\leq n$.
         \item[ii)] $F$ is homogeneous symmetric function with degree $\beta$, i.e., $F(t\lambda)=t^{\beta}F(\lambda)$ for all $t\in\mathbb{R_{+}}$.
        \item[iii)] For any $i\neq j$, 
        \begin{equation*}
          \frac{\frac{\partial F}{\partial\lambda_{i}}\lambda_{i}-\frac{\partial F}{\partial\lambda_{j}}\lambda_{j}}{\lambda_i-\lambda_j}\geq 0.
        \end{equation*}
 \item[iv)] For all $(y_{1},...,y_{n})\in\mathbb{R}^{n}$, 
        \begin{align}\label{keyineq}
        \sum_{i}\frac{1}{\lambda_{i}}\frac{\partial\log F}{\partial\lambda_{i}}y_{i}^{2}+\sum_{i,j}\frac{\partial^{2}\log F}{\partial\lambda_{i}\partial\lambda_{j}}y_{i}y_{j}\geq 0.
        \end{align}
    \end{itemize}
    \end{condtn}

   \begin{remark}
       By using Lemma \ref{andrews}, one can see that  iii) and iv)  in Condition \ref{condtn} are equivalent to the convexity of the function $F^{*}(A)=\log F(e^{A})$ defined on real $n\times n$ symmetric matrices.     \end{remark}

\begin{remark} We call the inequality \eqref{keyineq}  the \emph{key inequality} of $F$ in this paper, which plays an important role in our proof. Its $\sigma_k$ version appeared in \cite{GuanMa} first, later in \cite{FangLaiMa}. We will give another proof in Lemma \ref{qfm} for $\sigma_k$.
\end{remark}

\begin{remark}
        Lemma \ref{ki-iii} and Lemma \ref{qfm+} say that both $\sigma_{k}^{\alpha}$ and $S_{k}^{\alpha}$ with $\alpha>0$ satisfy Condition \ref{condtn}.  In fact, any multiplication combination of such functions satisfies Condition \ref{condtn}, such as
  $\sigma_{2}\sigma_{3}$ and so on.
 \end{remark}

For such general $F$, we prove
    \begin{thm}\label{thmg}
        Let $M$ be a closed, strictly convex hypersurface in $\mathbb{R}^{n+1}$ satisfying
        \begin{equation}\label{xen+1}
        F+C=\langle X, \nu\rangle,
        \end{equation}
        with constant $C$. For $\beta>1$ and $C\leq 0$, if $F$ satisfies Condition \ref{condtn}, then $M$ must be a round sphere.
    \end{thm}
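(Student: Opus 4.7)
The plan is to follow the tensor maximum-principle strategy pioneered by Choi-Daskalopoulos \cite{c-d} and Brendle-Choi-Daskalopoulos \cite{b-c-d} for $\sigma_{n}^{\alpha}$, adapted to general $F$ through Condition \ref{condtn}. I would work directly on $M$. Differentiating $F+C=u$ once (with $u=\langle X,\nu\rangle$) gives $F^{pq}\nabla_{k}h_{pq}=h_{k\ell}X^{\ell}$, where $X^{\ell}$ denotes the tangential part of $X$; differentiating twice and invoking the Codazzi equations together with the Simons-type identity in $\mathbb{R}^{n+1}$ produces a transport-elliptic system for the Weingarten tensor of the form
\begin{align*}
F^{pq}\nabla_{p}\nabla_{q}h_{ij}=X^{m}\nabla_{m}h_{ij}-F^{pq,rs}\nabla_{i}h_{pq}\nabla_{j}h_{rs}+\mathcal{E}_{ij}(h,u,\beta,C),
\end{align*}
where $\mathcal{E}_{ij}$ collects the algebraic remainders in $h$, $u$, $\beta$, and $C$ produced by Euler's relation $\sum F_{\lambda_{i}}\lambda_{i}=\beta F$ and the curvature commutators.

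Next I would pick a test tensor $Z$ that vanishes on the round sphere solutions. A straightforward check shows that \eqref{xen+1} forces any sphere solution to be centered at the origin, so a natural candidate in principal-curvature coordinates is
\begin{align*}
Z_{i}{}^{j}=\delta_{i}{}^{j}-\psi(u)\,(h^{-1})_{i}{}^{j},
\end{align*}
with $\psi$ determined so that $Z\equiv 0$ on this sphere solution. Setting $W(x)=\max_{\xi}Z(\xi,\xi)/\abs{\xi}^{2}$, the multiplicity issue for the largest eigenvalue of $Z$ is handled by an Andrews-type smoothing (replacing $\lambda_{\max}$ by a suitable symmetric function of the eigenvalues); Condition iii) is precisely what makes the commutator terms arising from this smoothing non-negative at a maximum, via Lemma \ref{andrews}.

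Finally, I would evaluate the linearized operator $F^{pq}\nabla_{p}\nabla_{q}$ on $W$ at an interior maximum. Using $\nabla_{k}F=h_{k\ell}X^{\ell}$, the first-order corrections assemble into the transport term $X^{m}\nabla_{m}W$; the quadratic-in-$\nabla h$ piece is controlled in sign by the key inequality \eqref{keyineq} of Condition iv); and the zero-order term in $W$ acquires the correct sign exactly because $\beta>1$ and $C\leq 0$---the general-$F$ analog of the sharp exponents $\alpha\geq 1/k$ and $\alpha>1/(n+2)$ appearing in Theorem A and Theorem \ref{nalp}. The maximum principle then gives $W\leq 0$, and the strong maximum principle combined with the equality case of \eqref{keyineq} upgrades this to $W\equiv 0$, so all principal curvatures coincide at every point and $M$ is totally umbilical, hence a round sphere. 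The main obstacle will be the bookkeeping in this last step: choosing $\psi$ so that all gradient and quadratic cross-terms rearrange into the shape demanded by \eqref{keyineq}, with $\beta-1$ and $-C$ supplying the decisive positive sign in the zero-order coefficient. This is the step most sensitive to the precise algebraic form of $\psi$, and it is exactly where Conditions ii)-iv) together with the hypotheses $\beta>1$ and $C\leq 0$ must interact in a tight, cancellation-free manner.
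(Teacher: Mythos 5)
Your outline is aimed in the right direction---a maximum-principle argument on a quantity built from $h^{-1}$ and the support function $u$, with Condition \ref{condtn} iv) supplying the sign of the quadratic-in-$\nabla h$ terms---but two structural issues go beyond bookkeeping. The first concerns the test quantity itself. The paper works with $\tilde{W}=F\,\lambda_{\min}^{-1}-\tfrac{\beta-1}{2\beta}|X|^{2}$ and $Z=F\,\tr b-\tfrac{n(\beta-1)}{2\beta}|X|^{2}$, and the additive $|X|^{2}$ term is not cosmetic: at a critical point of $\tilde{W}$ its gradient combines with $\nabla(F/\varphi)$ to produce $\lambda_1^{-2}h_{11j}=\bigl(\lambda_1^{-1}-\tfrac{\beta-1}{\beta}\lambda_j^{-1}\bigr)\nabla_j\log F$, which is exactly what repackages the quadratic terms into the shape of \eqref{keyineq}; and its $\LF$-image (via $\LF\tfrac{|X|^{2}}{2}=\sum_i\tfrac{\partial F}{\partial h_{ii}}-\beta F(F+C)$) supplies the zero-order piece $J_1=\tfrac{\beta-1}{\beta}\tfrac{\partial F}{\partial\lambda_i}(\tfrac{\lambda_i}{\lambda_1}-1)-C\tfrac{\partial F}{\partial\lambda_i}\lambda_i(\tfrac{\lambda_i}{\lambda_1}-1)$ whose vanishing forces umbility. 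Your ansatz $Z_i{}^j=\delta_i{}^j-\psi(u)(h^{-1})_i{}^j$ is multiplicative in $u$, and since $|X|^{2}=u^{2}+|X^T|^{2}$ involves the full tangential position $X^T=b^{ij}\nabla_j u\,e_i$, no scalar factor $\psi(u)$ can reproduce $\nabla|X|^{2}$ or $\LF|X|^{2}$. The $\psi$-selection you defer is therefore not a matter of fine-tuning: the ansatz lacks the additive piece that makes the computation close.

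The second issue is the passage from a pointwise conclusion to a global one. The paper's argument is two-staged: the Brendle--Choi--Daskalopoulos viscosity lemma (Lemma \ref{bcd}) is applied to a smooth lower barrier $\varphi\leq\lambda_1$ touching at the maximum point $x_0$ of $\tilde{W}$, yielding that $x_0$ is umbilic; then one switches to the smooth scalar $Z=F\,\tr b-\tfrac{n(\beta-1)}{2\beta}|X|^{2}$, which satisfies $Z\leq n\tilde{W}$ with equality at $x_0$ and obeys $\LF Z+R(\nabla Z)\geq 0$ in a neighborhood of $x_0$, so the strong maximum principle forces $Z$ (hence $\tilde{W}$) to be constant. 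Your single-stage plan applies the strong maximum principle directly to the largest eigenvalue of $Z_i{}^j$, but that quantity is only Lipschitz where the eigenvalue has multiplicity, so the strong maximum principle does not apply to it as written; identifying the smooth surrogate $F\,\tr b-\cdots$ and exploiting the inequality $Z\leq n\tilde{W}$ is an essential and non-obvious step, not an afterthought.
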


In our proof, following the idea of Choi-Daskaspoulos \cite{c-d} and Brendle-Choi-Daskaspoulos \cite{b-c-d}, we consider the quantities
\begin{eqnarray}
Z&=&F\, \tr b-\frac{n(\beta-1)}{2\beta}|X|^{2},\\
\tilde{W}&=&F\,\lambda_{\min}^{-1}-\frac{\beta-1}{2 \beta}|X|^2,
\end{eqnarray}
where $b=(b^{ij})$ denotes the inverse of the second fundamental form $h=(h_{ij})$ with respect to an orthonormal frame and $\lambda_{\min}$ is the smallest principal curvature of the hypersurface.
We find that the techniques in Choi-Daskaspoulos \cite{c-d} and Brendle-Choi-Daskaspoulos \cite{b-c-d} can be carried out effectively on $F$ which satisfies Condition \ref{condtn}.
First we apply the maximum principle for $W$ (see Section \ref{Sec:Pogorelov} for definition of $W$) to prove that the maximum point of $\tilde{W}$ is umbilic.  Then we use the strong maximum principle of $\LF=\frac{\partial F}{\partial h_{ij}}\nabla_{i}\nabla_{j}$ for $Z$ to prove Theorem \ref{thmg}.
In particular, Theorem \ref{thmg} holds for $F=\sigma_{k}^{\alpha}$ or $F=S_{k}^{\alpha}$ with $\alpha>\frac{1}{k}$.
In Theorem \ref{sigmak2+} and Theorem \ref{sk2+}, we discuss the cases $F=\sigma_{k}^\alpha$ with $\frac{1}{k}\leq \alpha \leq \frac{1}{2}$ and  $F=S_{k}^{\frac{1}{k}}$, respectively.

The structure of this paper is as follows. In Section \ref{Sec:Prop}, we give some properties of the elementary symmetric functions $\sigma_{k}$ and general $F$ satisfying Condition \ref{condtn}  and prove that both $\sigma_k^{\alpha}$ and $S_k^{\alpha}$ satisfy the key inequality (Lemma \ref{qfm+}). In Section \ref{Sec:Prelim}, we derive some fundamental formulas for the closed hypersurfaces which satisfies self-similar equation \eqref{xen+1} with the general homogeneous symmetric function $F$. In Section \ref{Sec:Pogorelov}, we do analysis at the maximum point of $W$. In Section \ref{Sec:thmg} we give a proof of Theorem \ref{thmg}. Finally in Section \ref{Sec:ThmAB},
we present the proofs of Theorem A and Theorem B.

\begin{acknow}
		The authors would like to thank Professor Xinan Ma for his nice lectures on $\sigma_k$-problems delivered in Tsinghua University in January 2016. They also would like to thank Professor S.-T. Yau for his constant encouragement.  
\end{acknow}

\section{some properties of elementary symmetric functions and the key inequality}	
\label{Sec:Prop}	

We first collect some basic notations, definitions and properties of elementary symmetric functions, which are needed in our investigation of $\sigma_k^{\alpha}$ self-similar solutions and general $F$ self-similar solutions.

Let $\lambda=(\lambda_1,\cdots, \lambda_n)$ denote the principal curvatures of $M$.
Throughout this paper, we assume that $\lambda_1\leq \lambda_2\leq \cdots \leq \lambda_n$.
	 Denote
	\begin{equation*}
	\sigma_k(\lambda)=\sigma_{k}(\lambda(A))=
	\sum_{1\leq i_{1}< i_{2}\cdots< i_{k}\leq n}
	\lambda_{i_{1}}\lambda_{i_{2}}\cdots\lambda_{i_{k}}.
	\end{equation*}
For convenience, we set $\sigma_{0}(\lambda)=1$ and $\sigma_{k}(\lambda)=0$ for $k>n$ or $k<0$.
Let $\sigma_{k;i}(\lambda)$ denote the symmetric function $\sigma_k(\lambda)$ with $\lambda_i=0$ and $\sigma_{k;ij}(\lambda)$,  with $i\neq j$, denote the symmetric function $\sigma_k(\lambda)$ with $\lambda_i=\lambda_j=0$.
So $\frac{\partial \sigma_{k}(\lambda)}{\partial \lambda_{i}}=\sigma_{k-1;i}$, $\frac{\partial^{2} \sigma_{k}(\lambda)}{\partial \lambda_{i}\partial \lambda_{j}}=\sigma_{k-2;ij}$.
Remark that  without causing ambiguity we omit $\lambda$ in the notations of $\sigma_k(\lambda)$ for simplicity.

\begin{defi}	
A hypersurface $M$ is said to be \emph{strictly convex} if $\lambda\in \Gamma_+=\{\mu\in\mathbb{R}^{n}|\mu_{1}>0,\mu_{2}>0,\cdots,\mu_{n}>0\}$ for any point in $M$.
\end{defi}

The following basic properties related to $\sigma_k$  will be used directly.

	\begin{prop}[See, for example, \cite{lin-tru}]\label{sigprop}
		For $0\leq k\leq n$ and $1\leq i\leq n$, the following equalities hold:
		\begin{align*}
		\sigma_{k+1}&=\sigma_{k+1;i}+\lambda_{i}\sigma_{k;i},\\
		\sum_{i=1}^{n}\lambda_{i}\sigma_{k;i}&=(k+1)\sigma_{k+1},\\
		\sum_{i=1}^{n}\sigma_{k;i}&=(n-k)\sigma_{k},\\
		\sum_{i=1}^{n}\lambda_{i}^{2}\sigma_{k;i}&=\sigma_{1}\sigma_{k+1}-(k+2)\sigma_{k+2}.
		\end{align*}
	\end{prop}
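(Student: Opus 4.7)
The plan is to prove each of the four identities by direct combinatorial manipulation of the defining sum of $\sigma_k$, working in the order they are stated so that later identities can be deduced from earlier ones. I would keep in mind throughout the conventions $\sigma_0=1$ and $\sigma_k=0$ for $k<0$ or $k>n$, which handle the boundary cases automatically.

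For identity (1), I would split the index sets $\{i_1<\cdots<i_{k+1}\}\subset\{1,\dots,n\}$ appearing in the definition of $\sigma_{k+1}$ into those that avoid $i$ and those that contain $i$. Subsets of the first kind sum to $\sigma_{k+1;i}$ by definition, while subsets of the second kind factor as $\lambda_i$ times the sum over $k$-subsets of $\{1,\dots,n\}\setminus\{i\}$, which is $\lambda_i\sigma_{k;i}$. Adding the two pieces gives the claim.

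For identity (2), summing over $i$ the expression $\lambda_i\sigma_{k;i}$, which is a sum of $(k+1)$-fold products of distinct $\lambda$'s containing the factor $\lambda_i$, counts each monomial $\lambda_{i_1}\cdots\lambda_{i_{k+1}}$ in $\sigma_{k+1}$ exactly once for each of its $k+1$ factors. Hence the total equals $(k+1)\sigma_{k+1}$. Identity (3) is the analogous double-counting statement at one level lower: $\sigma_{k;i}$ is a sum of $k$-fold products whose index set omits $i$, so summing over $i$ counts every $k$-fold product appearing in $\sigma_k$ once for each of the $n-k$ indices not in its support, yielding $(n-k)\sigma_k$.

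Identity (4) then follows purely algebraically from (1) and (2). Multiplying (1) by $\lambda_i$ and summing over $i$ produces
$$\sigma_1\,\sigma_{k+1}=\sum_{i=1}^n\lambda_i\sigma_{k+1;i}+\sum_{i=1}^n\lambda_i^2\sigma_{k;i},$$
and applying (2) with $k$ replaced by $k+1$ to the first sum on the right gives $(k+2)\sigma_{k+2}$. Rearranging yields $\sum_i\lambda_i^2\sigma_{k;i}=\sigma_1\sigma_{k+1}-(k+2)\sigma_{k+2}$. There is no real obstacle here; these identities are folklore, and the only care needed is to check that the boundary conventions on $\sigma_k$ make the identities hold uniformly for all $0\le k\le n$.
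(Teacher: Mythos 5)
Your proof is correct. The paper itself does not prove this proposition—it simply cites Lin--Trudinger—so there is no in-paper argument to compare against; your direct combinatorial derivation (splitting subsets by whether they contain $i$ for the first identity, double-counting for the second and third, and deducing the fourth algebraically from the first two together with the shift $k\mapsto k+1$) is the standard elementary proof, and the boundary conventions $\sigma_0=1$, $\sigma_k=0$ for $k>n$ or $k<0$ indeed make every identity hold uniformly on $0\le k\le n$ as you note.
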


	We now turn to prove the key inequality for $\sigma_k$.
	First we show two lemmas. Let $D_{m}^{(k)}(\lambda)=(d_{ij})$, $i,j=0,\cdots, m$, denote the following symmetric $(m+1)\times(m+1)$-matrix
	$$
	\begin{pmatrix}
	\sigma_{k} & \sigma_{k;1} & \sigma_{k;2} & \cdots & \sigma_{k;m}\\
	\sigma_{k;1} & \sigma_{k;1} & \sigma_{k;12} & \cdots &
	\sigma_{k;1m}\\
	\sigma_{k;2} & \sigma_{k;21} & \sigma_{k;2} &\cdots &
	\sigma_{k;2m}\\
	\vdots & \vdots & \vdots & \ddots & \vdots \\
	\sigma_{k;m} & \sigma_{k;m1} & \sigma_{k;m2} & \cdots & \sigma_{k;m}
	\end{pmatrix},
	$$
	i.e., $d_{ij}=d_{ji}$ and
	\begin{equation*}
		d_{ij}=\left\{
		\begin{aligned}
		&\sigma_{k}(\lambda), &\text{if }i=j=0,\\
		&\sigma_{k;j}(\lambda), &\text{if }i=0,~1\leq j\leq m,\\
		&\sigma_{k;i}(\lambda), &\text{if }1\leq i=j\leq m,\\
		&\sigma_{k;ij}(\lambda), &\text{if }1\leq i<j\leq m.
		\end{aligned}\right.
	\end{equation*}
	
	\begin{lem}\label{Dmk}
		If $\lambda\in\Gamma_{+}$ and $n\geq 2$, then $D_{n}^{(k)}(\lambda)$ is semi-positive definite for $1\leq k\leq n$.
	\end{lem}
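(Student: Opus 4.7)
The plan is to prove the lemma by writing $v^{T} D_{n}^{(k)}(\lambda)\, v$ explicitly as a manifestly non-negative combination of squares, bypassing both induction and spectral estimates. The mechanism is the shared combinatorial structure of the entries: $\sigma_{k}$, $\sigma_{k;i}$, and $\sigma_{k;ij}$ are all sums of monomials $\prod_{l \in I}\lambda_{l}$ over $k$-subsets $I \subset \{1,\dots,n\}$, differing only by which indices are forbidden from $I$. Reorganising the quadratic form so that the outer sum runs over such subsets will collapse everything, for each fixed $I$, into a single perfect square.

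Concretely, for $v = (v_{0}, v_{1}, \dots, v_{n}) \in \mathbb{R}^{n+1}$, I would start from
\[
v^{T} D_{n}^{(k)} v \;=\; v_{0}^{2}\sigma_{k} + 2 v_{0}\sum_{i} v_{i}\, \sigma_{k;i} + \sum_{i} v_{i}^{2}\, \sigma_{k;i} + \sum_{i \neq j} v_{i} v_{j}\, \sigma_{k;ij},
\]
substitute the combinatorial expansions $\sigma_{k} = \sum_{|I|=k}\prod_{l\in I}\lambda_{l}$, $\sigma_{k;i} = \sum_{|I|=k,\ i \notin I}\prod_{l\in I}\lambda_{l}$, and $\sigma_{k;ij} = \sum_{|I|=k,\ i,j \notin I}\prod_{l\in I}\lambda_{l}$, and then interchange summations. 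For each fixed $I$ with $|I|=k$, the coefficient of $\prod_{l \in I}\lambda_{l}$ collects $v_{0}^{2}$ from the first term, $2 v_{0} v_{i}$ for each $i \notin I$ from the second, $v_{i}^{2}$ for each $i \notin I$ from the third, and $v_{i} v_{j}$ for each ordered pair of distinct $i,j \notin I$ from the fourth; these four pieces assemble precisely into $\bigl(v_{0} + \sum_{i \notin I} v_{i}\bigr)^{2}$, giving
\[
v^{T} D_{n}^{(k)}(\lambda)\, v \;=\; \sum_{|I|=k}\Bigl(v_{0} + \sum_{i \notin I} v_{i}\Bigr)^{2} \prod_{l \in I}\lambda_{l}.
\]

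The only subtle point, more a bookkeeping observation than a genuine obstacle, is that the diagonal entries $d_{ii} = \sigma_{k;i}$ (rather than $\sigma_{k;ii}$, which would vanish) are exactly what is needed to absorb the $i=j$ terms into the unrestricted inner sum over $\{i \notin I\}$ and thereby complete the square; once this is recognised, the identity above is a one-line rearrangement. Strict positivity of the $\lambda_{l}$ on $\Gamma_{+}$ enters only at the final step, to conclude that each summand is non-negative and hence that $D_{n}^{(k)}$ is positive semi-definite. The sum-of-squares representation is in fact a rank-one decomposition and should be convenient when deriving the key inequality for $\sigma_{k}^{\alpha}$ later in the paper.
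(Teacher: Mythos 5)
Your proof is correct and takes a genuinely different route from the paper. The paper proceeds by induction on $n$: it uses the recursions $\sigma_k = \sigma_{k;n} + \lambda_n\sigma_{k-1;n}$ and $\sigma_{k;i} = \sigma_{k;in} + \lambda_n\sigma_{k-1;in}$ to split $D_n^{(k)}$ into $\lambda_n$ times a bordered copy of $D_{n-1;n}^{(k-1)}$ plus a matrix that, after subtracting the first row and column from the last, is seen to be congruent to a bordered copy of $D_{n-1;n}^{(k)}$; the inductive hypothesis then finishes the argument. Your route bypasses induction entirely: expanding each entry as a sum over $k$-subsets $I$ and interchanging the order of summation, the quadratic form collapses, for each fixed $I$, into the perfect square $\bigl(v_0 + \sum_{i\notin I} v_i\bigr)^2$ weighted by $\prod_{l\in I}\lambda_l > 0$. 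I verified the coefficient bookkeeping: the diagonal entries $\sigma_{k;i}$ supply exactly the $\sum_{i\notin I} v_i^2$ needed to pair with the ordered-pair cross terms $\sum_{i\neq j,\, i,j\notin I} v_iv_j$ from the $\sigma_{k;ij}$ entries, so the square closes cleanly. Your version is shorter, makes the semi-definiteness manifest as an explicit rank-one (Gram) decomposition of $D_n^{(k)}$, and has the bonus of immediately identifying the kernel (vectors with $v_0 + \sum_{i\notin I} v_i = 0$ for all $k$-subsets $I$); the paper's inductive congruence argument buys nothing extra here and is strictly more work, so this is a genuine improvement in exposition.
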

	
	\begin{proof}
		First, since $\sigma_{n;i}=\sigma_{n;pq}=0$ for $1\leq i,p,q\leq n$, it is clear that $D_{n}^{(n)}$ is semi-positive definite.
		
		For $1\leq k\leq n-1$, the statement follows by induction on $n$. In fact, for $n=2$, the semi-positive-definiteness is proved by directly computation. Now, assume that the statement is true for $n-1$. For $\lambda=(\lambda_{1},...,\lambda_{n})$, the assumption implies the following matrices are semi-positive definite
		$$
		D_{n-1;n}^{(k)}(\lambda)=
		\begin{pmatrix}
		\sigma_{k;n} & \sigma_{k;1n} & \sigma_{k;2n} & \cdots & \sigma_{k;n-1,n}\\
		\sigma_{k;1n} & \sigma_{k;1n} & \sigma_{k;12} & \cdots &
		\sigma_{k;1,n-1,n}\\
		\sigma_{k;2n} & \sigma_{k;21n} & \sigma_{k;2n} &\cdots &
		\sigma_{k;2,n-1,n}\\
		\vdots & \vdots & \vdots & \ddots & \vdots \\
		\sigma_{k;n-1,n} & \sigma_{k;n-1,1n} & \sigma_{k;n-1,2n} & \cdots & \sigma_{k;n-1,n}
		\end{pmatrix}
		$$
		for $1\leq k\leq n-1$.  And, using
		$$\sigma_{k}=\sigma_{k;n}+\lambda_{n}\sigma_{k-1;n},
		\quad
		\sigma_{k,i}=\sigma_{k;in}+\lambda_n \sigma_{k-1;in} \, \, (1\leq i\leq n-1),$$
		we obtain
		$$D_{n}^{(k)}(\lambda)=\lambda_{n}
		\begin{pmatrix}
		D_{n-1;n}^{(k-1)} & 0\\
		0 & 0
		\end{pmatrix}+
		\begin{pmatrix}
		D_{n-1;n}^{(k)} & \eta\\
		\eta^{T} & \sigma_{k;n}
		\end{pmatrix},$$
		where $\eta^{T}=(\sigma_{k;n},\sigma_{k;1n},\sigma_{k;2n},\cdots,\sigma_{k;n-1,n})$. For
		\begin{equation*}
		\begin{pmatrix}
		D_{n-1;n}^{(k)} & \eta\\
		\eta^{T} & \sigma_{k;n}
		\end{pmatrix}=
		\begin{pmatrix}
		\sigma_{k;n} & \sigma_{k;1n} & \sigma_{k;2n} & \cdots & \sigma_{k;n-1,n} & \sigma_{k;n}\\
		\sigma_{k;1n} & \sigma_{k;1n} & \sigma_{k;12n} & \cdots &
		\sigma_{k;1,n-1,n},& \sigma_{k;1n}\\
		\sigma_{k;2n} & \sigma_{k;21n} & \sigma_{k;2n} &\cdots &
		\sigma_{k;2,n-1,n} & \sigma_{k;2n}\\
		\vdots & \vdots & \vdots & \ddots & \vdots & \vdots\\
		\sigma_{k;n-1,n} & \sigma_{k;n-1,1,n} & \sigma_{k;n-1,2,n} & \cdots & \sigma_{k;n-1,n} & \sigma_{k;n-1,n}\\
		\sigma_{k;n} & \sigma_{k;n1} & \sigma_{k;n2} & \cdots & \sigma_{k;n,n-1} &\sigma_{k;n}
		\end{pmatrix},
		\end{equation*}
by subtracting the first row from the last row and the first column from the last column, we find that
 $\begin{pmatrix}
		D_{n-1;n}^{(k)} & \eta\\
		\eta^{T} & \sigma_{k;n}
		\end{pmatrix}$
		is congruent to $\begin{pmatrix}
		D_{n-1;n}^{(k)} & 0\\
		0 & 0
		\end{pmatrix}$ which is semi-positive definite. So $D_{n}^{(k)}(\lambda)$ is semi-positive definite. Thus, the proof is completed.
	\end{proof}

	For $\lambda=(\lambda_{1},...,\lambda_{n})\in\Gamma_{+}$, let $A^{(k)}(\lambda)=(a_{ij})_{n\times n}$ denote the following matrix
	\begin{equation*}
		\begin{pmatrix}
			\frac{1}{\lambda_{1}}\sigma_{k-1;1} & \sigma_{k-2;12} & \sigma_{k-2;13} & \cdots & \sigma_{k-2;1n}\\
			\sigma_{k-2;21} & \frac{1}{\lambda_{2}}\sigma_{k-1;2} & \sigma_{k-2;23} & \cdots & \sigma_{k-2;2n}\\
			\sigma_{k-2;31} & \sigma_{k-2;32} & \frac{1}{\lambda_{3}}\sigma_{k-1;3} & \cdots & \sigma_{k-2;3n}\\
			\vdots & \vdots & \vdots & \ddots & \vdots\\
			\sigma_{k-2;n1} & \sigma_{k-2;n2} & \sigma_{k-2;n3} & \cdots & \frac{1}{\lambda_{n}}\sigma_{k-1;n}
		\end{pmatrix},
	\end{equation*}
	i.e.,
	\begin{equation*}
	a_{ij}=\left\{
	\begin{aligned}
	&\frac{1}{\lambda_{i}}\sigma_{k-1;i}(\lambda), &\text{for}~ i=j,\\
	&\sigma_{k-2;ij} (\lambda), &\text{for}~ i\neq j.
	\end{aligned}\right.
	\end{equation*}

	\begin{lem}\label{ak}
		Let $\xi^{T}=(\sigma_{k-1;1},\sigma_{k-1;2},...,\sigma_{k-1;n})$. Then the matrix $\sigma_{k}A^{(k)}-\xi\xi^{T}$ is semi-positive definite.
	\end{lem}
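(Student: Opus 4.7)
The plan is to prove $y^{T}(\sigma_{k}A^{(k)}-\xi\xi^{T})y\ge 0$ for all $y\in\mathbb{R}^{n}$ by a direct combinatorial application of the Cauchy--Schwarz inequality, summing over the $k$-element subsets of $\{1,\dots,n\}$. Here $\lambda\in\Gamma_{+}$ is used in an essential way because the weights in the Cauchy--Schwarz step need to be non-negative. Unlike the earlier Lemma \ref{Dmk}, I would not proceed by induction on $n$; instead I would unfold $\sigma_{k}$, $\sigma_{k-1;i}$ and $\sigma_{k-2;ij}$ as explicit sums of monomials $\lambda^{I}=\prod_{\ell\in I}\lambda_{\ell}$ and reindex.

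First, I would rewrite the linear form $\xi^{T}y=\sum_{i}\sigma_{k-1;i}y_{i}$ in a symmetric combinatorial form. Since a pair $(i,J)$ with $|J|=k-1$, $i\notin J$ is in bijection with a pair $(I,i)$ where $I=J\cup\{i\}$ has size $k$ and $i\in I$, one obtains
\begin{equation*}
\xi^{T}y \;=\; \sum_{i=1}^{n}\sum_{\substack{|J|=k-1\\ i\notin J}}\lambda^{J}y_{i}
\;=\; \sum_{|I|=k}\lambda^{I}\sum_{i\in I}\frac{y_{i}}{\lambda_{i}}.
\end{equation*}

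Second, I would apply the Cauchy--Schwarz inequality with the non-negative weights $\lambda^{I}$:
\begin{equation*}
(\xi^{T}y)^{2}\;\le\;\Bigl(\sum_{|I|=k}\lambda^{I}\Bigr)\Bigl(\sum_{|I|=k}\lambda^{I}\Bigl(\sum_{i\in I}\frac{y_{i}}{\lambda_{i}}\Bigr)^{2}\Bigr)
\;=\;\sigma_{k}\cdot\sum_{|I|=k}\lambda^{I}\Bigl(\sum_{i\in I}\frac{y_{i}}{\lambda_{i}}\Bigr)^{2}.
\end{equation*}

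Third, I would expand the squared sum and swap the order of summation, separating the diagonal terms $i=j$ from the off-diagonal ones. For the diagonal, $\sum_{|I|=k,\,i\in I}\lambda^{I}/\lambda_{i}^{2}=\sigma_{k-1;i}/\lambda_{i}$, and for $i\neq j$, $\sum_{|I|=k,\,i,j\in I}\lambda^{I}/(\lambda_{i}\lambda_{j})=\sigma_{k-2;ij}$. This gives
\begin{equation*}
\sum_{|I|=k}\lambda^{I}\Bigl(\sum_{i\in I}\frac{y_{i}}{\lambda_{i}}\Bigr)^{2}
\;=\;\sum_{i}\frac{\sigma_{k-1;i}}{\lambda_{i}}y_{i}^{2}+\sum_{i\neq j}\sigma_{k-2;ij}\,y_{i}y_{j}
\;=\;y^{T}A^{(k)}y.
\end{equation*}
Combining, $y^{T}\xi\xi^{T}y=(\xi^{T}y)^{2}\le\sigma_{k}\,y^{T}A^{(k)}y$, which is the asserted semi-positive definiteness.

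There is no serious obstacle: the argument is essentially a single Cauchy--Schwarz step, once one has recognised the right combinatorial expansion of $\xi^{T}y$. The only thing to be careful about is bookkeeping in the third step, to make sure the cross-terms with $i=j$ inside the square are gathered into the $\sigma_{k-1;i}/\lambda_{i}$ diagonal of $A^{(k)}$ (rather than into a spurious $\sigma_{k-2;ii}$ term, which would not make sense).
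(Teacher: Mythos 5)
Your proof is correct, and it takes a genuinely different route from the paper. The paper first proves (Lemma \ref{Dmk}) by induction on $n$ that the augmented $(n+1)\times(n+1)$ matrix $D_n^{(k)}$ is semi-positive definite, then passes from $\sigma_k A^{(k)}-\xi\xi^T$ by a congruence transformation to the matrix $\tilde A^{(k)}$ and shows each principal minor equals $\sigma_k^{m-1}\det D_m^{(k)}\geq 0$. Your argument bypasses both the induction and the determinant bookkeeping: after rewriting $\xi^T y=\sum_{|I|=k}\lambda^I\sum_{i\in I}y_i/\lambda_i$ by the bijection $(i,J)\leftrightarrow(I,i)$ with $I=J\cup\{i\}$, a single Cauchy--Schwarz with the positive weights $\lambda^I$ gives $(\xi^T y)^2\leq\sigma_k\cdot y^T A^{(k)}y$ directly, once one checks that $\sum_{|I|=k,\,i\in I}\lambda^I/\lambda_i^2=\sigma_{k-1;i}/\lambda_i$ and $\sum_{|I|=k,\,i,j\in I}\lambda^I/(\lambda_i\lambda_j)=\sigma_{k-2;ij}$ for $i\neq j$ --- both of which are straightforward. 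This is self-contained, shorter, and makes the mechanism transparent: the only structural input is $\lambda\in\Gamma_+$ so that the monomial weights are positive. If the paper used your argument, Lemma \ref{Dmk} could be dropped entirely (it is invoked nowhere else). Note also that your proof is essentially a direct proof of the ``key inequality'' Lemma \ref{qfm}, which is precisely Lemma \ref{ak} divided through by $\sigma_k^2$; the paper's remark attributes that inequality to Guan--Ma, and your weighted Cauchy--Schwarz is the natural way to see it.
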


	\begin{proof}
		Denote $\sigma_{k}A^{(k)}-\xi\xi^{T}=(w_{ij})_{n\times n}$. Thus
		\begin{equation*}
		w_{ij}=\left\{
		\begin{aligned}
		&\frac{\sigma_{k-1;i}}{\lambda_{i}}\sigma_{k;i}, &\text{for}~ i=j,\\
		&\frac{1}{\lambda_{i}\lambda_{j}}(\sigma_k\sigma_{k;ij}-\sigma_{k;i}\sigma_{k;j}), &\text{for}~ i\neq j.
		\end{aligned}\right.
		\end{equation*}

We divide the proof in three steps.
		\item Step 1. Since the semi-positive-definiteness is preserved under congruent transformation, we multiply $\lambda_{i}$ to the $i$-th row and the $i$-th column of $\sigma_{k}A^{(k)}-\xi\xi^{T}$ for $1\leq i\leq n$. And, let $\tilde{A}^{(k)}=(\tilde{a}_{ij})_{n\times n}$ denote the new matrix which is defined by
		\begin{equation*}
		\tilde{a}_{ij}=\left\{
		\begin{aligned}
		&\sigma_{k;i}(\sigma_{k}-\sigma_{k;i}), &\text{for}~ i=j,\\
		&\sigma_{k}\sigma_{k;ij}-\sigma_{k;i}\sigma_{k;j}, &\text{for}~ i\neq j.
		\end{aligned}\right.
		\end{equation*}
We will discuss $\tilde{A}^{(k)}$ instead of $\sigma_{k}A^{(k)}-\xi\xi^{T}$ in the following.
		
		\item Step 2. $\tilde{A}^{(k)}$ is semi-positive definite if and only if its principal minors are all non-negative. Let $\tilde{A}_{m}^{(k)}$ denote the upper-left $m\times m$ sub-matrix of $\tilde{A}^{(k)}$. For the symmetry of the elemental functions, it suffices to show $\det\tilde{A}_{m}^{(k)}\geq 0$.
		
		\item Step 3. $\det\tilde{A}_{m}^{(k)}$ can be calculated as follows.
		\begin{align*}
			\det\tilde{A}_{m}^{(k)}&=\det
			\begin{pmatrix}
			1 & \sigma_{k;1} & \sigma_{k;2} &\cdots & \sigma_{k;m}\\
			0 & \sigma_{k}\sigma_{k;1}-\sigma_{k;1}^{2} & \sigma_{k}\sigma_{k;12}-\sigma_{k;1}\sigma_{k;2} & \cdots & \sigma_{k}\sigma_{k;1m}-\sigma_{k;1}\sigma_{k;m}\\
			0 & \sigma_{k}\sigma_{k;12}-\sigma_{k;1}\sigma_{k;2} & \sigma_{k}\sigma_{k;2}-\sigma_{k;2}^{2} & \cdots & \sigma_{k}\sigma_{k;2m}-\sigma_{k;2}\sigma_{k;m}\\
			\vdots & \vdots & \vdots & \ddots & \vdots\\
			0 & \sigma_{k}\sigma_{k;m1}-\sigma_{k;m}\sigma_{k;1} & \sigma_{k}\sigma_{k;m2}-\sigma_{k;m}\sigma_{k;2} & \cdots & \sigma_{k}\sigma_{k;m}-\sigma_{k;m}^{2}\\
			\end{pmatrix}\\
			&=\det
			\begin{pmatrix}
			1 & \sigma_{k;1} & \sigma_{k;2} &\cdots & \sigma_{k;m}\\
			\sigma_{k;1} & \sigma_{k}\sigma_{k;1} & \sigma_{k}\sigma_{k;12} & \cdots & \sigma_{k}\sigma_{k;1m}\\
			\sigma_{k;2} & \sigma_{k}\sigma_{k;12} & \sigma_{k}\sigma_{k;2} & \cdots & \sigma_{k}\sigma_{k;2m}\\
			\vdots & \vdots & \vdots & \ddots & \vdots\\
			\sigma_{k;m} & \sigma_{k}\sigma_{k;m1} & \sigma_{k}\sigma_{k;m2} & \cdots & \sigma_{k}\sigma_{k;m}\\
			\end{pmatrix}\\
            &=\sigma_{k}^{-2}\det
			\begin{pmatrix}
			\sigma_{k}^{2} & \sigma_{k}\sigma_{k;1} & \sigma_{k}\sigma_{k;2} &\cdots & \sigma_{k}\sigma_{k;m}\\
			\sigma_{k}\sigma_{k;1} & \sigma_{k}\sigma_{k;1} & \sigma_{k}\sigma_{k;12} & \cdots & \sigma_{k}\sigma_{k;1m}\\
			\sigma_{k}\sigma_{k;2} & \sigma_{k}\sigma_{k;12} & \sigma_{k}\sigma_{k;2} & \cdots & \sigma_{k}\sigma_{k;2m}\\
			\vdots & \vdots & \vdots & \ddots & \vdots\\
			\sigma_{k}\sigma_{k;m} & \sigma_{k}\sigma_{k;m1} & \sigma_{k}\sigma_{k;m2} & \cdots & \sigma_{k}\sigma_{k;m}\\
			\end{pmatrix}\\
			&=\sigma_{k}^{m-1}\det D_{m}^{(k)}.
		\end{align*}

		By Lemma \ref{Dmk}, we know $\det D_{m}^{(k)}\geq 0$. So, $\det\tilde{A}_{m}^{(k)}\geq 0$ which implies $\sigma_{k}A^{(k)}-\xi\xi^{T}$ is semi-positive definite.
	\end{proof}
	
	With the help of the proceeding two lemmas, we finally obtain the key inequality for $\sigma_k$.
	It appeared in \cite{GuanMa} first, later in \cite{FangLaiMa}. Here we give another proof.
	\begin{lem}\label{qfm}
		For $y=(y_{1},y_{2},...,y_{n})\in \mathbb{R}^{n}$, the following inequality holds
		\begin{equation*}
		\sum_{i=1}^{n}\frac{\sigma_{k-1;i}}{\lambda_{i}\sigma_{k}}y_{i}^{2}+\sum_{i\neq j}\frac{\sigma_{k-2;ij}}{\sigma_{k}}y_{i}y_{j}\geq (\sum_{i=1}^{n}\frac{\sigma_{k-1;i}}{\sigma_{k}}y_{i})^{2}.
		\end{equation*}
	\end{lem}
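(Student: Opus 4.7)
The plan is to reduce Lemma \ref{qfm} directly to Lemma \ref{ak} by clearing denominators and recognizing the two sides as explicit quadratic forms. Since $\lambda\in\Gamma_+$ we have $\sigma_k>0$, so multiplying the desired inequality through by $\sigma_k^2$ produces the equivalent statement
$$
\sigma_k\sum_{i=1}^{n}\frac{\sigma_{k-1;i}}{\lambda_i}y_i^2+\sigma_k\sum_{i\neq j}\sigma_{k-2;ij}\,y_iy_j\;\geq\;\Bigl(\sum_{i=1}^n\sigma_{k-1;i}y_i\Bigr)^2.
$$

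Next I would match both sides to the matrices introduced just before Lemma \ref{ak}. Reading off the diagonal entries $\frac{1}{\lambda_i}\sigma_{k-1;i}$ and off-diagonal entries $\sigma_{k-2;ij}$ of $A^{(k)}$, one sees immediately that the left-hand side equals $y^{T}\bigl(\sigma_k A^{(k)}\bigr)y$. On the other hand, with $\xi^{T}=(\sigma_{k-1;1},\ldots,\sigma_{k-1;n})$, the right-hand side is $(\xi^{T}y)^{2}=y^{T}(\xi\xi^{T})y$. Hence the desired inequality is equivalent to
$$
y^{T}\bigl(\sigma_k A^{(k)}-\xi\xi^{T}\bigr)y\;\geq\;0 \qquad \text{for every } y\in\mathbb{R}^{n},
$$
which is precisely the semi-positive-definiteness assertion of Lemma \ref{ak}.

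The main obstacle in the whole chain was already handled in the proofs of Lemmas \ref{Dmk} and \ref{ak}, namely verifying that the auxiliary matrix $D_n^{(k)}$ is semi-positive definite (by induction on $n$) and that every leading principal minor $\det\tilde A_m^{(k)}$ reduces, after the explicit row/column manipulation carried out in Step 3 of the proof of Lemma \ref{ak}, to $\sigma_k^{m-1}\det D_m^{(k)}$. Once those are in hand, the present lemma requires no further combinatorial input: it is simply the statement that a semi-positive definite matrix defines a non-negative quadratic form, applied to the vector $y$.
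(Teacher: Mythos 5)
Your proposal is correct and follows exactly the same route as the paper: clear denominators, identify the quadratic form as $y^{T}(\sigma_k A^{(k)}-\xi\xi^{T})y$, and invoke the semi-positive-definiteness established in Lemma \ref{ak}. The paper's one-line proof records precisely this observation, so you have simply spelled out the same argument in fuller detail.
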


	\begin{proof}
		By Lemma \ref{ak}, we know
		\begin{equation*}
			y^{T}(\frac{1}{\sigma_{k}}A^{(k)}-\frac{1}{\sigma_{k}^{2}}\xi\xi^{T})y\geq 0.
		\end{equation*}
	\end{proof}

    Now we can show that both $\sigma_{k}^{\alpha}$ and $S_{k}^{\alpha}$ with $\alpha >0$ satisfy Condition \ref{condtn}.

    \begin{lem}\label{ki-iii}
        For $i>j$, for $F=\sigma_{k}^{\alpha}$ or $F=S_{k}^{\alpha}$ with $\alpha>0$, Condition \ref{condtn} iii) holds, i.e.,
        \begin{equation*}
        \frac{\partial F}{\partial\lambda_{i}}\lambda_{i}\geq\frac{\partial F}{\partial\lambda_{j}}\lambda_{j}.
        \end{equation*}
    \end{lem}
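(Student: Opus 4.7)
The plan is to handle the two cases $F=\sigma_k^\alpha$ and $F=S_k^\alpha$ separately, in both cases using the convention $\lambda_1\leq\lambda_2\leq\cdots\leq\lambda_n$ so that $i>j$ means $\lambda_i\geq\lambda_j$, and reducing the asserted inequality to an elementary monotonicity statement.

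For $F=\sigma_k^\alpha$, differentiating gives $\frac{\partial F}{\partial\lambda_i}=\alpha\sigma_k^{\alpha-1}\sigma_{k-1;i}$. Since $\sigma_k>0$ on $\Gamma_+$ and $\alpha>0$, the desired inequality is equivalent to
\[
\lambda_i\sigma_{k-1;i}\geq\lambda_j\sigma_{k-1;j}.
\]
By the first identity in Proposition \ref{sigprop} this is the same as $\sigma_{k;j}\geq\sigma_{k;i}$. I would then split the terms of $\sigma_{k;j}$ and $\sigma_{k;i}$ according to whether the index $i$ (respectively $j$) appears, obtaining the telescoping identity
\[
\sigma_{k;j}-\sigma_{k;i}=(\lambda_i-\lambda_j)\,\sigma_{k-1;ij},
\]
which is non-negative since $\lambda_i\geq\lambda_j$ and $\sigma_{k-1;ij}\geq0$ on $\Gamma_+$ (every summand in $\sigma_{k-1;ij}$ is a product of positive $\lambda$'s).

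For $F=S_k^\alpha$, the computation is even shorter: $\frac{\partial F}{\partial\lambda_i}=\alpha k S_k^{\alpha-1}\lambda_i^{k-1}$, so $\frac{\partial F}{\partial\lambda_i}\lambda_i=\alpha k S_k^{\alpha-1}\lambda_i^k$, and the inequality reduces to $\lambda_i^k\geq\lambda_j^k$, which is immediate from $\lambda_i\geq\lambda_j>0$ and $k\geq1$. There is no real obstacle in either case; the only point that requires care is the algebraic identity $\sigma_{k;j}-\sigma_{k;i}=(\lambda_i-\lambda_j)\sigma_{k-1;ij}$, which is really just regrouping the subsets of size $k$ according to the presence of $i$ and $j$, and which, together with the positivity of the elementary symmetric functions on $\Gamma_+$, finishes the proof.
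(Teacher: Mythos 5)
Your proof is correct and is essentially the paper's argument: both reduce the claim for $\sigma_k^\alpha$ to the identity $\lambda_i\sigma_{k-1;i}-\lambda_j\sigma_{k-1;j}=(\lambda_i-\lambda_j)\sigma_{k-1;ij}$ (your $\sigma_{k;j}-\sigma_{k;i}=(\lambda_i-\lambda_j)\sigma_{k-1;ij}$ is the same statement after the substitution $\sigma_{k;i}=\sigma_k-\lambda_i\sigma_{k-1;i}$), and both treat $S_k^\alpha$ as immediate. The minor detour through $\sigma_{k;j}\geq\sigma_{k;i}$ does not change the substance.
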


    \begin{proof}
        For $F=S_{k}^{\alpha}$, it is clear. For $F=\sigma_{k}^{\alpha}$,
        we have
        \begin{equation*}
        \frac{\partial F}{\partial\lambda_{i}}\lambda_{i}-\frac{\partial F}{\partial\lambda_{j}}\lambda_{j}
        =\alpha \sigma_k^{\alpha-1}(\sigma_{k-1;i}\lambda_{i}-\sigma_{k-1;j}\lambda_{j})
        =\alpha \sigma_k^{\alpha-1}\sigma_{k-1;ij}(\lambda_{i}-\lambda_{j})\geq 0.
        \end{equation*}
    \end{proof}

    \begin{lem}\label{qfm+}
        For all $(y_{1},y_{2},...,y_{n})\in\mathbb{R}^{n}$, $F=\sigma_{k}^{\alpha}$ or $F=S_{k}^{\alpha}$ with $\alpha>0$ satisfies Condition \ref{condtn} iv), i.e.,
        \begin{align*}
        \sum_{i}\frac{1}{\lambda_{i}}\frac{\partial\log F}{\partial\lambda_{i}}y_{i}^{2}+\sum_{i,j}\frac{\partial^{2}\log F}{\partial\lambda_{i}\partial\lambda_{j}}y_{i}y_{j}\geq 0.
        \end{align*}
    \end{lem}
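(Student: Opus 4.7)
The plan is to reduce the claim to the unpowered cases $F=\sigma_{k}$ and $F=S_{k}$ and then handle those two separately.

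First I would observe that if $F=G^{\alpha}$, then $\log F=\alpha\log G$, so $\frac{\partial\log F}{\partial\lambda_{i}}=\alpha\frac{\partial\log G}{\partial\lambda_{i}}$ and $\frac{\partial^{2}\log F}{\partial\lambda_{i}\partial\lambda_{j}}=\alpha\frac{\partial^{2}\log G}{\partial\lambda_{i}\partial\lambda_{j}}$. Since $\alpha>0$, the entire left hand side of the key inequality for $F$ equals $\alpha$ times the corresponding expression for $G$, so it suffices to prove the inequality for $G=\sigma_{k}$ and for $G=S_{k}$.

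For $G=\sigma_{k}$, I would just compute the derivatives: $\frac{\partial\log\sigma_{k}}{\partial\lambda_{i}}=\sigma_{k-1;i}/\sigma_{k}$, and $\frac{\partial^{2}\log\sigma_{k}}{\partial\lambda_{i}\partial\lambda_{j}}=\sigma_{k-2;ij}/\sigma_{k}-\sigma_{k-1;i}\sigma_{k-1;j}/\sigma_{k}^{2}$ (with the convention $\sigma_{k-2;ii}=0$, which makes the diagonal term simply $-\sigma_{k-1;i}^{2}/\sigma_{k}^{2}$). Collecting everything, the full expression becomes
\begin{equation*}
\sum_{i}\frac{\sigma_{k-1;i}}{\lambda_{i}\sigma_{k}}y_{i}^{2}+\sum_{i\neq j}\frac{\sigma_{k-2;ij}}{\sigma_{k}}y_{i}y_{j}-\Bigl(\sum_{i}\frac{\sigma_{k-1;i}}{\sigma_{k}}y_{i}\Bigr)^{2},
\end{equation*}
which is precisely the quantity shown to be non-negative in Lemma \ref{qfm}. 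So this case is immediate.

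For $G=S_{k}$, the derivatives are even simpler: $\frac{\partial S_{k}}{\partial\lambda_{i}}=k\lambda_{i}^{k-1}$ and $\frac{\partial^{2} S_{k}}{\partial\lambda_{i}\partial\lambda_{j}}$ is diagonal, equal to $k(k-1)\lambda_{i}^{k-2}\delta_{ij}$. A short computation gives
\begin{equation*}
\sum_{i}\frac{1}{\lambda_{i}}\frac{\partial\log S_{k}}{\partial\lambda_{i}}y_{i}^{2}+\sum_{i,j}\frac{\partial^{2}\log S_{k}}{\partial\lambda_{i}\partial\lambda_{j}}y_{i}y_{j}=\frac{k^{2}}{S_{k}^{2}}\Bigl[S_{k}\sum_{i}\lambda_{i}^{k-2}y_{i}^{2}-\Bigl(\sum_{i}\lambda_{i}^{k-1}y_{i}\Bigr)^{2}\Bigr],
\end{equation*}
after the $k(k-1)$ term combines with the first sum to produce $k^{2}$. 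The bracket is non-negative by the Cauchy--Schwarz inequality applied to the vectors $(\lambda_{i}^{k/2})$ and $(\lambda_{i}^{(k-2)/2}y_{i})$ (using $\lambda_{i}>0$ on $\Gamma_{+}$). This completes the $S_{k}$ case.

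There is no real obstacle here: once the reduction $F=G^{\alpha}\Rightarrow\alpha G$ is in place, the $\sigma_{k}$ case is literally Lemma \ref{qfm} rearranged, and the $S_{k}$ case collapses to Cauchy--Schwarz after direct differentiation. The only point one must be careful about is the sign of $\alpha$ in the reduction and the sign conventions in extracting the $(\sum\sigma_{k-1;i}y_{i}/\sigma_{k})^{2}$ term out of the mixed sum $\sum_{i,j}\partial^{2}\log\sigma_{k}/\partial\lambda_{i}\partial\lambda_{j}\,y_{i}y_{j}$.
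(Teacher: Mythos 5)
Your proof is correct and takes essentially the same route as the paper: the reduction $\log F=\alpha\log G$ to handle the power, Lemma \ref{qfm} for the $\sigma_k$ case, and Cauchy--Schwarz for $S_k$. The paper is just terser, stating the $\sigma_k^{\alpha}$ case is ``equivalent to Lemma \ref{qfm}'' without writing out the derivative computation you spell out.
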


    \begin{proof}
        For $F=\sigma_{k}^{\alpha}$, it is equivalent to Lemma \ref{qfm}.

        For $F=S_{k}^{\alpha}$, by the Cauchy-Schwarz inequality, we have
        \begin{equation*}
        \Big(\sum_i \frac{\lambda_{i}^{k-1}}{S_{k}}y_{i}\Big)^{2}\leq \Big(\sum_{i}\frac{\lambda_{i}^{k}}{S_{k}}\Big)\Big(\sum_{i}\frac{\lambda_{i}^{k-2}}{S_{k}}y_{i}^{2}\Big)=\sum_{i}\frac{\lambda_{i}^{k-2}}{S_{k}}y_{i}^{2},
        \end{equation*}
  which leads to the key inequality for $S_k^{\alpha}$.
    \end{proof}

    \begin{lem}\label{lamij+}
        If $F$ satisfies Condition \ref{condtn}, $\lambda\in \Gamma_+$ and $\lambda_{1}<\lambda_{2}\leq\lambda_{3}\leq\cdots\leq\lambda_{n}$, then for $i>j>1$, the following equation holds
        \begin{equation*}
        \frac{\frac{\partial F}{\partial \lambda_{i}}(\lambda_{i}-\lambda_{1})^{2}-\frac{\partial F}{\partial \lambda_{j}}(\lambda_{j}-\lambda_{1})^{2}}{(\lambda_{i}-\lambda_{1})(\lambda_{j}-\lambda_{1})(\lambda_{i}-\lambda_{j})}> 0.
        \end{equation*}
    \end{lem}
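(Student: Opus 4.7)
My plan is to reduce the claim to Condition iii) by factoring the numerator so that the combinations $\lambda_i F_i$ (which are what Condition iii) controls) appear explicitly. The key observation is
\[
F_i(\lambda_i - \lambda_1)^2 = p(\lambda_i)\cdot(\lambda_i F_i),\qquad p(t):=\frac{(t-\lambda_1)^2}{t},
\]
and similarly for $j$. So the numerator of the asserted quotient becomes
\[
p(\lambda_i)(\lambda_i F_i) - p(\lambda_j)(\lambda_j F_j).
\]

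A direct computation yields $p'(t)=(t-\lambda_1)(t+\lambda_1)/t^{2}$, which is strictly positive for $t>\lambda_1>0$, so $p$ is strictly increasing on $(\lambda_1,\infty)$. Because $i>j>1$ and $\lambda_1<\lambda_2\leq\cdots\leq\lambda_n$, we have $\lambda_i\geq\lambda_j\geq\lambda_2>\lambda_1>0$, and hence $p(\lambda_i)\geq p(\lambda_j)>0$, with strict inequality as soon as $\lambda_i>\lambda_j$. On the other hand, Condition \ref{condtn}\,iii) gives $\lambda_i F_i\geq \lambda_j F_j$, and Condition \ref{condtn}\,i) makes both $\lambda F$-products strictly positive.

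The telescoping identity
\[
p(\lambda_i)(\lambda_i F_i) - p(\lambda_j)(\lambda_j F_j) \;=\; p(\lambda_i)\bigl(\lambda_i F_i-\lambda_j F_j\bigr) \;+\; \bigl(p(\lambda_i)-p(\lambda_j)\bigr)(\lambda_j F_j)
\]
then expresses the numerator as a sum of two nonnegative terms. Whenever $\lambda_i>\lambda_j$, the second summand is a product of strictly positive factors, so the numerator is strictly positive; the denominator $(\lambda_i-\lambda_1)(\lambda_j-\lambda_1)(\lambda_i-\lambda_j)$ is also strictly positive under that hypothesis, giving the claim. (If $\lambda_i=\lambda_j$ both numerator and denominator vanish by the symmetry of $F$, so the inequality is to be understood on the open region $\{\lambda_i>\lambda_j\}$ where the quotient is defined.)

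There is no real obstacle here beyond spotting the factorization $F_i(\lambda_i-\lambda_1)^2=[(\lambda_i-\lambda_1)^2/\lambda_i]\cdot(\lambda_i F_i)$; once it is in place, the argument routes through Conditions i) and iii) alone and does not require the key inequality iv).
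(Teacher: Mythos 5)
Your proof is correct, and like the paper it invokes only Conditions i) and iii). The route is genuinely a different packaging of the same underlying facts, though: the paper divides the would-be inequality $F_i(\lambda_i-\lambda_1)^2>F_j(\lambda_j-\lambda_1)^2$ through by $(\lambda_i-\lambda_1)(\lambda_j-\lambda_1)$ and establishes the resulting statement as a three-link chain
\begin{equation*}
F_i\,\frac{\lambda_i-\lambda_1}{\lambda_j-\lambda_1}
\;>\;F_i\,\frac{\lambda_i}{\lambda_j}
\;\geq\;F_j
\;>\;F_j\,\frac{\lambda_j-\lambda_1}{\lambda_i-\lambda_1},
\end{equation*}
where the outer links use $\lambda_i>\lambda_j>\lambda_1>0$ together with positivity of $F_i,F_j$ (Condition i), and the middle link is exactly Condition iii). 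Your version instead factors the numerator as $p(\lambda_i)\,\lambda_i F_i-p(\lambda_j)\,\lambda_j F_j$ with $p(t)=(t-\lambda_1)^2/t$, checks that $p$ is strictly increasing on $(\lambda_1,\infty)$, and telescopes; this has the merit of surfacing the Condition iii) quantities $\lambda_i F_i$, $\lambda_j F_j$ explicitly and reducing the rest of the argument to the monotonicity of a single one-variable function. Either way the content is elementary. One small point: you set aside $\lambda_i=\lambda_j$ as ``undefined,'' but both in the paper and in the way the quotient is used later (inside $J_2$, cf.\ Lemma \ref{andrews}), it should be read as the limit of the quotient, which exists and is nonnegative by symmetry of $F$; the paper's proof opens by noting this case is ``easy to check'' rather than excluding it.
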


    \begin{proof}
       For the case $\lambda_{i}=\lambda_{j}$, it is easy to check. Then without loss of generality, we assume $\lambda_{i}>\lambda_{j}$ for $i>j$.
        Actually, for $i>j$, by Condition \ref{condtn} i) and iii), we have
        \begin{align*}
        \frac{\partial F}{\partial \lambda_{i}}\frac{\lambda_{i}-\lambda_{1}}{\lambda_{j}-\lambda_{1}}
        >\frac{\partial F}{\partial \lambda_{i}}\frac{\lambda_{i}}{\lambda_{j}}
        \geq\frac{\partial F}{\partial\lambda_{j}}
        > \frac{\partial F}{\partial\lambda_{j}}\frac{\lambda_{j}-\lambda_{1}}{\lambda_{i}-\lambda_{1}}.
        \end{align*}

    \end{proof}

\section{Fundamental formulas of self-similar solution with general $F$}
\label{Sec:Prelim}

	Let $X: M^{n}\rightarrow \mathbb{R}^{n+1}$ be a closed  convex hypersurface. Suppose that $e_{1}, e_{2},\cdots,e_{n}$ is an  orthonormal frame on $M$. Let $h=(h_{ij})$ be the second fundamental form on $M$ with respect to this given frame. And the principal curvatures are the eigenvalues of the second fundamental form $h$.
	
	Let us first consider the following general equation
	\begin{equation*}
	F+C=\langle X, \nu\rangle,
	\end{equation*}
	where $F=F(\lambda(h))$ is a homogeneous symmetric function of the principal curvatures of degree $\beta$, $C$ is a constant and $\nu$ is the outward normal vector field.
	And, let $\LF$ denote the operator $\LF=\frac{\partial F}{\partial h_{ij}}\nabla_{i}\nabla_{j}$. We also suppose $F>0$ and $(\frac{\partial F}{\partial h_{ij}})$ is positive definite.
	Inspired by \cite{m}, \cite{c-d} and \cite{b-c-d}, we have the following proposition.
	The summation convention is used unless otherwise stated.

	\begin{prop}\label{beqn}
	Given a smooth function $F: M\rightarrow \mathbb{R}^{n+1}$ described as above, the following equations hold:
	
	\begin{align*}
	&(1)& \LF F
	&=\langle X, \nabla F \rangle+\beta F-\frac{\partial F}{\partial h_{ij}}h_{jl}h_{li}(F+C),\\
	&(2)& \LF h_{kl}{}&=h_{klm}\langle X, e_m \rangle+h_{kl}-Ch_{km}h_{lm}-\frac{\partial^{2} F}{\partial h_{ij}\partial h_{st}}h_{ijk}h_{stl}\\
	 && &~-\frac{\partial F}{\partial h_{ij}}h_{mj}h_{mi}h_{kl}+(\beta-1) Fh_{km}h_{ml},\\
	&(3)& \LF b^{kl}
	&=\langle X,\nabla b^{kl} \rangle-b^{kl}+C\delta_{kl}+b^{kp}b^{ql}\frac{\partial^{2} F}{\partial h_{ij}\partial h_{st}}h_{ijp}h_{stq}\\
	&& &~+b^{kl}\frac{\partial F}{\partial h_{ij}}h_{mj}h_{mi}-(\beta-1) F\delta_{kl}+2b^{ks}b^{pt}b^{lq}\frac{\partial F}{\partial h_{ij}}h_{sti}h_{pqj},\\
	&(4)&\LF (F\tr b)
	&=\langle X, \nabla (F\tr b) \rangle+(\beta-1)F\tr b-n(\beta-1)F^{2}\\
	&& &~+C(nF-\tr b \frac{\partial F}{\partial h_{ij}}h_{jl}h_{li})+2\frac{\partial F}{\partial h_{ij}}\nabla_{i}F\nabla_{j}\tr b\\
	&& &~+Fb^{kp}b^{qk}\frac{\partial^{2} F}{\partial h_{ij}\partial h_{st}}h_{ijp}h_{stq}+2Fb^{ks}b^{pt}b^{kq}\frac{\partial F}{\partial h_{ij}}h_{sti}h_{pqj},\\
	&(5)& \LF \frac{|X|^{2}}{2}&=\sum_{i}\frac{\partial F}{\partial h_{ii}}-\beta F(F+C).
	\end{align*}
	
	\end{prop}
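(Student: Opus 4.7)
The plan is to derive the five identities by repeatedly differentiating the self-similar equation $F+C=\langle X,\nu\rangle$ and combining the result with the Gauss, Codazzi and Ricci identities of a hypersurface in $\mathbb{R}^{n+1}$. I would work at a chosen point in an orthonormal frame that is geodesic there and in which $h_{ij}$ is diagonal, so that $\frac{\partial F}{\partial h_{ij}}=f_i\delta_{ij}$ with $f_i=\frac{\partial F}{\partial\lambda_i}$. The only geometric inputs beyond the self-similar equation are the ambient-derivative formulas
\[
\bar\nabla_{e_i}X=e_i,\qquad \bar\nabla_{e_i}\nu=h_{ij}e_j,\qquad \bar\nabla_{e_j}e_k=-h_{jk}\nu,
\]
from which one obtains the two workhorse first-order identities
\[
\nabla_iF=h_{ij}\langle X,e_j\rangle,\qquad \nabla_j\langle X,e_k\rangle=\delta_{jk}-h_{jk}(F+C).
\]

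Formula $(5)$ is immediate: $\nabla_i(|X|^2/2)=\langle X,e_i\rangle$ gives $\nabla_j\nabla_i(|X|^2/2)=\delta_{ij}-h_{ij}(F+C)$, and contracting with $\frac{\partial F}{\partial h_{ij}}$ plus Euler's relation $\frac{\partial F}{\partial h_{ij}}h_{ij}=\beta F$ produces $(5)$. Formula $(1)$ follows in parallel: differentiating $\nabla_iF=h_{ij}\langle X,e_j\rangle$ once more yields
\[
\nabla_k\nabla_lF=h_{klm}\langle X,e_m\rangle+h_{kl}-h_{km}h_{ml}(F+C),
\]
and contracting with $\frac{\partial F}{\partial h_{kl}}$ gives $(1)$, after Codazzi collapses $\frac{\partial F}{\partial h_{kl}}\,h_{klm}=\nabla_mF$ into $\langle X,\nabla F\rangle$ and Euler collapses the second term into $\beta F$.

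The heart of the argument is formula $(2)$. I would combine two ingredients. First, the chain rule $\nabla_k\nabla_lF=\frac{\partial F}{\partial h_{ij}}\nabla_k\nabla_lh_{ij}+\frac{\partial^2F}{\partial h_{ij}\partial h_{st}}\nabla_kh_{ij}\nabla_lh_{st}$ rewrites $\frac{\partial F}{\partial h_{ij}}\nabla_k\nabla_lh_{ij}$ in terms of the Hessian of $F$ computed above. Second, I would derive a Simons-type identity for $\nabla_i\nabla_jh_{kl}-\nabla_k\nabla_lh_{ij}$ by writing $\nabla_i\nabla_jh_{kl}=\nabla_k\nabla_ih_{jl}+[\nabla_i,\nabla_k]h_{jl}$ via Codazzi, evaluating the commutator by the Ricci identity and substituting the Gauss relation $R_{ijkl}=h_{ik}h_{jl}-h_{il}h_{jk}$. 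Diagonalising $h$ and contracting the four resulting curvature terms with $\frac{\partial F}{\partial h_{ij}}=f_i\delta_{ij}$, the two cubic-in-$\lambda$ pieces cancel and the remainder collapses, via Euler, to precisely $(\beta-1)F\,h_{km}h_{ml}-h_{kl}\frac{\partial F}{\partial h_{ij}}h_{mi}h_{mj}$; inserting this into the chain-rule identity yields $(2)$. The main obstacle is precisely this bookkeeping: the four Simons terms have to be tracked together with the matrix-valued coefficient $\frac{\partial F}{\partial h_{ij}}$, and the cancelling cubic contributions must be recognised so that only the Euler-type expressions survive.

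Finally, formulas $(3)$ and $(4)$ follow from $(2)$. Since $b=h^{-1}$, one has $\nabla_ib^{kl}=-b^{kp}b^{lq}\nabla_ih_{pq}$, and a second differentiation gives
\[
\nabla_i\nabla_jb^{kl}=-b^{kp}b^{lq}\nabla_i\nabla_jh_{pq}+b^{kr}b^{ps}b^{lq}h_{rsi}h_{pqj}+b^{kp}b^{lr}b^{qs}h_{rsi}h_{pqj}.
\]
Contracting with $\frac{\partial F}{\partial h_{ij}}$, the two quartic-in-$b$ terms coincide after relabelling (using symmetry of $\frac{\partial F}{\partial h_{ij}}$ in $(i,j)$ and symmetry of $h_{ijk}$ in its three indices via Codazzi), accounting for the factor $2$ in $(3)$; substituting $(2)$ for $\LF h_{pq}$ and simplifying the contractions through $b^{kp}h_{pq}=\delta_{kq}$ yields the remaining terms with the correct signs. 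For $(4)$, I apply the product rule $\LF(FG)=F\,\LF G+G\,\LF F+2\frac{\partial F}{\partial h_{ij}}\nabla_iF\nabla_jG$ with $G=\tr b$, take the trace of $(3)$ to obtain $\LF\tr b$ and insert $(1)$ for $\LF F$; the two opposite quadratic contributions $F\,\tr b\,\frac{\partial F}{\partial h_{ij}}h_{jl}h_{li}$ cancel exactly, while the remaining $C$-pieces group into $C\bigl(nF-\tr b\,\frac{\partial F}{\partial h_{ij}}h_{jl}h_{li}\bigr)$, producing $(4)$ in the stated form.
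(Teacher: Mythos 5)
Your proposal is correct and follows essentially the same route as the paper: differentiate the self-similar equation twice to obtain $\nabla_i F=h_{il}\langle X,e_l\rangle$ and the resulting Hessians of $F$ and $|X|^2$; relate $\LF h_{kl}$ to $\nabla_k\nabla_l F$ plus a Simons-type curvature correction via Codazzi, the Ricci identity and the Gauss equation, with the two cubic terms cancelling; and propagate to $b^{kl}$ and $F\,\tr b$ by $\nabla b=-b(\nabla h)b$, the product rule, and Euler's relation. The only cosmetic difference is that the paper carries out the cancellations by index relabelling rather than by diagonalising $h$ at a point.
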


\begin{proof}	
	(1)	Differentiating \eqref{xen+1} gives
	\begin{equation}\label{df}
	\nabla_{j}F=h_{jl}\langle X, e_l \rangle
	\end{equation}
	
	and
	\begin{align*}\label{2-1}
	\nabla_{i}\nabla_{j}F&=h_{jli}\langle X, e_l \rangle+h_{ij}-h_{jl}h_{il}\langle X,\nu \rangle\nonumber\\
	&=h_{jli}\langle X, e_l \rangle+h_{ij}-h_{jl}h_{il}(F+C).	
	\end{align*}
	Then, by $\frac{\partial F}{\partial h_{ij}}h_{ij}=\beta F$, we obtain
	\begin{equation*}\label{2-2}
	\LF F
	=\nabla_{l}F\langle X, e_l \rangle+\beta F-\frac{\partial F}{\partial h_{ij}}h_{jl}h_{li}(F+C).
	\end{equation*}
	
	(2)	By Codazzi equation and Ricci identity, we obtain
	\begin{equation*}
	h_{klji}=h_{kjli}=h_{kjil}+h_{mj}R_{mkli}+h_{km}R_{mjli}.
	\end{equation*}
	Then, using Gauss equation
	we have
	\begin{align*}
	\LF h_{kl}
	&=\frac{\partial F}{\partial h_{ij}}(h_{kjil}+h_{mj}R_{mkli}+h_{km}R_{mjli})\\
	&=\nabla_{l}(\frac{\partial F}{\partial h_{ij}}h_{ijk})-\frac{\partial^{2} F}{\partial h_{ij}\partial h_{st}}h_{ijk}h_{stl}+\frac{\partial F}{\partial h_{ij}}h_{mj}(h_{ml}h_{ki}-h_{mi}h_{kl})\\
	&~+\frac{\partial F}{\partial h_{ij}}h_{km}(h_{ml}h_{ij}-h_{mi}h_{jl})\\
	&=\nabla_{l}\nabla_{k}F-\frac{\partial^{2} F}{\partial h_{ij}\partial h_{st}}h_{ijk}h_{stl}-\frac{\partial F}{\partial h_{ij}}h_{mj}h_{mi}h_{kl}+\frac{\partial F}{\partial h_{ij}}h_{km}h_{ml}h_{ij}\\
	&=h_{klm}\langle X, e_m \rangle+h_{kl}-h_{km}h_{lm}(F+C)-\frac{\partial^{2} F}{\partial h_{ij}\partial h_{st}}h_{ijk}h_{stl}\\
	&~-\frac{\partial F}{\partial h_{ij}}h_{mj}h_{mi}h_{kl}+\beta Fh_{km}h_{ml}.
	\end{align*}
	
	(3)	Since $h_{km}b^{ml}=\delta_{kl}$, we have
	\begin{equation}\label{eq:b_klj}
	\nabla_{j}b^{kl}=-b^{kp}b^{lq}\nabla_{j}h_{pq}.
	\end{equation}
	
	And,
	\begin{align*}
	\nabla_{i}\nabla_{j}b^{kl}&=-\nabla_{i}(b^{kp}b^{lq}\nabla_{j}h_{pq})\\
	&=-b^{kp}b^{ql}\nabla_{i}\nabla_{j}h_{pq}+b^{ks}b^{pt}b^{lq}\nabla_{i}h_{st}\nabla_{j}h_{pq}+b^{kp}b^{ls}b^{qt}\nabla_{i}h_{st}\nabla_{j}h_{pq}\\
	&=-b^{kp}b^{ql}\nabla_{i}\nabla_{j}h_{pq}+2b^{ks}b^{pt}b^{lq}\nabla_{i}h_{st}\nabla_{j}h_{pq}.
	\end{align*}
	
	Then, we obtain
	\begin{align*}
	\LF b^{kl} &=-b^{kp}b^{ql}\frac{\partial F}{\partial h_{ij}}\nabla_{i}\nabla_{j}h_{pq}+2b^{ks}b^{pt}b^{lq}\frac{\partial F}{\partial h_{ij}}\nabla_{i}h_{st}\nabla_{j}h_{pq}\\
	&~-b^{kp}b^{ql}\frac{\partial F}{\partial h_{ij}}h_{pm}h_{mq}h_{ij}+2b^{ks}b^{pt}b^{lq}\frac{\partial F}{\partial h_{ij}}h_{sti}h_{pqj}\\
	&=\langle X,\nabla b^{kl} \rangle-b^{kl}+(F+C)\delta_{kl}+b^{kp}b^{ql}\frac{\partial^{2} F}{\partial h_{ij}\partial h_{st}}h_{ijp}h_{stq}\\
	&~+b^{kl}\frac{\partial F}{\partial h_{ij}}h_{mj}h_{mi}-\beta F\delta_{kl}+2b^{ks}b^{pt}b^{lq}\frac{\partial F}{\partial h_{ij}}h_{sti}h_{pqj}.
	\end{align*}
	
	(4)	From (3), we have
	\begin{align*}
		\frac{\partial F}{\partial h_{ij}}\nabla_{i}\nabla_{j}\tr b
		&=\langle X,\nabla \tr b \rangle-\tr b+n(F+C)+b^{kp}b^{qk}\frac{\partial^{2} F}{\partial h_{ij}\partial h_{st}}h_{ijp}h_{stq}\\
		&~+\tr b\frac{\partial F}{\partial h_{ij}}h_{mj}h_{mi}-n\beta F+2b^{ks}b^{pt}b^{kq}\frac{\partial F}{\partial h_{ij}}h_{sti}h_{pqj}.
	\end{align*}
	
	Furthermore,
	\begin{align*}
		\LF(F\tr b)
		&=2\frac{\partial F}{\partial h_{ij}}\nabla_{i}F\nabla_{j}\tr b+\tr b\frac{\partial F}{\partial h_{ij}}\nabla_{i}\nabla_{j}F+F\frac{\partial F}{\partial h_{ij}}\nabla_{i}\nabla_{j}\tr b\\
		&=2\frac{\partial F}{\partial h_{ij}}\nabla_{i}F\nabla_{j}\tr b+\tr b\langle X, \nabla F \rangle+\tr b\frac{\partial F}{\partial h_{ij}}h_{ij}\\
		&~-\tr b\frac{\partial F}{\partial h_{ij}}h_{jl}h_{li}(F+C)+F\langle X,\nabla \tr b \rangle-F\tr b+nF(F+C)\\
		&~+Fb^{kp}b^{qk}\frac{\partial^{2} F}{\partial h_{ij}\partial h_{st}}h_{ijp}h_{stq}+F\tr B\frac{\partial F}{\partial h_{ij}}h_{mj}h_{mi}-nF\frac{\partial F}{\partial h_{ij}}h_{ij} \\
		&~+2Fb^{ks}b^{pt}b^{kq}\frac{\partial F}{\partial h_{ij}}h_{sti}h_{pqj} \\
		&=\langle X, \nabla (F\tr b) \rangle+(\beta-1)F\tr B-n(\beta-1)F^{2}+C(nF-\tr B\frac{\partial F}{\partial h_{ij}}h_{jl}h_{li})\\
		&~+2\frac{\partial F}{\partial h_{ij}}\nabla_{i}F\nabla_{j}\tr b +Fb^{kp}b^{qk}\frac{\partial^{2} F}{\partial h_{ij}\partial h_{st}}h_{ijp}h_{stq}+2Fb^{ks}b^{pt}b^{kq}\frac{\partial F}{\partial h_{ij}}h_{sti}h_{pqj}. \label{eq:FtrB}
	\end{align*}
	
	(5) By direct computation and \eqref{xen+1}, we have
	\begin{align*}
	\LF\frac{|X|^{2}}{2}
	&=\frac{\partial F}{\partial h_{ij}}\nabla_{i}(\langle X,e_{j} \rangle)\\
	&=\sum_{i}\frac{\partial F}{\partial h_{ii}}-(F+C)\frac{\partial F}{\partial h_{ij}}h_{ij}.
	\end{align*}
\end{proof}
	
	To finish this section, we list the following well-known result (See for example \cite{An94-1} and \cite{Gerhardt}).

	\begin{lem}\label{andrews}
		If $W=(w_{ij})$ is a symmetric real matrix and $\lambda_{m}=\lambda_{m}(W)$ is one of its eigenvalues ($m=1,\cdots,n$).
		If $F=F(W)=F(\lambda(W))$, then for any real symmetric matrix $B=(b_{ij})$, we have the following formulas:
		\begin{itemize}
			\item[(i)]
			$\displaystyle\frac{\partial F}{\partial w_{ij}}b_{ij}=\frac{\partial F}{\partial \lambda_{p}}b_{pp},$
			\item[(ii)]
			$\displaystyle\frac{\partial^{2} F}{\partial w_{ij}\partial w_{st}}b_{ij}b_{st}=\frac{\partial^{2} F}{\partial \lambda_{p}\partial \lambda_{q}}b_{pp}b_{qq}+2\sum_{p<q}\frac{\frac{\partial F}{\partial \lambda_{p}}-\frac{\partial F}{\partial \lambda_{q}}}{\lambda_{p}-\lambda_{q}}b_{pq}^{2}.$
		\end{itemize}
	\end{lem}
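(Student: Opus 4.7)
The plan is to reduce both identities to a computation at a matrix in diagonal form, exploiting the fact that $F$ is invariant under orthogonal conjugation (which is forced by $F$ depending on $W$ only through its unordered tuple of eigenvalues). Both sides of (i) and (ii) are, respectively, linear and bilinear in $B$, and they transform covariantly when $W$ and $B$ are simultaneously conjugated by an orthogonal matrix. Hence it suffices to verify them when $W = \operatorname{diag}(\lambda_1, \ldots, \lambda_n)$, and the result then follows from eigenvalue perturbation theory applied to the one-parameter family $t \mapsto \lambda(W + tB)$.

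First I would treat the generic case in which the spectrum of $W$ is simple. The first-order perturbation formula $\lambda_p(W + tB) = \lambda_p + t\, b_{pp} + O(t^2)$ combined with the chain rule gives (i) immediately. For (ii), the second-order Rayleigh--Schr\"odinger expansion reads
\begin{equation*}
\lambda_p(W + tB) = \lambda_p + t\, b_{pp} + t^2 \sum_{q \neq p} \frac{b_{pq}^2}{\lambda_p - \lambda_q} + O(t^3),
\end{equation*}
so $\frac{d^2}{dt^2}\big|_{t=0} F(\lambda(W + tB))$ decomposes into the chain-rule cross term $\sum_{p,q} \frac{\partial^2 F}{\partial \lambda_p \partial \lambda_q} b_{pp} b_{qq}$ plus the contribution $2\sum_p \frac{\partial F}{\partial \lambda_p} \sum_{q \neq p} \frac{b_{pq}^2}{\lambda_p - \lambda_q}$ from the second derivatives of the eigenvalues. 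Symmetrizing the latter sum in the index pair $(p, q)$ using $b_{pq}^2 = b_{qp}^2$ collapses it into $2\sum_{p < q} \frac{\partial F/\partial \lambda_p - \partial F/\partial \lambda_q}{\lambda_p - \lambda_q} b_{pq}^2$, producing exactly the off-diagonal correction in (ii).

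The main subtle point is how to handle coincident eigenvalues, where the perturbation series and the difference quotients in (ii) develop apparent singularities. These quotients are to be read in the limiting sense as $\frac{\partial^2 F}{\partial \lambda_p^2}$ along the coincidence locus $\lambda_p = \lambda_q$; consistency is guaranteed by the symmetry of $F$, which forces $\frac{\partial F}{\partial \lambda_p} = \frac{\partial F}{\partial \lambda_q}$ whenever $\lambda_p = \lambda_q$. With this convention both sides of (ii) are continuous functions of $W$, and the identity extends from the dense open subset of symmetric matrices with simple spectrum to all symmetric $W$ by continuity, completing the proof.
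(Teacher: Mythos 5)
The paper does not prove this lemma; it simply cites it as a well-known result (referring to Andrews \cite{An94-1} and Gerhardt \cite{Gerhardt}), so there is no in-paper argument to compare your proposal against. Your proof is the standard perturbation-theoretic derivation from the references and is essentially correct: reduce to a diagonal $W$ by orthogonal invariance, verify (i) and (ii) on the dense set of matrices with simple spectrum via the first- and second-order eigenvalue expansions, and extend by continuity. One small inaccuracy in the final paragraph: the limiting value of the quotient $\bigl(\partial F/\partial\lambda_p - \partial F/\partial\lambda_q\bigr)/(\lambda_p - \lambda_q)$ as $\lambda_p \to \lambda_q$ is
\begin{equation*}
\frac{\partial^2 F}{\partial\lambda_p^2} - \frac{\partial^2 F}{\partial\lambda_p\,\partial\lambda_q},
\end{equation*}
not $\partial^2 F/\partial\lambda_p^2$, as one sees by applying L'H\^{o}pital (or a Taylor expansion) to the antisymmetric-in-$(\lambda_p,\lambda_q)$ numerator $F_p - F_q$. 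This misstatement does not damage the argument, since the continuity extension only uses that the limit exists (guaranteed, as you note, by the symmetry of $F$) and that both sides of (ii) are continuous on the full symmetric cone; but the stated value should be corrected.
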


	\begin{remark}
		In the above lemma, $\frac{\frac{\partial F}{\partial \lambda_{p}}-\frac{\partial F}{\partial \lambda_{q}}}{\lambda_{p}-\lambda_{q}}$ is interpreted as a limit if $\lambda_{p}=\lambda_{q}$.
	\end{remark}
		
	\section{Analysis at the maximum points of $W$}
	\label{Sec:Pogorelov}

	In the recent paper \cite{b-c-d}, S.~Brendle, K.~Choi and P.~Daskalopoulos proved the following powerful lemma.
	
	\begin{lem}[\cite{b-c-d}]\label{bcd}
		Let $\mu$ denote the multiplicity of $\lambda_{1}$ at a point $x_{0}$, i.e., $\lambda_{1}(x_{0})=\cdots=\lambda_{\mu}(x_{0})<\lambda_{\mu+1}(x_{0})$. Suppose that $\varphi$ is a smooth function such that $\varphi \leq \lambda_{1}$ everywhere and $\varphi(x_{0})=\lambda_{1}(x_{0})$. Then, at $x_{0}$, we have\\
		i) $h_{kli}=\nabla_{i} \varphi \delta_{kl}$ for $1\leq k,l\leq \mu$.\\
		ii) $\nabla_{i}\nabla_{i} \varphi \leq h_{11ii}-2\sum_{l>\mu}(\lambda_{l}-\lambda_{1})^{-1}h_{1li}^{2}.$
	\end{lem}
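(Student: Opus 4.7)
The plan is to realize both statements as consequences of the fact that, for any smooth unit vector field $v$ near $x_{0}$ with $v(x_{0})$ a $\lambda_{1}$-eigenvector of $h$, the function $h(v,v)-\varphi$ is nonnegative on a neighbourhood of $x_{0}$ and vanishes at $x_{0}$. Hence $x_{0}$ is a local minimum, and we may extract first- and second-derivative information. To keep the computations clean I would work in geodesic normal coordinates at $x_{0}$ with an orthonormal frame $e_{1},\dots,e_{n}$ chosen so that $h_{ij}(x_{0})=\lambda_{i}\delta_{ij}$ and so that the frame is parallel at $x_{0}$; this ensures that covariant derivatives of $e_{k}$ at $x_{0}$ vanish and that any $v$ with $v(x_{0})=e_{k}(x_{0})$ is automatically of unit length to first order.

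For part (i), I would test the minimum condition against the vector $v=e_{k}$ for each $1\le k\le \mu$. Because $v(x_{0})$ is a $\lambda_{1}$-eigenvector, $h(e_{k},e_{k})(x)\ge \lambda_{1}(x)\ge \varphi(x)$, with equality at $x_{0}$. Setting the gradient of $h(e_{k},e_{k})-\varphi$ to zero at $x_{0}$ gives $h_{kki}=\nabla_{i}\varphi$. For the off-diagonal relations I would repeat the argument with the rotated frame vector $(e_{k}+e_{l})/\sqrt{2}$ (or, more symmetrically, with $\cos\theta\,e_{k}+\sin\theta\,e_{l}$ for all $\theta$) for distinct $k,l\le\mu$; combining these identities with the diagonal ones yields $h_{kli}=0$ whenever $k\neq l$ are both in $\{1,\dots,\mu\}$.

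For part (ii), the key idea is to use our freedom in extending $e_{1}$ to produce the sharpest possible second-derivative bound. Along the geodesic $\gamma(t)=\exp_{x_{0}}(te_{i})$, consider the family of unit vector fields
\[
\hat V(t)=\frac{e_{1}+t\sum_{l>\mu}a_{l}\,e_{l}}{\bigl|e_{1}+t\sum_{l>\mu}a_{l}\,e_{l}\bigr|}+O(t^{2}),
\]
where $a_{l}\in\mathbb{R}$ are free parameters. Since $\hat V(0)$ is again a unit $\lambda_{1}$-eigenvector, $\hat V(t)^{T}h(\gamma(t))\hat V(t)\ge\varphi(\gamma(t))$ with equality and matching first derivatives at $t=0$ (by part (i)). Expanding both sides to order $t^{2}$ and using $h_{ij}(x_{0})=\lambda_{i}\delta_{ij}$ produces the inequality
\[
h_{11ii}+4\sum_{l>\mu}a_{l}\,h_{1li}+2\sum_{l>\mu}a_{l}^{2}(\lambda_{l}-\lambda_{1})\;\ge\;\nabla_{i}\nabla_{i}\varphi.
\]
Minimising the left-hand side in $a_{l}$ — the optimum is $a_{l}=-h_{1li}/(\lambda_{l}-\lambda_{1})$, which is well defined precisely because the sum is restricted to $l>\mu$ where $\lambda_{l}>\lambda_{1}$ — gives the announced bound $\nabla_{i}\nabla_{i}\varphi\le h_{11ii}-2\sum_{l>\mu}(\lambda_{l}-\lambda_{1})^{-1}h_{1li}^{2}$.

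The main technical obstacle is bookkeeping the frame: one must verify that, in the chosen frame, the Taylor coefficients $h_{jki}$, $h_{jkii}$ that appear coincide with the components of $\nabla h$, $\nabla^{2}h$, and that normalising $\hat V$ does not contribute additional terms at the orders we care about. Both issues are resolved by the standard choice of a frame parallel at $x_{0}$ together with geodesic normal coordinates, after which the expansions reduce to straightforward algebra. A secondary delicate point is that the index $l$ in the perturbation must stay strictly outside the $\lambda_{1}$-eigenspace; fortunately, perturbations inside that eigenspace contribute nothing new (they correspond to rotations already exploited in part (i)), so restricting to $l>\mu$ is both necessary and sufficient.
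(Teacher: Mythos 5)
The paper quotes this lemma from Brendle--Choi--Daskalopoulos \cite{b-c-d} without providing a proof, so there is no in-paper argument to compare against. Your proof is correct and follows the same underlying idea as the cited source: one exploits the interior minimum of $\lambda_1-\varphi$ at $x_0$ by testing the second-order condition against a tilted unit vector field, and the optimization $a_l=-h_{1li}/(\lambda_l-\lambda_1)$ exactly reproduces the eigenvalue-perturbation correction $-2\sum_{l>\mu}(\lambda_l-\lambda_1)^{-1}h_{1li}^2$, while the rotation trick within the $\lambda_1$-eigenspace gives (i).
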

	
	Let $\tilde{W}=\frac{F}{\lambda_{1}}-\frac{\beta-1}{2\beta}|X|^{2}$ and let $x_{0}$ be an arbitrary point where $\tilde{W}$ attains its maximum.
	Then we can choose a smooth function $\varphi$ such that $\varphi\leq \lambda_1$ everywhere, $\varphi(x_0)=\lambda_1(x_0)$ and   $W=\frac{F}{\varphi}-\frac{\beta-1}{2\beta}|X|^{2}$ attains its maximum at $x_{0}$. 
Now, we consider $W$ at $x_{0}$ and apply the previous lemma.
	
	\begin{lem}
		At $x_{0}$, $W$ satisfies the following inequality
		\begin{align*}
		\LF W&\geq \langle X, \nabla(\frac{F}{\varphi}) \rangle+2\frac{\partial F}{\partial h_{ij}}\nabla_{i}F\nabla_{j}\frac{1}{\varphi}+2F\lambda_{1}^{-3}\frac{\partial F}{\partial \lambda_{i}}h_{11i}^{2}\\
		&~+F\lambda_{1}^{-2}\frac{\partial^{2} F}{\partial h_{ij}\partial h_{st}}h_{ij1}h_{st1}+2F\lambda_{1}^{-2}\frac{\partial F}{\partial \lambda_{i}}\sum_{l>\mu}(\lambda_{l}-\lambda_{1})^{-1}h_{1li}^{2}\\
		&~+\frac{\beta-1}{\beta}\frac{\partial F}{\partial\lambda_{i}}(\frac{\lambda_{i}}{\lambda_{1}}-1)-C\frac{\partial F}{\partial \lambda_{i}}\lambda_{i}(\frac{\lambda_{i}}{\lambda_{1}}-1).
		\end{align*}
	\end{lem}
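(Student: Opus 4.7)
The plan is to carry out the computation at $x_0$ directly. Writing $W = F\varphi^{-1} - \frac{\beta-1}{2\beta}|X|^2$ and applying the product rule to $\LF = \frac{\partial F}{\partial h_{ij}}\nabla_i\nabla_j$, I obtain
\begin{align*}
\LF W = \varphi^{-1}\LF F - F\varphi^{-2}\LF\varphi + 2F\varphi^{-3}\frac{\partial F}{\partial h_{ij}}\nabla_i\varphi\nabla_j\varphi + 2\frac{\partial F}{\partial h_{ij}}\nabla_i F\nabla_j(\varphi^{-1}) - \frac{\beta-1}{\beta}\LF\frac{|X|^2}{2}.
\end{align*}
At $x_0$ I choose an orthonormal frame diagonalising $h_{ij}$, so that $\frac{\partial F}{\partial h_{ij}}$ is also diagonal with entries $\frac{\partial F}{\partial\lambda_i}$. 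By Lemma \ref{bcd}(i), $\nabla_i\varphi = h_{11i}$ at $x_0$, so the third term on the right becomes $2F\lambda_1^{-3}\frac{\partial F}{\partial\lambda_i}h_{11i}^2$, matching the corresponding summand in the claimed inequality, and the fourth term appears verbatim.

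The key step is to bound $-F\varphi^{-2}\LF\varphi$ from below. Since $\LF\varphi = \sum_i\frac{\partial F}{\partial\lambda_i}\nabla_i\nabla_i\varphi$ in the chosen frame, Lemma \ref{bcd}(ii) together with the positivity of $\frac{\partial F}{\partial\lambda_i}$ gives
\begin{align*}
\LF\varphi \leq \LF h_{11} - 2\sum_i\sum_{l>\mu}\frac{\partial F}{\partial\lambda_i}(\lambda_l-\lambda_1)^{-1}h_{1li}^2.
\end{align*}
Because $-F\varphi^{-2} = -F\lambda_1^{-2} < 0$, the inequality flips, and replacing $\LF h_{11}$ by the expression from Proposition \ref{beqn}(2) produces both the Hessian term $F\lambda_1^{-2}\frac{\partial^2 F}{\partial h_{ij}\partial h_{st}}h_{ij1}h_{st1}$ and the curvature-gap term $2F\lambda_1^{-2}\frac{\partial F}{\partial\lambda_i}\sum_{l>\mu}(\lambda_l-\lambda_1)^{-1}h_{1li}^2$ appearing on the right-hand side.

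It remains to substitute Proposition \ref{beqn}(1) for $\LF F$ and Proposition \ref{beqn}(5) for $\LF(|X|^2/2)$ and to rearrange. The two first-order pieces $\lambda_1^{-1}\langle X,\nabla F\rangle$ from $\varphi^{-1}\LF F$ and $-F\lambda_1^{-2}h_{11m}\langle X,e_m\rangle$ from $-F\lambda_1^{-2}\LF h_{11}$ combine, via $h_{11m} = \nabla_m\varphi$, into $\langle X,\nabla(F/\varphi)\rangle$. All remaining zeroth-order contributions are linear combinations of $F$, $\sum_i\frac{\partial F}{\partial\lambda_i}$, and $\sum_i\frac{\partial F}{\partial\lambda_i}\lambda_i^2$: the two $F\lambda_1^{-1}\sum_i\frac{\partial F}{\partial\lambda_i}\lambda_i^2$ contributions (from $\varphi^{-1}\LF F$ and from $-F\lambda_1^{-2}\LF h_{11}$) cancel, and so do the two $(\beta-1)F^2$ contributions (from $-F\lambda_1^{-2}\LF h_{11}$ and from $-\frac{\beta-1}{\beta}\LF(|X|^2/2)$). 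Applying Euler's identity $\sum_i\frac{\partial F}{\partial\lambda_i}\lambda_i = \beta F$ to the surviving remainder collapses it to $\frac{\beta-1}{\beta}\frac{\partial F}{\partial\lambda_i}(\lambda_i/\lambda_1 - 1) - C\frac{\partial F}{\partial\lambda_i}\lambda_i(\lambda_i/\lambda_1 - 1)$, which is exactly the expected pair of scalar terms.

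The main obstacle is sign management in the second step: Lemma \ref{bcd}(ii) only provides an upper bound on $\nabla_i\nabla_i\varphi$, which becomes useful precisely because its coefficient $-F\varphi^{-2}$ is negative. Beyond that, the work is careful bookkeeping of the three separate sources of zeroth-order terms and a single appeal to homogeneity; any mistake is most likely to occur in tracking which $(\beta-1)F^2$ and $F\lambda_1^{-1}\sum_i\frac{\partial F}{\partial\lambda_i}\lambda_i^2$ cancellations are forced by the computation.
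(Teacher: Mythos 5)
Your proposal is correct and follows essentially the same route as the paper: expand $\LF W$ by the product rule, use Lemma~\ref{bcd}(i) to identify $\nabla_i\varphi = h_{11i}$ at $x_0$, use Lemma~\ref{bcd}(ii) together with $\frac{\partial F}{\partial\lambda_i}>0$ and the negativity of $-F\varphi^{-2}$ to bound $-F\varphi^{-2}\LF\varphi$ from below via $\LF h_{11}$, then substitute Proposition~\ref{beqn}(1),(2),(5) and use $\sum_i\frac{\partial F}{\partial\lambda_i}\lambda_i=\beta F$ to collapse the zeroth-order remainder. The only cosmetic difference is that the paper first assembles $\LF(F/\varphi)$ and then subtracts $\frac{\beta-1}{\beta}\LF\frac{|X|^2}{2}$, while you expand $\LF W$ in one step; the cancellations you track (the two $F\lambda_1^{-1}\sum_i\frac{\partial F}{\partial\lambda_i}\lambda_i^2$ terms and the two $(\beta-1)F^2$ terms) are exactly the ones that occur in the paper's derivation.
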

	
	\begin{proof}
		At $x_{0}$, it follows from Lemma \ref{bcd} and Proposition \ref{beqn} that
		\begin{align*}
		\LF \varphi&\leq \LF h_{11}-2\frac{\partial F}{\partial \lambda_{i}}\sum_{l>\mu}(\lambda_{l}-\lambda_{1})^{-1}h_{1li}^{2}\\
		&=h_{11m}\langle X, e_m \rangle+\lambda_{1}-\lambda_{1}^{2}C-\lambda_{1}\frac{\partial F}{\partial h_{ij}}h_{mj}h_{mi}+\lambda_{1}^{2}(\beta-1)F\\
		&~-\frac{\partial^{2} F}{\partial h_{ij}\partial h_{st}}h_{ij1}h_{st1}-2\frac{\partial F}{\partial \lambda_{i}}\sum_{l>\mu}(\lambda_{l}-\lambda_{1})^{-1}h_{1li}^{2}.
		\end{align*}
		
		Furthermore, we have
		\begin{align*}
		\LF\frac{F}{\varphi}&=2\frac{\partial F}{\partial h_{ij}}\nabla_{i}F\nabla_{j}\frac{1}{\varphi}+\frac{1}{\varphi}\LF F+F\LF\frac{1}{\varphi}\\
		&\geq 2\frac{\partial F}{\partial h_{ij}}\nabla_{i}F\nabla_{j}\frac{1}{\varphi}+\lambda_{1}^{-1}\nabla_{l}F\langle X, e_l \rangle+\lambda_{1}^{-1}\frac{\partial F}{\partial h_{ij}}h_{ij}-\lambda_{1}^{-1}\frac{\partial F}{\partial h_{ij}}h_{jl}h_{li}(F+C)\\
		&~+2F\lambda_{1}^{-3}\frac{\partial F}{\partial \lambda_{i}}h_{11i}^{2}+F\nabla_{m}\frac{1}{\varphi}\langle X, e_m \rangle-F\lambda_{1}^{-1}+(1-\beta)F^{2}+FC+F\lambda_{1}^{-1}\frac{\partial F}{\partial h_{ij}}h_{mj}h_{mi}\\
		&~+F\lambda_{1}^{-2}\frac{\partial^{2} F}{\partial h_{ij}\partial h_{st}}h_{ij1}h_{st1}+2F\lambda_{1}^{-2}\frac{\partial F}{\partial \lambda_{i}}\sum_{l>\mu}(\lambda_{l}-\lambda_{1})^{-1}h_{1li}^{2}\\
		&=2\frac{\partial F}{\partial h_{ij}}\nabla_{i}F\nabla_{j}\frac{1}{\varphi}+2F\lambda_{1}^{-3}\frac{\partial F}{\partial \lambda_{i}}h_{11i}^{2}+\nabla_{m}\frac{F}{\varphi}\langle X, e_m \rangle\\
		&~+(\beta-1)F\lambda_{1}^{-1}+(1-\beta)F^{2}+C(F-\lambda_{1}^{-1}\frac{\partial F}{\partial h_{ij}}h_{jl}h_{li})\\
		&~+F\lambda_{1}^{-2}\frac{\partial^{2} F}{\partial h_{ij}\partial h_{st}}h_{ij1}h_{st1}+2F\lambda_{1}^{-2}\frac{\partial F}{\partial \lambda_{i}}\sum_{l>\mu}(\lambda_{l}-\lambda_{1})^{-1}h_{1li}^{2}.
		\end{align*}
    According to Proposition \ref{beqn} and the homogeneity of $F$, we have
    \begin{align*}
    &\quad -\frac{\beta-1}{\beta}\LF \frac{|X|^{2}}{2}+(\beta-1)F\lambda_{1}^{-1}+(1-\beta)F^{2}+C(F-\lambda_{1}^{-1}\frac{\partial F}{\partial h_{ij}}h_{jl}h_{li})\\
    &=\frac{\beta-1}{\beta}\frac{\partial F}{\partial\lambda_{i}}(\frac{\lambda_{i}}{\lambda_{1}}-1)-C\frac{\partial F}{\partial \lambda_{i}}\lambda_{i}(\frac{\lambda_{i}}{\lambda_{1}}-1),
    \end{align*}
    thus the proof is completed.	
	\end{proof}	
			
	\begin{lem}\label{deri0}
		At $x_{0}$, we have the following equalities
		\begin{align*}
		&(1)& \langle X, \nabla(\frac{F}{\varphi}) \rangle&=\frac{\beta-1}{\beta}\sum_{i}\lambda_{i}^{-2}(\nabla_{i}F)^{2},\\
		&(2)& \lambda_{1}^{-2}h_{11j}&= (\lambda_{1}^{-1}-\frac{\beta-1}{\beta}\lambda_{j}^{-1})\nabla_{j}\log F,~\text{for }1\leq j\leq n,\qquad\qquad\qquad\qquad\\
		&(3)& \nabla_{m}F&=0,~\text{for }2\leq m\leq \mu.
		\end{align*}
	\end{lem}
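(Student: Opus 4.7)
The plan is to extract the three identities from two inputs: the first-order critical point condition $\nabla W(x_0) = 0$, and Lemma \ref{bcd}. First I would pick an orthonormal frame $\{e_i\}$ at $x_0$ diagonalizing the second fundamental form, so that $h_{ij}(x_0) = \lambda_i\delta_{ij}$. In this frame, the identity \eqref{df} derived in the proof of Proposition \ref{beqn} reduces to $\nabla_j F = \lambda_j\langle X, e_j\rangle$ (no sum on $j$), equivalently $\langle X, e_j\rangle = \lambda_j^{-1}\nabla_j F$. Combined with $\frac{1}{2}\nabla_j|X|^2 = \langle X, e_j\rangle$, the condition $\nabla_j W(x_0) = 0$ reads
\[
\nabla_j\!\left(\frac{F}{\varphi}\right) = \frac{\beta-1}{\beta}\,\langle X, e_j\rangle \quad\text{at } x_0.
\]

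For (1), I would contract this critical-point relation with $\langle X, e_j\rangle$ and sum over $j$: this gives $\langle X, \nabla(F/\varphi)\rangle = \frac{\beta-1}{\beta}\sum_j \langle X, e_j\rangle^2$, and substituting $\langle X, e_j\rangle = \lambda_j^{-1}\nabla_j F$ yields exactly the claimed $\frac{\beta-1}{\beta}\sum_i \lambda_i^{-2}(\nabla_i F)^2$.

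For (2), I would expand $\nabla_j(F/\varphi) = \nabla_j F/\varphi - F\nabla_j\varphi/\varphi^2$. At $x_0$ we have $\varphi = \lambda_1$, and Lemma \ref{bcd}(i) with $k=l=1$ gives $h_{11j} = \delta_{11}\nabla_j\varphi = \nabla_j\varphi$. Substituting into the critical-point identity, isolating $Fh_{11j}/\lambda_1^2$, and dividing through by $F$ produces the stated formula after a single line of algebra.

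For (3), I would combine (2) with Codazzi. Lemma \ref{bcd}(i) with $k=1$, $l=m$ (for $2\leq m\leq\mu$) yields $h_{1mi} = \delta_{1m}\nabla_i\varphi = 0$, and full symmetry of $h_{ijk}$ then forces $h_{11m} = h_{1m1} = 0$. Feeding $j = m$ into (2), the left-hand side vanishes, while the bracket on the right, using $\lambda_m = \lambda_1$, collapses to $(\beta\lambda_1)^{-1} \neq 0$; hence $\nabla_m\log F = 0$, i.e., $\nabla_m F = 0$. The computation is essentially algebraic once the setup is in place, so I foresee no substantial obstacle; the only points requiring care are the choice of diagonalizing frame at $x_0$, correctly reading $\nabla_j\varphi = h_{11j}$ out of Lemma \ref{bcd}(i), and the Codazzi step that kills $h_{11m}$ for $2\leq m\leq\mu$.
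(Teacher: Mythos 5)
Your proposal is correct and follows essentially the same route as the paper: all three identities flow from $\nabla W(x_0)=0$ together with Lemma \ref{bcd}(i), with \eqref{df} providing $\langle X,e_j\rangle=\lambda_j^{-1}\nabla_j F$ at the diagonalizing frame. The only (harmless) cosmetic difference is that for (3) you derive $h_{11m}=0$ via $h_{1m1}=\delta_{1m}\nabla_1\varphi=0$ and Codazzi symmetry, whereas the paper simply cites Lemma \ref{bcd}; your version spells out the detail but is the same argument.
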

	
	\begin{proof}
		(1) Using $\nabla W=0$ and \eqref{df}, we have
		\begin{align*}
		\langle X, \nabla(\frac{F}{\varphi}) \rangle&=\langle X,\nabla W \rangle+\frac{\beta-1}{\beta}\sum_{m}\langle X,e_{m} \rangle^{2}\\
		&=\frac{\beta-1}{\beta}\sum_{i}\lambda_{i}^{-2}(\nabla_{i}F)^{2}.
		\end{align*}

		(2) Using $\nabla_{j}W=0$, Lemma \ref{bcd} and \eqref{df}, we have
		\begin{align*}
			0&=F\nabla_{j}\frac{1}{\varphi}+\frac{1}{\varphi}\nabla_{j}F-\frac{\beta-1}{\beta}\lambda_{j}^{-1}\nabla_{j}F\\
			&=-F\lambda_{1}^{-2}h_{11j}+(\lambda_{1}^{-1}-\frac{\beta-1}{\beta}\lambda_{j}^{-1})\nabla_{j}F.
		\end{align*}
		
		(3) By Lemma \ref{bcd}, we have $h_{11m}=0$ if $2\leq m\leq \mu$. Then,  (2) leads to (3).
	\end{proof}
	
	\begin{lem}\label{h1ij}
		\begin{align*}
			&\quad\frac{\partial^{2} F}{\partial h_{ij}\partial h_{st}}h_{ij1}h_{st1}+2\frac{\partial F}{\partial \lambda_{i}}\sum_{l>\mu}(\lambda_{l}-\lambda_{1})^{-1}h_{1li}^{2}\\
			&=\frac{\partial^{2}F}{\partial \lambda_{i}\partial \lambda_{j}}h_{ii1}h_{jj1}+2\sum_{i>\mu}\frac{\partial F}{\partial \lambda_{i}}(\lambda_{i}-\lambda_{1})^{-1}h_{11i}^{2}+2\sum_{i>\mu}\frac{\partial F}{\partial \lambda_{i}}(\lambda_{i}-\lambda_{1})^{-1}h_{1ii}^{2}\\
			&~+2\sum_{i>j>\mu}\frac{\frac{\partial F}{\partial \lambda_{i}}(\lambda_{i}-\lambda_{1})^{2}-\frac{\partial F}{\partial \lambda_{j}}(\lambda_{j}-\lambda_{1})^{2}}{(\lambda_{i}-\lambda_{1})(\lambda_{j}-\lambda_{1})(\lambda_{i}-\lambda_{j})} h_{ij1}^{2}.
		\end{align*}
	\end{lem}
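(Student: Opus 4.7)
The plan is to expand both sides in the orthonormal frame diagonalizing the second fundamental form at $x_{0}$ and then use Lemma \ref{bcd}~i) together with the Codazzi symmetry $h_{ijk}=h_{jki}=h_{ikj}$ to identify which components of $h_{ijk}$ vanish. Writing $F_{i}=\partial F/\partial\lambda_{i}$, Lemma \ref{andrews}~(ii) applied with $b_{ij}=h_{ij1}$ yields
\begin{equation*}
\frac{\partial^{2}F}{\partial h_{ij}\partial h_{st}}h_{ij1}h_{st1}=\frac{\partial^{2}F}{\partial\lambda_{i}\partial\lambda_{j}}h_{ii1}h_{jj1}+2\sum_{p<q}\frac{F_{p}-F_{q}}{\lambda_{p}-\lambda_{q}}h_{pq1}^{2},
\end{equation*}
so the diagonal piece already matches the first term on the RHS, and the work reduces to understanding the off-diagonal sum together with the extra term $2F_{i}\sum_{l>\mu}(\lambda_{l}-\lambda_{1})^{-1}h_{1li}^{2}$.

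I then split every index range by whether it lies in $\{1,\dots,\mu\}$ or in $\{\mu+1,\dots,n\}$. Lemma \ref{bcd}~i) forces $h_{pq1}=0$ for $p\neq q$ with $p,q\leq\mu$, and via Codazzi also $h_{pq1}=h_{1pq}=\nabla_{q}\varphi\,\delta_{1p}=0$ whenever $1<p\leq\mu<q$. In the off-diagonal sum only the cases $p=1\leq\mu<q$ and $\mu<p<q$ survive, contributing $2\sum_{q>\mu}\frac{F_{1}-F_{q}}{\lambda_{1}-\lambda_{q}}h_{11q}^{2}$ and $2\sum_{\mu<p<q}\frac{F_{p}-F_{q}}{\lambda_{p}-\lambda_{q}}h_{1pq}^{2}$. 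The same vanishing argument, applied to the second LHS term, kills the $1<i\leq\mu$ slice (there $h_{1li}=h_{1il}=\nabla_{l}\varphi\,\delta_{1i}=0$), leaving the $i=1$ contribution $2F_{1}\sum_{l>\mu}(\lambda_{l}-\lambda_{1})^{-1}h_{11l}^{2}$ together with $2\sum_{i,l>\mu}F_{i}(\lambda_{l}-\lambda_{1})^{-1}h_{1li}^{2}$.

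Now I combine matching-type terms. The two pieces involving $h_{11q}^{2}$ (with $q>\mu$) collapse via the one-line identity $\frac{F_{1}-F_{q}}{\lambda_{1}-\lambda_{q}}+\frac{F_{1}}{\lambda_{q}-\lambda_{1}}=\frac{F_{q}}{\lambda_{q}-\lambda_{1}}$, producing the $h_{11i}^{2}$ sum on the RHS. Splitting $\sum_{i,l>\mu}$ into $l=i$ and $l\neq i$ parts, the diagonal piece produces the $h_{1ii}^{2}$ sum on the RHS directly, while the off-diagonal piece, after using $h_{1ab}=h_{1ba}$ and pairing $(i,l)=(a,b)$ with $(i,l)=(b,a)$, joins the surviving $h_{1pq}^{2}$ sum to yield the coefficient
\begin{equation*}
\frac{F_{a}-F_{b}}{\lambda_{a}-\lambda_{b}}+\frac{F_{a}}{\lambda_{b}-\lambda_{1}}+\frac{F_{b}}{\lambda_{a}-\lambda_{1}}
\end{equation*}
on each $h_{1ab}^{2}$ with $a>b>\mu$.

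The only genuinely algebraic step is to telescope this three-term sum into the RHS expression. Clearing the common denominator $(\lambda_{a}-\lambda_{b})(\lambda_{a}-\lambda_{1})(\lambda_{b}-\lambda_{1})$, grouping the $F_{a}$ and $F_{b}$ contributions separately, and applying $(\lambda_{a}-\lambda_{1})-(\lambda_{b}-\lambda_{1})=\lambda_{a}-\lambda_{b}$, one finds the numerator collapses cleanly to $F_{a}(\lambda_{a}-\lambda_{1})^{2}-F_{b}(\lambda_{b}-\lambda_{1})^{2}$, which is exactly the factor appearing in the last sum on the RHS. This algebraic collapse is the main (and essentially only) obstacle in the proof; everything else is index bookkeeping driven by Codazzi and Lemma \ref{bcd}~i).
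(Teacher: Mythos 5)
Your proof is correct and follows essentially the same route as the paper: both expand the Hessian term via Lemma~\ref{andrews}~(ii), discard the off-diagonal components of $h_{\cdot\cdot 1}$ that Lemma~\ref{bcd}~i) and Codazzi symmetry force to vanish, split the surviving index ranges, and telescope the three resulting fractional coefficients into $\frac{F_a(\lambda_a-\lambda_1)^2 - F_b(\lambda_b-\lambda_1)^2}{(\lambda_a-\lambda_1)(\lambda_b-\lambda_1)(\lambda_a-\lambda_b)}$. The paper is merely more compressed --- it writes out the two expansions and says the lemma follows by adding them, leaving the final algebraic collapse implicit, whereas you spell out that step.
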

	
	\begin{proof}
	Due to
		\begin{align*}
		\frac{\partial^{2} F}{\partial h_{ij}\partial h_{st}}h_{ij1}h_{st1}
		&=\frac{\partial^{2}F}{\partial \lambda_{i}\partial \lambda_{j}}h_{ii1}h_{jj1}+2\sum_{i>j}(\lambda_{i}-\lambda_{j})^{-1}(\frac{\partial F}{\partial \lambda_{i}}-\frac{\partial F}{\partial \lambda_{j}})h_{ij1}^{2}\\
		&=\frac{\partial^{2}F}{\partial \lambda_{i}\partial \lambda_{j}}h_{ii1}h_{jj1}+2\sum_{i>\mu}(\lambda_{i}-\lambda_{1})^{-1}(\frac{\partial F}{\partial \lambda_{i}}-\frac{\partial F}{\partial \lambda_{1}})h_{11i}^{2}\\
		&~+2\sum_{i>j>\mu}(\lambda_{i}-\lambda_{j})^{-1}(\frac{\partial F}{\partial \lambda_{i}}-\frac{\partial F}{\partial \lambda_{j}})h_{ij1}^{2}
		\end{align*}
		and
		\begin{align*}
			2\frac{\partial F}{\partial \lambda_{i}}\sum_{l>\mu}(\lambda_{l}-\lambda_{1})^{-1}h_{1li}^{2}
			&=2\frac{\partial F}{\partial \lambda_{1}}\sum_{l>\mu}(\lambda_{l}-\lambda_{1})^{-1}h_{11l}^{2}+2\sum_{i>\mu}\frac{\partial F}{\partial \lambda_{i}}(\lambda_{i}-\lambda_{1})^{-1}h_{1ii}^{2}\\
			&~+2\sum_{i>l>\mu}\frac{\partial F}{\partial \lambda_{i}}(\lambda_{l}-\lambda_{1})^{-1}h_{1li}^{2}+2\sum_{l>i>\mu}\frac{\partial F}{\partial \lambda_{i}}(\lambda_{l}-\lambda_{1})^{-1}h_{1li}^{2},
		\end{align*}
    the lemma follows by adding the above two equations.

	\end{proof}

	\begin{lem}\label{west}
		For $\beta\geq 1$, at $x_{0}$, $W$ satisfies the following inequality
		\begin{align*}
		\LF W\geq J_{1}+J_{2}+J_{3},
		\end{align*}
		where
		$$J_{1}=\frac{\beta-1}{\beta}\frac{\partial F}{\partial\lambda_{i}}(\frac{\lambda_{i}}{\lambda_{1}}-1)-C\frac{\partial F}{\partial \lambda_{i}}\lambda_{i}(\frac{\lambda_{i}}{\lambda_{1}}-1),$$
		$$J_{2}=2F\lambda_{1}^{-2}\sum_{i>j>\mu}\frac{\frac{\partial F}{\partial \lambda_{i}}(\lambda_{i}-\lambda_{1})^{2}-\frac{\partial F}{\partial \lambda_{j}}(\lambda_{j}-\lambda_{1})^{2}}{(\lambda_{i}-\lambda_{1})(\lambda_{j}-\lambda_{1})(\lambda_{i}-\lambda_{j})} h_{ij1}^{2}$$
		and
		\begin{align*}
			J_{3}&= \frac{\beta-1}{\beta}\lambda_{1}^{-1}\Big(\lambda_{1}^{-1}-\frac{2}{\beta}F^{-1}\frac{\partial F}{\partial \lambda	 _{1}}\Big)(\nabla_{1}F)^{2}+2F\lambda_{1}^{-2}\frac{\partial F}{\partial \lambda_{i}}\sum_{i>\mu}(\lambda_{i}-\lambda_{1})^{-1}h_{1ii}^{2}\\
			&~+F\lambda_{1}^{-2}\frac{\partial^{2}F}{\partial \lambda_{i}\partial \lambda_{j}}h_{ii1}h_{jj1}.
		\end{align*}
	\end{lem}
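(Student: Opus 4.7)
The plan is to start from the lower bound for $\LF W$ established in the preceding lemma and simplify its six summands in turn. The first-order identities at the critical point $x_0$ from Lemma \ref{deri0} will diagonalize the gradient-type terms, Lemma \ref{h1ij} will convert the Hessian-type terms into the precise form demanded by $J_2$ and $J_3$, and a single positivity check (the only place the hypothesis $\beta\geq 1$ is actually used) will allow us to discard the leftover.

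Concretely, I first apply Lemma \ref{h1ij} to rewrite $F\lambda_1^{-2}\frac{\partial^2 F}{\partial h_{ij}\partial h_{st}}h_{ij1}h_{st1}+2F\lambda_1^{-2}\frac{\partial F}{\partial\lambda_i}\sum_{l>\mu}(\lambda_l-\lambda_1)^{-1}h_{1li}^{2}$. This immediately isolates $J_2$, the last summand of $J_3$ (the $\frac{\partial^2 F}{\partial\lambda_i\partial\lambda_j}h_{ii1}h_{jj1}$ piece) and the middle summand of $J_3$ (the $h_{1ii}^2$ sum over $i>\mu$), leaving a residual $R:=2F\lambda_1^{-2}\sum_{i>\mu}\frac{\partial F}{\partial\lambda_i}(\lambda_i-\lambda_1)^{-1}h_{11i}^2$ to be absorbed into the gradient terms. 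Next, working in an eigenbasis of $h$ and using $\langle X,e_j\rangle=\lambda_j^{-1}\nabla_j F$ from \eqref{df}, the equation $\nabla_j W=0$ yields
$$F\,\nabla_j\frac{1}{\varphi}=\Big(\frac{\beta-1}{\beta}\lambda_j^{-1}-\lambda_1^{-1}\Big)\nabla_j F,$$
which, combined with Lemma \ref{deri0}(2) (the formula $h_{11j}=\lambda_1^2(\lambda_1^{-1}-\frac{\beta-1}{\beta}\lambda_j^{-1})F^{-1}\nabla_j F$) and Lemma \ref{deri0}(3) ($\nabla_m F=0$ for $2\leq m\leq\mu$), transforms $\langle X,\nabla(F/\varphi)\rangle$ (via Lemma \ref{deri0}(1)), $2\frac{\partial F}{\partial h_{ij}}\nabla_i F\nabla_j\frac{1}{\varphi}$, $2F\lambda_1^{-3}\frac{\partial F}{\partial\lambda_i}h_{11i}^2$, and $R$ into a single diagonal quadratic form in $\nabla_j F$ supported on $j=1$ and $j>\mu$.

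It then remains to match coefficients. For $(\nabla_1 F)^2$ the four contributions combine, using $1-\frac{\beta-1}{\beta}=\frac{1}{\beta}$, to exactly $\frac{\beta-1}{\beta\lambda_1}\big(\lambda_1^{-1}-\frac{2}{\beta F}\frac{\partial F}{\partial\lambda_1}\big)$, reproducing the first summand of $J_3$. For each $i>\mu$, after writing $\lambda_1^{-1}-\frac{\beta-1}{\beta}\lambda_i^{-1}=\frac{\beta\lambda_i-(\beta-1)\lambda_1}{\beta\lambda_1\lambda_i}$ and simplifying, the coefficient of $(\nabla_i F)^2$ collapses to
$$\frac{\beta-1}{\beta\lambda_i^2}+\frac{2}{\beta^2 F}\,\frac{\partial F}{\partial\lambda_i}\,\frac{\beta\lambda_i-(\beta-1)\lambda_1}{\lambda_i(\lambda_i-\lambda_1)},$$
which is non-negative under $\beta\geq 1$ since $F>0$, $\frac{\partial F}{\partial\lambda_i}>0$ by Condition \ref{condtn}(i), $\lambda_i>\lambda_1>0$, and $\beta\lambda_i-(\beta-1)\lambda_1=\lambda_1+\beta(\lambda_i-\lambda_1)>0$. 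Discarding this non-negative leftover yields the claimed inequality $\LF W\geq J_1+J_2+J_3$.

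The main obstacle is organizational rather than conceptual: four distinct gradient contributions (from $\langle X,\nabla(F/\varphi)\rangle$, from the mixed term $2\frac{\partial F}{\partial h_{ij}}\nabla_iF\nabla_j(1/\varphi)$, from $2F\lambda_1^{-3}\frac{\partial F}{\partial\lambda_i}h_{11i}^2$, and from the residual $R$) must be regrouped as a single diagonal quadratic form in $\nabla_j F$, and the $j=1$ coefficient must telescope exactly to the somewhat delicate expression appearing in the first summand of $J_3$. The assumption $\beta\geq 1$ is invoked only at the final step, to ensure the $i>\mu$ coefficient is non-negative and can be discarded.
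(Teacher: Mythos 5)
Your proposal follows exactly the paper's own route: feed the output of the preceding lemma through Lemma \ref{h1ij} to peel off $J_2$ and the last two summands of $J_3$ (leaving the residual $R$), then use Lemma \ref{deri0} and $\nabla W=0$ at $x_0$ to reorganize the gradient contributions together with $R$ into a diagonal quadratic form in $\nabla_j F$, matching the $j=1$ coefficient to the first summand of $J_3$ and discarding the nonnegative $j>\mu$ part. The simplified $i>\mu$ coefficient you record, $\frac{\beta-1}{\beta\lambda_i^2}+\frac{2}{\beta^2 F}\frac{\partial F}{\partial\lambda_i}\frac{\beta\lambda_i-(\beta-1)\lambda_1}{\lambda_i(\lambda_i-\lambda_1)}$, is algebraically identical to the parenthesized expression the paper leaves in unsimplified form, and your positivity justification is the same one-line observation the paper relies on.
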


	\begin{proof}
		By Lemma \ref{deri0}, we have
		\begin{align*}
		&\quad\langle X, \nabla(\frac{F}{\varphi}) \rangle+2\frac{\partial F}{\partial h_{ij}}\nabla_{i}F\nabla_{j}\frac{1}{\varphi}+2F\lambda_{1}^{-3}\frac{\partial F}{\partial \lambda_{i}}h_{11i}^{2}\\
		&=\frac{\beta-1}{\beta}\sum_{i}\lambda_{i}^{-2}(\nabla_{i}F)^{2}-2F^{-1}\frac{\partial F}{\partial \lambda	 _{i}}(\lambda_{1}^{-1}-\frac{\beta-1}{\beta}\lambda_{i}^{-1})(\nabla_{i}F)^{2}+2F\lambda_{1}^{-3}\frac{\partial F}{\partial \lambda_{i}}h_{11i}^{2}\\
		&=\frac{\beta-1}{\beta}\lambda_{1}^{-1}\Big(\lambda_{1}^{-1}-\frac{2}{\beta}F^{-1}\frac{\partial F}{\partial \lambda	 _{1}}\Big)(\nabla_{1}F)^{2}+\sum_{i>\mu}\Big(\frac{\beta-1}{\beta}\lambda_{i}^{-2}\\
		&~-\frac{2(\beta-1)}{\beta}F^{-1}\frac{\partial F}{\partial \lambda	 _{i}}\lambda_{1}\lambda_{i}^{-1}(\lambda_{1}^{-1}-\frac{\beta-1}{\beta}\lambda_{i}^{-1})\Big)(\nabla_{i}F)^{2}.
		\end{align*}
		
		Furthermore, by Lemma \ref{deri0} and Lemma \ref{h1ij}, we have
		\begin{align*}
			\LF W&\geq \frac{\beta-1}{\beta}\lambda_{1}^{-1}\Big(\lambda_{1}^{-1}-\frac{2}{\beta}F^{-1}\frac{\partial F}{\partial \lambda	 _{1}}\Big)(\nabla_{1}F)^{2}\\
			&~+\sum_{i>\mu}\Big\{\frac{\beta-1}{\beta}\lambda_{i}^{-2}+\frac{2}{\beta} F^{-1}\frac{\partial F}{\partial \lambda_i}\Big(\lambda_{i}^{-1}+\frac{1}{\beta}\lambda_{1}\lambda_{i}^{-2}+\frac{\lambda_{1}^{2}}{\beta\lambda_{i}^{2}(\lambda_{i}-\lambda_{1})}\Big)\Big\}(\nabla_{i}F)^{2}\\
			&~+2F\lambda_{1}^{-2}\sum_{i>j>\mu}\frac{\frac{\partial F}{\partial \lambda_{i}}(\lambda_{i}-\lambda_{1})^{2}-\frac{\partial F}{\partial \lambda_{j}}(\lambda_{j}-\lambda_{1})^{2}}{(\lambda_{i}-\lambda_{1})(\lambda_{j}-\lambda_{1})(\lambda_{i}-\lambda_{j})} h_{ij1}^{2}+2F\lambda_{1}^{-2}\frac{\partial F}{\partial \lambda_{i}}\sum_{i>\mu}(\lambda_{i}-\lambda_{1})^{-1}h_{1ii}^{2}\\
			&~+F\lambda_{1}^{-2}\frac{\partial^{2}F}{\partial \lambda_{i}\partial \lambda_{j}}h_{ii1}h_{jj1}+\frac{\beta-1}{\beta}\frac{\partial F}{\partial\lambda_{i}}(\frac{\lambda_{i}}{\lambda_{1}}-1)-C\frac{\partial F}{\partial \lambda_{i}}\lambda_{i}(\frac{\lambda_{i}}{\lambda_{1}}-1).
		\end{align*}
        Noticing the second term is nonnegative, we finish the proof.		
		
	\end{proof}

    \begin{lem}\label{wmaxg}
        Suppose that $F$ satisfies Condition \ref{condtn}. For $\beta>1$ and $C\leq 0$, the maximum point of $\tilde{W}$ is umbilic.
    \end{lem}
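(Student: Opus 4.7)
The plan is to apply the maximum principle to $W$ at $x_0$ and combine it with the lower bound $\LF W\geq J_1+J_2+J_3$ from Lemma \ref{west}. Since $W$ attains a maximum at $x_0$, we have $\LF W(x_0)\leq 0$, so $J_1+J_2+J_3\leq 0$ at $x_0$; if I can show each $J_i\geq 0$, then all three vanish, and the vanishing of $J_1$ will force umbilicity.

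The signs of $J_1$ and $J_2$ are the easy parts. Setting $F_i:=\partial F/\partial\lambda_i$, Condition \ref{condtn} i) gives $F_i>0$; since $\beta>1$, $C\leq 0$, and $\lambda_i\geq\lambda_1>0$, every summand of $J_1$ equals
\[
F_i\!\left(\frac{\lambda_i}{\lambda_1}-1\right)\!\left(\frac{\beta-1}{\beta}-C\lambda_i\right)\geq 0,
\]
with the second bracket strictly positive, so $J_1=0$ forces $\lambda_i=\lambda_1$ for every $i$, which is umbilicity. The non-negativity of $J_2$ is exactly Lemma \ref{lamij+}.

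The real work is $J_3\geq 0$, and this is where the key inequality of Condition \ref{condtn} iv) enters. Setting $y_i=h_{ii1}$ in Condition iv), converting $\log F$-derivatives into $F$-derivatives, and using $\sum_iF_ih_{ii1}=\nabla_1F$, one obtains
\[
\sum_{i,j}F_{ij}h_{ii1}h_{jj1}\ \geq\ \frac{(\nabla_1F)^2}{F}-\sum_i\frac{F_i}{\lambda_i}h_{ii1}^{2}.
\]
I plan to substitute this into the $F\lambda_1^{-2}\sum_{i,j}F_{ij}h_{ii1}h_{jj1}$ term of $J_3$ and split $\sum_i=\sum_{i\leq\mu}+\sum_{i>\mu}$. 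For $i\leq\mu$, Lemma \ref{bcd}(i) gives $h_{ii1}=\nabla_1\varphi=h_{111}$, and Lemma \ref{deri0}(2) at $j=1$ yields $h_{111}=\lambda_1(\nabla_1F)/(\beta F)$; together with $F_i=F_1$ (which follows from $\lambda_i=\lambda_1$ and the symmetry of $F$), the $i\leq\mu$ portion collapses into a multiple of $(\nabla_1F)^2$. For $i>\mu$, the $h_{ii1}^{2}=h_{1ii}^{2}$ terms combine with the middle term $2F\lambda_1^{-2}\sum_{i>\mu}F_i(\lambda_i-\lambda_1)^{-1}h_{1ii}^{2}$ of $J_3$ into an expression with coefficient proportional to $F_i(\lambda_i+\lambda_1)/[\lambda_i(\lambda_i-\lambda_1)]>0$, which is manifestly non-negative.

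What remains is a coefficient of $(\nabla_1F)^2$ of the form
\[
\frac{2\beta-1}{\beta\lambda_1^{2}}-\frac{(2\beta-2+\mu)F_1}{\beta^{2}\lambda_1 F},
\]
and the main obstacle is to show this is non-negative. I plan to bound $\lambda_1F_1/F$ via Euler's identity $\sum_i\lambda_iF_i=\beta F$ combined with Condition \ref{condtn} iii), which gives $\lambda_iF_i\geq\lambda_1F_1$ for every $i$ (since $\lambda_i\geq\lambda_1$), and therefore $\beta F\geq\mu\lambda_1F_1$, i.e.\ $\lambda_1F_1/F\leq\beta/\mu$. Plugging this in and clearing denominators reduces the required inequality to $2(\beta-1)(\mu-1)\geq 0$, which holds for $\beta\geq 1$ and $\mu\geq 1$. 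Hence $J_3\geq 0$, and together with $J_1,J_2\geq 0$ and $J_1+J_2+J_3\leq 0$ I conclude $J_1=0$, so by the analysis of $J_1$ above $x_0$ is umbilic.
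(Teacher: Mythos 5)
Your proof is correct and follows the paper's approach: show $J_1\geq 0$ with equality forcing umbilicity, $J_2\geq 0$ via Lemma~\ref{lamij+}, and $J_3\geq 0$ via the key inequality of Condition~\ref{condtn}~iv). The one place you diverge is in the bookkeeping of the $(\nabla_1 F)^2$ coefficient: you carry the multiplicity $\mu$ through explicitly and close the argument with $2(\beta-1)(\mu-1)\geq 0$, whereas the paper retains only the $i=1$ term of the negative sum $\sum_{i\le\mu}\lambda_i^{-1}\tfrac{\partial\log F}{\partial\lambda_i}h_{ii1}^2$. That step is justified, but only because $h_{ii1}=0$ for all $i\le\mu$ whenever $\mu\geq 2$ (combine Lemma~\ref{bcd}(i) with the Codazzi symmetry $h_{jj1}=h_{1jj}$ to get $\nabla_1\varphi=0$), a fact the paper does not state explicitly. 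Your explicit $\mu$-accounting sidesteps that subtlety and is, if anything, more transparent; both arrive at the same conclusion.
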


    \begin{proof}
    For $\frac{\partial F}{\partial \lambda_{i}}>0$ and $\frac{\lambda_{i}}{\lambda_{1}}\geq 1$, we know $J_{1}\geq 0$ and $J_{1}=0$ if and only if $\lambda_{1}=\cdots=\lambda_{n}$. By Lemma \ref{lamij+}, we have $J_{2}\geq 0$.

 Observe that
    \begin{align*}
    J_{3}
    &=\frac{\beta-1}{\beta}F^{2}\lambda_{1}^{-1}\Big(\lambda_{1}^{-1}-\frac{2}{\beta}\frac{\partial \log F}{\partial \lambda_{1}}\Big)(\nabla_{1}\log F)^{2}+2F^{2}\lambda_{1}^{-2}\frac{\partial\log F}{\partial \lambda_{i}}\sum_{i>\mu}(\lambda_{i}-\lambda_{1})^{-1}h_{1ii}^{2}\\
    &~+F^{2}\lambda_{1}^{-2}\frac{\partial^{2}\log F}{\partial \lambda_{i}\partial \lambda_{j}}h_{ii1}h_{jj1}+F^{2}\lambda_{1}^{-2}(\nabla_{1}\log F)^{2}\\
    &\geq \frac{\beta-1}{\beta}F^{2}\lambda_{1}^{-1}\Big(\lambda_{1}^{-1}-\frac{2}{\beta}\frac{\partial \log F}{\partial \lambda_{1}}\Big)(\nabla_{1}\log F)^{2}+2F^{2}\lambda_{1}^{-2}\frac{\partial\log F}{\partial \lambda_{i}}\sum_{i>\mu}(\lambda_{i}-\lambda_{1})^{-1}h_{1ii}^{2}\\
    &~-F^{2}\lambda_{1}^{-2}\lambda_{i}^{-1}\frac{\partial\log F}{\partial\lambda_{i}}h_{ii1}^{2}+F^{2}\lambda_{1}^{-2}(\nabla_{1}\log F)^{2}\\
    &\geq \frac{\beta-1}{\beta}F^{2}\lambda_{1}^{-1}\Big(\lambda_{1}^{-1}-\frac{2}{\beta}\frac{\partial \log F}{\partial \lambda_{1}}\Big)(\nabla_{1}\log F)^{2}-F^{2}\lambda_{1}^{-3}\frac{\partial\log F}{\partial\lambda_{1}}h_{111}^{2}\\
    &~+F^{2}\lambda_{1}^{-2}(\nabla_{1}\log F)^{2},
    \end{align*}
    where we use the key inequality in iv) of Condition \ref{condtn}  for the above  first inequality.

    Using Lemma \ref{deri0}, we have
    \begin{align*}
    J_{3}&\geq \frac{\beta-1}{\beta}F^{2}\lambda_{1}^{-1}\Big(\lambda_{1}^{-1}-\frac{2}{\beta}\frac{\partial \log F}{\partial \lambda_{1}}\Big)(\nabla_{1}\log F)^{2}-\frac{1}{\beta^{2}}F^{2}\lambda_{1}^{-1}\frac{\partial\log F}{\partial\lambda_{1}}(\nabla_{1}\log F)^{2}\\
    &~+F^{2}\lambda_{1}^{-2}(\nabla_{1}\log F)^{2}\\
    &=\frac{2\beta-1}{\beta}F\lambda_{1}^{-2}\Big(F-\frac{1}{\beta}\frac{\partial F}{\partial \lambda_{1}}\lambda_{1}\Big)(\nabla_{1}\log F)^{2}.
    \end{align*}
    Since $F=\sum_{i}\frac{1}{\beta}\frac{\partial F}{\partial \lambda_{i}}\lambda_{i}$, we know $J_{3}\geq 0$.
    For $\LF$ is an elliptic operator, at the maximum point $x_0$ of $W$, we have
    $$0\geq \LF W\geq J_{1}+J_{2}+J_{3}\geq 0.$$
    Thus $J_{1}=0$, which implies $\lambda_{1}=\cdots=\lambda_{n}$ at $x_0$.
    Since 
    $x_0$ is the maximum point of $\tilde{W}$,
    we finish the proof.

    \end{proof}

\section{Proof of Theorem \ref{thmg}}
\label{Sec:thmg}
	In this section, by considering the quantity
	$$Z=F\tr b-\frac{n(\beta-1)}{2\beta}|X|^{2},$$
	we will prove Theorem \ref{thmg}. 	
	\begin{lem}\label{z}
		\begin{align*}
		\LF Z+R(\nabla Z)&=L_{1}+L_{2}+L_{3},
		\end{align*}
		where $R(\nabla Z)$ denotes the terms containing $\nabla Z$,
		$$L_{1}=(\beta-1)F\tr b-\frac{n(\beta-1)}{\beta}\sum_{i}\frac{\partial F}{\partial \lambda_{i}}+C(n\beta F-\tr b\frac{\partial F}{\partial \lambda_{i}}\lambda_{i}^{2}),$$
		$$L_{2}=\Big(\frac{n(\beta-1)}{\beta}\lambda_{i}^{-1}(2F^{-1}\frac{\partial F}{\partial \lambda_{i}}+\lambda_{i}^{-1})-2F^{-1}\frac{\partial F}{\partial \lambda_{i}}\tr b\Big)(\nabla_{i}F)^{2}$$
		and
		\begin{align*}
		L_{3}&=2F\frac{\partial F}{\partial \lambda_{i}}\lambda_{p}^{-2}\lambda_{q}^{-1}h_{pqi}^{2}+F\lambda_{p}^{-2}\frac{\partial^{2}F}{\partial \lambda_{i}\partial \lambda_{j}}h_{iip}h_{jjp}\\
		&~+F\lambda_{p}^{-2}\sum_{i\neq j}(\frac{\partial F}{\partial \lambda_{i}}-\frac{\partial F}{\partial \lambda_{j}})(\lambda_{i}-\lambda_{j})^{-1}h_{ijp}^{2}.
		\end{align*}
	\end{lem}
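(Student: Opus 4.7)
The plan is to expand $\LF Z$ via items (4) and (5) of Proposition \ref{beqn} and then reorganize the resulting expression into the three kinds of terms on the right-hand side: zeroth-order algebraic terms ($L_1$), terms quadratic in $\nabla F$ ($L_2$), and terms quadratic in the covariant derivatives $h_{ijk}$ of the second fundamental form ($L_3$), with all the remaining first-order derivative terms absorbed into $R(\nabla Z)$. Throughout I work in a frame diagonalizing $h$ (so $b$ is diagonal as well), and I rely on the basic identity $\nabla_i Z = \bigl(\tr b - \tfrac{n(\beta-1)}{\beta}\lambda_i^{-1}\bigr)\nabla_i F + F\,\nabla_i \tr b$, which follows from differentiating $Z = F\tr b - \tfrac{n(\beta-1)}{2\beta}|X|^2$ together with $\langle X,e_i\rangle = \lambda_i^{-1}\nabla_i F$ (a consequence of \eqref{df}).

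Writing $\LF Z = \LF(F\tr b) - \tfrac{n(\beta-1)}{\beta}\LF\tfrac{|X|^2}{2}$ and substituting (4) and (5), I observe that the $\pm n(\beta-1)F^2$ contributions cancel. The surviving algebraic pieces are $(\beta-1)F\tr b$, $-\tfrac{n(\beta-1)}{\beta}\sum_i \tfrac{\partial F}{\partial \lambda_i}$, and a combined $C$-term $C\bigl(n\beta F - \tr b\sum_i \tfrac{\partial F}{\partial \lambda_i}\lambda_i^2\bigr)$ (using Lemma \ref{andrews}(i) to rewrite $\tfrac{\partial F}{\partial h_{ij}}h_{jl}h_{li}$ in diagonal form). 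These assemble precisely into $L_1$.

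For the first-order terms I rewrite $\langle X,\nabla(F\tr b)\rangle = \langle X,\nabla Z\rangle + \tfrac{n(\beta-1)}{\beta}\sum_i \lambda_i^{-2}(\nabla_i F)^2$ via the tangential identity for $X$, and rewrite $2\tfrac{\partial F}{\partial h_{ij}}\nabla_i F\nabla_j\tr b = 2\sum_i \tfrac{\partial F}{\partial \lambda_i}\nabla_i F\nabla_i\tr b$ by solving the basic identity above for $\nabla_i\tr b$. The resulting $\nabla_i Z$ contributions are absorbed into $R(\nabla Z)$, while the residual $(\nabla_i F)^2$ pieces combine, summand by summand, with coefficient $\tfrac{n(\beta-1)}{\beta}\lambda_i^{-2} - 2F^{-1}\tfrac{\partial F}{\partial \lambda_i}\tr b + \tfrac{2n(\beta-1)}{\beta}F^{-1}\tfrac{\partial F}{\partial \lambda_i}\lambda_i^{-1}$, which is exactly the coefficient appearing in $L_2$.

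Finally, for the quadratic-in-$h_{ijk}$ part of (4), I simplify in the diagonalizing frame using $b^{kp}b^{qk} = \lambda_p^{-2}\delta_{pq}$ and $b^{ks}b^{pt}b^{kq} = \lambda_s^{-2}\lambda_p^{-1}\delta_{sq}\delta_{pt}$, and expand the Hessian factor via Lemma \ref{andrews}(ii) to produce both the diagonal Hessian contribution $\tfrac{\partial^2 F}{\partial \lambda_i\partial\lambda_j}h_{iip}h_{jjp}$ and the off-diagonal $\bigl(\tfrac{\partial F}{\partial \lambda_i}-\tfrac{\partial F}{\partial \lambda_j}\bigr)/(\lambda_i-\lambda_j)\,h_{ijp}^2$ terms present in $L_3$; the cubic-in-$b$ term contributes the $2F\tfrac{\partial F}{\partial \lambda_i}\lambda_p^{-2}\lambda_q^{-1}h_{pqi}^2$ summand. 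No step is conceptually deep; the principal obstacle is purely organizational, namely to ensure that every first-order term is unambiguously allocated either to $R(\nabla Z)$ or to $L_2$ so that no spurious cross term between $\nabla_i Z$ and $\nabla_i F$ survives.
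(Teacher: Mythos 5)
Your proposal is correct and is essentially the paper's own proof: expand $\LF Z$ via Proposition \ref{beqn}(4)--(5), observe the $\pm n(\beta-1)F^2$ cancellation, absorb the $\nabla Z$ pieces using the differentiated form of $Z$ together with $\langle X,e_i\rangle=\lambda_i^{-1}\nabla_i F$, and convert the Hessian term via Lemma \ref{andrews} in the diagonalizing frame. The bookkeeping you describe (splitting first-order terms between $R(\nabla Z)$ and $L_2$, and the diagonal simplifications $b^{kp}b^{qk}=\lambda_p^{-2}\delta_{pq}$, $b^{ks}b^{pt}b^{kq}=\lambda_s^{-2}\lambda_p^{-1}\delta_{sq}\delta_{pt}$) matches the paper's computation precisely.
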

	
	\begin{proof}
		By Proposition \ref{beqn}, we have
		\begin{align*}
			\LF Z&=\langle X, \nabla (F\tr b) \rangle+(\beta-1)F\tr b-\frac{n(\beta-1)}{\beta}\sum_{i}\frac{\partial F}{\partial h_{ii}}\\
			&~+C(n\beta F-\tr b\frac{\partial F}{\partial h_{ij}}h_{jl}h_{li})+2\frac{\partial F}{\partial h_{ij}}\nabla_{i}F\nabla_{j}\tr b\\
			&~+Fb^{kp}b^{qk}\frac{\partial^{2} F}{\partial h_{ij}\partial h_{st}}h_{ijp}h_{stq}+2Fb^{ks}b^{pt}b^{kq}\frac{\partial F}{\partial h_{ij}}h_{sti}h_{pqj}.
		\end{align*}
		
		From $$\nabla_{j}Z=\tr b\nabla_{j}F+F\nabla_{j}\tr b-\frac{n(\beta-1)}{\beta}\langle X,e_{j}\rangle,$$
		we have
			
		\begin{align*}
		\langle X, \nabla(F\tr b)\rangle&=\nabla_{j}(F\tr b)\langle X, e_j \rangle=\nabla_{j}Z\langle X, e_j \rangle+\frac{n(\beta-1)}{\beta}\sum_{j}\langle X,e_{j}\rangle^{2}\\
		&=\nabla_{j}Z\langle X, e_j \rangle+\frac{n(\beta-1)}{\beta}\lambda_{j}^{-2}(\nabla_{j}F)^{2}，
		\end{align*}
	
		and
		\begin{equation}\label{deri0z}
		\begin{aligned}
		\nabla_{j}\tr b&=F^{-1}(\nabla_{j}Z-\tr b\nabla_{j}F+\frac{n(\beta-1)}{\beta}\lambda_{j}^{-1}\nabla_{j}F)\\
		&=F^{-1}\nabla_{j}Z+F^{-1}(-\tr b+\frac{n(\beta-1)}{\beta}\lambda_{j}^{-1})\nabla_{j}F.
		\end{aligned}
		\end{equation}
		
		Then, by Lemma \ref{andrews}, we obtain
		\begin{align*}
		\LF Z+R(\nabla Z)
		&=(\beta-1)F\tr b-\frac{n(\beta-1)}{\beta}\sum_{i}\frac{\partial F}{\partial \lambda_{i}}+C(n\beta F-\tr b\frac{\partial F}{\partial \lambda_{i}}\lambda_{i}^{2})\\
		&~+\Big(\frac{n(\beta-1)}{\beta}\lambda_{i}^{-1}(2F^{-1}\frac{\partial F}{\partial \lambda_{i}}+\lambda_{i}^{-1})-2F^{-1}\frac{\partial F}{\partial \lambda_{i}}\tr b\Big)(\nabla_{i}F)^{2}\\
		&~+2F\frac{\partial F}{\partial \lambda_{i}}\lambda_{p}^{-2}\lambda_{q}^{-1}h_{pqi}^{2}+F\lambda_{p}^{-2}\frac{\partial^{2}F}{\partial \lambda_{i}\partial \lambda_{j}}h_{iip}h_{jjp}\\
		&~+F\lambda_{p}^{-2}\sum_{i\neq j}(\frac{\partial F}{\partial \lambda_{i}}-\frac{\partial F}{\partial \lambda_{j}})(\lambda_{i}-\lambda_{j})^{-1}h_{ijp}^{2}.
		\end{align*}
	\end{proof}

    \begin{lem}
        Suppose that $F$ satisfies Condition \ref{condtn}. For $\beta \geq 1$ and $C\leq 0$, $L_{1}\geq 0$.
    \end{lem}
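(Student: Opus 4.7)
The plan is to split
\[
L_{1}=(\beta-1)\Bigl[F\tr b-\tfrac{n}{\beta}\sum_{i}\tfrac{\partial F}{\partial \lambda_{i}}\Bigr]+C\Bigl(n\beta F-\tr b\sum_i\tfrac{\partial F}{\partial \lambda_{i}}\lambda_{i}^{2}\Bigr)
\]
into its $(\beta-1)$-piece and its $C$-piece and to prove each is non-negative separately. The main tool for both is Chebyshev's sum inequality. The key observation that sets it up is that, under our convention $\lambda_{1}\leq\lambda_{2}\leq\cdots\leq\lambda_{n}$, Condition \ref{condtn} iii) says precisely that the sequence $\{\tfrac{\partial F}{\partial\lambda_{i}}\lambda_{i}\}_{i=1}^{n}$ is non-decreasing (the pairwise condition with a positive denominator is equivalent to monotonicity when the $\lambda_{i}$ are totally ordered), while $\{\lambda_{i}^{-1}\}_{i=1}^{n}$ is obviously non-increasing.

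For the $(\beta-1)$-piece, Chebyshev applied to the oppositely ordered sequences $\{\lambda_{i}^{-1}\}$ and $\{\tfrac{\partial F}{\partial\lambda_{i}}\lambda_{i}\}$ gives
\[
n\sum_{i}\tfrac{\partial F}{\partial\lambda_{i}}=n\sum_{i}\lambda_{i}^{-1}\cdot\tfrac{\partial F}{\partial\lambda_{i}}\lambda_{i}\leq\Bigl(\sum_{i}\lambda_{i}^{-1}\Bigr)\Bigl(\sum_{i}\tfrac{\partial F}{\partial\lambda_{i}}\lambda_{i}\Bigr)=\tr b\cdot\beta F,
\]
where the last identity is the Euler relation from Condition \ref{condtn} ii). Dividing by $\beta$ gives $\tfrac{n}{\beta}\sum_{i}\tfrac{\partial F}{\partial\lambda_{i}}\leq F\tr b$, and since $\beta\geq 1$ multiplication by $(\beta-1)\geq 0$ preserves the sign.

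For the $C$-piece, I note that $\{\tfrac{\partial F}{\partial\lambda_{i}}\lambda_{i}^{2}\}=\{\tfrac{\partial F}{\partial\lambda_{i}}\lambda_{i}\}\cdot\{\lambda_{i}\}$ is a product of two non-decreasing positive sequences and therefore is itself non-decreasing. A second application of Chebyshev, again against the non-increasing sequence $\{\lambda_{i}^{-1}\}$, yields
\[
n\beta F=n\sum_{i}\tfrac{\partial F}{\partial\lambda_{i}}\lambda_{i}=n\sum_{i}\lambda_{i}^{-1}\cdot\tfrac{\partial F}{\partial\lambda_{i}}\lambda_{i}^{2}\leq\tr b\cdot\sum_{i}\tfrac{\partial F}{\partial\lambda_{i}}\lambda_{i}^{2}.
\]
Hence $n\beta F-\tr b\sum_{i}\tfrac{\partial F}{\partial\lambda_{i}}\lambda_{i}^{2}\leq 0$, and multiplying by $C\leq 0$ gives a non-negative contribution. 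Summing the two pieces produces $L_{1}\geq 0$.

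I do not expect a serious obstacle; the only mild point is checking that Condition \ref{condtn} iii) really upgrades from a pairwise statement to monotonicity of the sequence $\{\tfrac{\partial F}{\partial\lambda_{i}}\lambda_{i}\}$, but this is immediate once the $\lambda_{i}$ are linearly ordered and positive. Equality throughout both Chebyshev steps forces $\lambda_{1}=\cdots=\lambda_{n}$, so if desired one could also read off when $L_{1}=0$, but for the present lemma only the inequality is needed.
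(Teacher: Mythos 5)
Your proof is correct and is essentially the same argument as the paper's, just packaged differently: the paper symmetrizes $L_1$ directly into sums over pairs $i>j$ of the form $\frac{(\lambda_i-\lambda_j)}{\lambda_i\lambda_j}\big(\tfrac{\partial F}{\partial\lambda_i}\lambda_i-\tfrac{\partial F}{\partial\lambda_j}\lambda_j\big)$ and $\frac{(\lambda_i-\lambda_j)}{\lambda_i\lambda_j}\big(\tfrac{\partial F}{\partial\lambda_i}\lambda_i^2-\tfrac{\partial F}{\partial\lambda_j}\lambda_j^2\big)$, which is precisely the proof of Chebyshev's sum inequality unrolled, while you invoke Chebyshev by name on the oppositely ordered sequences $\{\lambda_i^{-1}\}$ and $\{\tfrac{\partial F}{\partial\lambda_i}\lambda_i\}$ (resp.\ $\{\tfrac{\partial F}{\partial\lambda_i}\lambda_i^2\}$). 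Both hinge on the same observations — Condition \ref{condtn} iii) is monotonicity of $\tfrac{\partial F}{\partial\lambda_i}\lambda_i$ once the $\lambda_i$ are ordered, the Euler relation $\sum_i\lambda_i\tfrac{\partial F}{\partial\lambda_i}=\beta F$, and the splitting into the $(\beta-1)$-piece and the $C$-piece.
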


    \begin{proof}

 It follows from $\sum_{i}\lambda_{i}\frac{\partial F}{\partial \lambda_{i}}=\beta F$ that
    \begin{align*}
    L_{1}
    &=\frac{(\beta-1)}{\beta}\sum_{i,j}\frac{\partial F}{\partial \lambda_{i}}(\frac{\lambda_{i}}{\lambda_{j}}-1)-C\sum_{i,j}\frac{\partial F}{\partial \lambda_{i}}\lambda_{i}(\frac{\lambda_{i}}{\lambda_{j}}-1)\\
    &=\frac{(\beta-1)}{\beta}\sum_{i,j}\frac{1}{\lambda_{j}}\frac{\partial F}{\partial \lambda_{i}}(\lambda_{i}-\lambda_{j})-C\sum_{i,j}\frac{\partial F}{\partial \lambda_{i}}\frac{\lambda_{i}}{\lambda_{j}}(\lambda_{i}-\lambda_{j})\\
    &=\frac{(\beta-1)}{\beta}\sum_{i>j}\frac{1}{\lambda_{i}\lambda_{j}}\Big(\frac{\partial F}{\partial \lambda_{i}}\lambda_{i}-\frac{\partial F}{\partial \lambda_{j}}\lambda_{j}\Big)(\lambda_{i}-\lambda_{j})\\
    &~-C\sum_{i>j}\frac{1}{\lambda_{i}\lambda_{j}}\Big(\frac{\partial F}{\partial \lambda_{i}}\lambda_{i}^{2}-\frac{\partial F}{\partial \lambda_{j}}\lambda_{j}^{2}\Big)(\lambda_{i}-\lambda_{j}).
    \end{align*}
    By Condition \ref{condtn} iii), we know $L_{1}\geq 0$.

    \end{proof}

\begin{cor}\label{L1=0}
For $F=\sigma_{k}^{\alpha}$ with $1\leq k \leq n-1$, if $\alpha> \frac{1}{k}$, $C\leq 0$ or $\alpha \geq \frac{1}{k}$, $C<0$, then $L_1=0$ is equivalent to $\lambda_1=\cdots=\lambda_n$.	 For $F=\sigma_n^{\alpha}$, if $\alpha> 0$, $C<0$, then $L_1=0$ is equivalent to  $\lambda_1=\cdots=\lambda_n$.
\end{cor}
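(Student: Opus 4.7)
The plan is to unpack the rearranged expression for $L_1$ already derived in the preceding lemma, specialize $F=\sigma_k^{\alpha}$, and show the vanishing of $L_1$ reduces to a sum of manifestly nonnegative quantities each of which forces $\lambda_i=\lambda_j$ under the stated sign assumptions.

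First I would substitute $\tfrac{\partial F}{\partial \lambda_i}=\alpha\sigma_k^{\alpha-1}\sigma_{k-1;i}$ and $\beta=k\alpha$ into the rearranged form
\begin{align*}
L_1 &= \tfrac{\beta-1}{\beta}\sum_{i>j}\tfrac{1}{\lambda_i\lambda_j}\bigl(\tfrac{\partial F}{\partial \lambda_i}\lambda_i-\tfrac{\partial F}{\partial \lambda_j}\lambda_j\bigr)(\lambda_i-\lambda_j) \\
&\quad - C\sum_{i>j}\tfrac{1}{\lambda_i\lambda_j}\bigl(\tfrac{\partial F}{\partial \lambda_i}\lambda_i^2-\tfrac{\partial F}{\partial \lambda_j}\lambda_j^2\bigr)(\lambda_i-\lambda_j).
\end{align*}
The identity $\sigma_{k-1;i}\lambda_i-\sigma_{k-1;j}\lambda_j=(\lambda_i-\lambda_j)\sigma_{k-1;ij}$ is the one already used in Lemma \ref{ki-iii}, and expanding $\sigma_{k-1;i}=\sigma_{k-1;ij}+\lambda_j\sigma_{k-2;ij}$ (and symmetrically) yields the companion identity
\begin{equation*}
\sigma_{k-1;i}\lambda_i^2-\sigma_{k-1;j}\lambda_j^2=(\lambda_i-\lambda_j)\bigl[(\lambda_i+\lambda_j)\sigma_{k-1;ij}+\lambda_i\lambda_j\sigma_{k-2;ij}\bigr].
\end{equation*}
Plugging both identities in collapses $L_1$ to the clean form
\begin{equation*}
L_1=\alpha\sigma_k^{\alpha-1}\sum_{i>j}\tfrac{(\lambda_i-\lambda_j)^2}{\lambda_i\lambda_j}\Bigl[\tfrac{k\alpha-1}{k\alpha}\sigma_{k-1;ij}+(-C)\bigl((\lambda_i+\lambda_j)\sigma_{k-1;ij}+\lambda_i\lambda_j\sigma_{k-2;ij}\bigr)\Bigr].
\end{equation*}

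It then remains to check, case by case, that the bracketed factor is \emph{strictly} positive on $\Gamma_+$. In the regime $1\leq k\leq n-1$, $\alpha>1/k$, $C\leq 0$, the coefficient $(k\alpha-1)/(k\alpha)$ is strictly positive and $\sigma_{k-1;ij}>0$ since $k-1\leq n-2$; the $-C$ contribution is nonnegative, so the bracket is strictly positive. In the regime $1\leq k\leq n-1$, $\alpha\geq 1/k$, $C<0$, the first summand is only nonnegative when $\alpha=1/k$, but the term $-C(\lambda_i+\lambda_j)\sigma_{k-1;ij}$ is already strictly positive on its own. Finally for $k=n$, $\alpha>0$, $C<0$, one uses $\sigma_{n-1;ij}=0$ to collapse the bracket to $-C\lambda_i\lambda_j\sigma_{n-2;ij}$, which is strictly positive as $\sigma_{n-2;ij}>0$ on $\Gamma_+$.

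In every case, strict positivity of the bracket forces each summand to satisfy $(\lambda_i-\lambda_j)^2=0$, i.e.\ $\lambda_1=\cdots=\lambda_n$; the converse is immediate. The only step requiring real care is the second polynomial identity and the small case analysis on which of $\sigma_{k-1;ij}$, $\sigma_{n-2;ij}$ is strictly positive depending on whether $k<n$ or $k=n$; I do not foresee any deeper obstacle.
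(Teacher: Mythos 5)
Your proof is correct, and it fills in what the paper leaves implicit: the paper states the corollary with no explicit proof, relying on the rearranged form of $L_1$ established in the preceding lemma. Your collapse of $L_1$ via the two identities $\sigma_{k-1;i}\lambda_i - \sigma_{k-1;j}\lambda_j = (\lambda_i-\lambda_j)\sigma_{k-1;ij}$ and $\sigma_{k-1;i}\lambda_i^2 - \sigma_{k-1;j}\lambda_j^2 = (\lambda_i-\lambda_j)\bigl[(\lambda_i+\lambda_j)\sigma_{k-1;ij} + \lambda_i\lambda_j\sigma_{k-2;ij}\bigr]$ (both immediate from $\sigma_{k-1;i}=\sigma_{k-1;ij}+\lambda_j\sigma_{k-2;ij}$) is exactly the natural route, and your case analysis correctly tracks which of $\sigma_{k-1;ij}$ and $\sigma_{k-2;ij}$ is strictly positive in the regimes $1\le k\le n-1$ versus $k=n$ (where $\sigma_{n-1;ij}=0$); note the argument also goes through at $k=1$ since $\sigma_{0;ij}=1$ keeps the bracket strictly positive even though $\sigma_{-1;ij}=0$.
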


  \begin{cor}
	For $F=S_{k}^{\alpha}$ and  $k\geq 1$,  if $\alpha> \frac{1}{k}$, $C\leq 0$ or $\alpha \geq \frac{1}{k}$, $C<0$, then $L_1=0$ is equivalent to $\lambda_1=\cdots=\lambda_n$.
\end{cor}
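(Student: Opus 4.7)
The plan is to specialize the general expression for $L_1$ derived in the preceding lemma to $F=S_k^\alpha$ and then read off the equality condition from strict inequalities available in each of the two parameter regimes.

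First I would compute the relevant derivatives. For $F=S_k^\alpha$ we have $\beta=k\alpha$ and $\tfrac{\partial F}{\partial \lambda_i}=\alpha k\, S_k^{\alpha-1}\lambda_i^{k-1}$, so $\tfrac{\partial F}{\partial \lambda_i}\lambda_i=\alpha k\, S_k^{\alpha-1}\lambda_i^{k}$ and $\tfrac{\partial F}{\partial \lambda_i}\lambda_i^2=\alpha k\, S_k^{\alpha-1}\lambda_i^{k+1}$. Substituting these into the formula from the previous lemma,
\begin{align*}
L_1=\alpha k\, S_k^{\alpha-1}\sum_{i>j}\frac{1}{\lambda_i\lambda_j}\Bigl[\tfrac{k\alpha-1}{k\alpha}(\lambda_i^{k}-\lambda_j^{k})-C(\lambda_i^{k+1}-\lambda_j^{k+1})\Bigr](\lambda_i-\lambda_j).
\end{align*}
Since $\lambda_i,\lambda_j>0$, both $(\lambda_i^k-\lambda_j^k)(\lambda_i-\lambda_j)$ and $(\lambda_i^{k+1}-\lambda_j^{k+1})(\lambda_i-\lambda_j)$ are nonnegative, vanishing if and only if $\lambda_i=\lambda_j$.

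Next I would split into the two cases. If $\alpha>\tfrac{1}{k}$ and $C\leq 0$, the coefficient $\tfrac{k\alpha-1}{k\alpha}$ is strictly positive and $-C\geq 0$, so every summand in $L_1$ is nonnegative and the $\tfrac{k\alpha-1}{k\alpha}$-term is strictly positive unless $\lambda_i=\lambda_j$ for every pair; hence $L_1=0$ forces $\lambda_1=\cdots=\lambda_n$. If instead $\alpha\geq\tfrac{1}{k}$ and $C<0$, then $\tfrac{k\alpha-1}{k\alpha}\geq 0$ and $-C>0$, so the $-C$-term is strictly positive unless all $\lambda_i$ agree, and again $L_1=0$ implies $\lambda_1=\cdots=\lambda_n$. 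The converse is immediate in both regimes since each bracketed factor vanishes when all principal curvatures coincide.

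The argument is essentially a routine specialization with no real obstacle; the only minor point to handle carefully is that in the borderline case $\alpha=\tfrac{1}{k}$ one must rely entirely on $C<0$ to extract the strict positivity, which is why that case is excluded when $C=0$. This parallels the reasoning for Corollary~\ref{L1=0} and completes the proof.
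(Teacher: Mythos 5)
Your proof is correct and follows exactly the route the paper intends: the paper states this corollary (like Corollary \ref{L1=0}) without an explicit proof, leaving it as an immediate consequence of the preceding lemma's sum-over-pairs expression for $L_1$; your specialization to $F=S_k^\alpha$, the observation that both $(\lambda_i^k-\lambda_j^k)(\lambda_i-\lambda_j)$ and $(\lambda_i^{k+1}-\lambda_j^{k+1})(\lambda_i-\lambda_j)$ are nonnegative with equality iff $\lambda_i=\lambda_j$ (valid for any real $k\geq 1$ since $t\mapsto t^k$ is strictly increasing on $(0,\infty)$), and the two-case strictness argument are precisely the expected reasoning. No gap.
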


    \begin{lem}\label{cs}
        For any $1\leq p\leq n$, we have the following inequality
        \begin{equation*}
        \sum_{i}\lambda_{i}^{-1}\frac{\partial\log F}{\partial \lambda_{i}}h_{iip}^{2}\geq \frac{1}{\beta}(\nabla_{p}\log F)^{2}.
        \end{equation*}
    \end{lem}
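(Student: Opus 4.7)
The plan is to prove the inequality by a direct Cauchy--Schwarz argument, using only two inputs: the expression of $\nabla_p\log F$ as a diagonal sum (obtained via Lemma \ref{andrews}), and Euler's homogeneity identity.

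First, I would use Lemma \ref{andrews}(i) together with the chosen orthonormal frame (in which $h_{ij}$ is diagonal at the point) to write
\[
\nabla_p\log F \;=\; F^{-1}\frac{\partial F}{\partial h_{ij}}h_{ijp}\;=\;\sum_i \frac{\partial\log F}{\partial \lambda_i}\,h_{iip}.
\]
Next, since $F$ is homogeneous of degree $\beta$, Euler's identity gives
\[
\sum_i \lambda_i\,\frac{\partial\log F}{\partial \lambda_i}\;=\;\beta.
\]

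Now I would factor each summand in the expression for $\nabla_p\log F$ as
\[
\frac{\partial\log F}{\partial\lambda_i}\,h_{iip}
\;=\;\Bigl(\sqrt{\lambda_i\,\tfrac{\partial\log F}{\partial\lambda_i}}\Bigr)\cdot\Bigl(\sqrt{\lambda_i^{-1}\,\tfrac{\partial\log F}{\partial\lambda_i}}\;h_{iip}\Bigr),
\]
which is valid because $\lambda_i>0$ (strict convexity) and $\partial F/\partial\lambda_i>0$ by Condition \ref{condtn}(i). Applying Cauchy--Schwarz and then Euler's identity yields
\[
(\nabla_p\log F)^2 \;\le\; \Bigl(\sum_i \lambda_i\,\tfrac{\partial\log F}{\partial\lambda_i}\Bigr)\Bigl(\sum_i \lambda_i^{-1}\,\tfrac{\partial\log F}{\partial\lambda_i}\,h_{iip}^2\Bigr)\;=\;\beta\sum_i \lambda_i^{-1}\,\tfrac{\partial\log F}{\partial\lambda_i}\,h_{iip}^2,
\]
and rearranging gives the desired bound.

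There is no real obstacle here: strict convexity of $M$ and monotonicity of $F$ from Condition \ref{condtn} make both weights in the Cauchy--Schwarz split strictly positive, and homogeneity supplies precisely the constant $\beta$ needed on the right-hand side. The only point to be careful about is to work at a point with the frame diagonalizing $h$, so that the off-diagonal terms $\frac{\partial F}{\partial h_{ij}}h_{ijp}$ collapse via Lemma \ref{andrews}(i) into a single-index sum.
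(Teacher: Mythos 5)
Your argument is exactly the paper's proof: express $\nabla_p\log F=\sum_i\frac{\partial\log F}{\partial\lambda_i}h_{iip}$ at a diagonalizing frame, split each summand as $\bigl(\lambda_i\tfrac{\partial\log F}{\partial\lambda_i}\bigr)^{1/2}\cdot\bigl(\lambda_i^{-1}\tfrac{\partial\log F}{\partial\lambda_i}\bigr)^{1/2}h_{iip}$, apply Cauchy--Schwarz, and invoke Euler's identity to produce the factor $\beta$. Correct, and the same route as the authors; you merely spell out the weight factorization and the positivity hypotheses a bit more explicitly.
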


    \begin{proof}
        According to the Cauchy-Schwarz inequality and $\sum_{i}\lambda_{i}\frac{\partial F}{\partial \lambda_{i}}=\beta F$, it follows that
        \begin{align*}
        (\nabla_{p}\log F)^{2}&=(\sum_{i}\frac{\partial\log F}{\partial \lambda_{i}}h_{iip})^{2}\\
        &\leq(\sum_{i}\lambda_{i}\frac{\partial\log F}{\partial \lambda_{i}})(\sum_{i}\lambda_{i}^{-1}\frac{\partial\log F}{\partial \lambda_{i}}h_{iip}^{2})\\
        &=\beta(\sum_{i}\lambda_{i}^{-1}\frac{\partial\log F}{\partial \lambda_{i}}h_{iip}^{2}).
        \end{align*}
    \end{proof}

    \begin{proof}[Proof of Theorem \ref{thmg}]

    It follows from Lemma \ref{z} and Condition \ref{condtn} iv) that
    \begin{align*}
    L_{3}
    &=2F^{2}\frac{\partial\log F}{\partial \lambda_{i}}\lambda_{p}^{-2}\lambda_{q}^{-1}h_{pqi}^{2}+F^{2}\lambda_{p}^{-2}\frac{\partial^{2}\log F}{\partial \lambda_{i}\partial \lambda_{j}}h_{iip}h_{jjp}+F^{2}\lambda_{p}^{-2}(\nabla_{p}\log F)^{2}\\
    &~+F\lambda_{p}^{-2}\sum_{i\neq j}(\frac{\partial F}{\partial \lambda_{i}}-\frac{\partial F}{\partial \lambda_{j}})(\lambda_{i}-\lambda_{j})^{-1}h_{ijp}^{2}\\
    &\geq 2F^{2}\frac{\partial\log F}{\partial \lambda_{i}}\lambda_{p}^{-2}\lambda_{q}^{-1}h_{pqi}^{2}-F^{2}\lambda_{p}^{-2}\lambda_{i}^{-1}\frac{\log F}{\partial \lambda_{i}}h_{iip}^{2}+F^{2}\lambda_{p}^{-2}(\nabla_{p}\log F)^{2}\\
    &~+F\lambda_{p}^{-2}\sum_{i\neq j}(\frac{\partial F}{\partial \lambda_{i}}-\frac{\partial F}{\partial \lambda_{j}})(\lambda_{i}-\lambda_{j})^{-1}h_{ijp}^{2}.
    \end{align*}

    By
    \begin{align*}
    &\quad 2F^{2}\sum_{i\neq q}\frac{\partial\log F}{\partial \lambda_{i}}\lambda_{p}^{-2}\lambda_{q}^{-1}h_{pqi}^{2}+F\lambda_{p}^{-2}\sum_{i\neq j}(\frac{\partial F}{\partial \lambda_{i}}-\frac{\partial F}{\partial \lambda_{j}})(\lambda_{i}-\lambda_{j})^{-1}h_{ijp}^{2}\\
    &=F\lambda_{p}^{-2}\sum_{i\neq j}(\frac{\partial F}{\partial \lambda_{i}}\lambda_{i}^{2}-\frac{\partial F}{\partial \lambda_{j}}\lambda_{j}^{2})\lambda_{i}^{-1}\lambda_{j}^{-1}(\lambda_{i}-\lambda_{j})^{-1}h_{ijp}^{2}\\
    &\geq 0
    \end{align*}
    and Lemma \ref{cs},
    we have
    \begin{align*}
    L_{3}\geq \frac{\beta+1}{\beta}F^{2}\lambda_{p}^{-2}(\nabla_{p}\log F)^{2}.
    \end{align*}

    Since
    \begin{align*}
    L_{2}=F^{2}\Big(\frac{n(\beta-1)}{\beta}\lambda_{i}^{-1}(2\frac{\partial\log F}{\partial \lambda_{i}}+\lambda_{i}^{-1})-2\frac{\partial\log F}{\partial \lambda_{i}}\tr b\Big)(\nabla_{i}\log F)^{2},
    \end{align*}
    we have
    \begin{align*}
    L_{2}+L_{3}&\geq F^{2}\Big(2\frac{\partial\log F}{\partial \lambda_{i}}(n\lambda_{i}^{-1}-\tr b)-\frac{2n}{\beta}\lambda_{i}^{-1}\frac{\partial\log F}{\partial \lambda_{i}}+\frac{(n+1)\beta-n+1}{\beta}\lambda_{i}^{-2}\Big)(\nabla_{i}\log F)^{2}.
    \end{align*}

    Assume that $x_0$ is a maximum point of $\tilde{W}$. Then it is follows from Lemma \ref{wmaxg} that $x_0$ is an umbilic point.
    At $x_0$, for any fixed $i$, we have
    \begin{align*}
    n\lambda_{i}^{-1}-\tr b=0
    \end{align*}
    and
    \begin{align*}
    -\frac{2n}{\beta}\lambda_{i}^{-1}\frac{\partial\log F}{\partial \lambda_{i}}=-\frac{2n}{\beta}\lambda_{i}^{-2}F^{-1}\lambda_{i}\frac{\partial F}{\partial \lambda_{i}}\geq-2\lambda_{i}^{-2},
    \end{align*}
    thus
    \begin{equation*}
    L_{2}+L_{3}\geq F^{2}\lambda_{i}^{-2}\frac{(n-1)(\beta-1)}{\beta}(\nabla_{i}\log F)^{2}\geq 0.
    \end{equation*}

    Since $Z\leq n\tilde{W}\leq n \tilde{W}(x_{0})=Z(x_{0})$, $Z$ attains its maximum at $x_0$. Hence, there exists a neighborhood of $x_{0}$, denoted by $U$, such that in $U$, $\LF Z +R(\nabla Z)\geq 0$. By the strong maximum principle, we know $Z=Z(x_{0})$ is constant in $U$, which implies  $\tilde{W}$ is also constant in $U$. Then the set of points where $\tilde{W}$ attains its maximum is an open set.  Due to the connectedness of $M$, $\tilde{W}$ is constant on $M$.
The theorem follows immediately from Lemma \ref{wmaxg}.

    \end{proof}

\section{Proofs of Theorem A and Theorem B}
 \label{Sec:ThmAB}

   In order to prove Theorem A and Theorem B, we use \eqref{deri0z} to estimate $L_{2}$ and $L_{3}$ in a different way.

    \begin{lem}\label{l2l3}
        If $F$ satisfies i), ii), iii) of Condition \ref{condtn}, we have
        \begin{align*}
        L_{2}+L_{3}+R(\nabla Z)&\geq F^{2}\lambda_{i}^{-1}(-\frac{2n(\beta-1)}{\beta}\frac{\partial\log F}{\partial \lambda_{i}}+\frac{\beta(n+1)-n}{\beta}\lambda_{i}^{-1})(\nabla_{i}\log F)^{2}\\
        &~+2F^{2}\frac{\partial\log F}{\partial \lambda_{i}}\lambda_{p}^{-1}(\lambda_{p}^{-1}h_{ppi}-\nabla_{i}\log F)^{2}+F^{2}\lambda_{p}^{-2}\frac{\partial^{2}\log F}{\partial \lambda_{i}\partial \lambda_{j}}h_{iip}h_{jjp}\\
        &~+2F^{2}\sum_{i\neq p}\frac{\partial\log F}{\partial \lambda_{i}}\lambda_{p}^{-2}\lambda_{i}^{-1}h_{pii}^{2}.
        \end{align*}
    \end{lem}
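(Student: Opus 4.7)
The plan is to massage $L_2+L_3+R(\nabla Z)$ (from Lemma~\ref{z}) into the right-hand side, using \eqref{deri0z}, Codazzi symmetry of $h_{abc}$, and the strengthened form of Condition~iii) that appears in the proof of Theorem~\ref{thmg}. First, I apply \eqref{deri0z} to the $\tr b\cdot(\nabla_i F)^2$ term in $L_2$: combining with $\nabla_i\tr b=-\sum_p\lambda_p^{-2}h_{ppi}$ (from \eqref{eq:b_klj}) gives $\tr b\,\nabla_i F=F\sum_p\lambda_p^{-2}h_{ppi}+\nabla_i Z+\tfrac{n(\beta-1)}{\beta}\lambda_i^{-1}\nabla_i F$, and substituting this into $L_2$, absorbing the $\nabla_i Z$ piece into $R(\nabla Z)$, and collecting, leaves
\[L_2\equiv F^2\tfrac{n(\beta-1)}{\beta}\lambda_i^{-2}(\nabla_i\log F)^2-2F^2\tfrac{\partial\log F}{\partial\lambda_i}(\nabla_i\log F)\sum_p\lambda_p^{-2}h_{ppi}\pmod{R(\nabla Z)}.\]
In $L_3$, I convert the $F$-derivatives to $\log F$-derivatives via $\partial^2 F/\partial\lambda_i\partial\lambda_j=F[\partial^2\log F/\partial\lambda_i\partial\lambda_j+(\partial\log F/\partial\lambda_i)(\partial\log F/\partial\lambda_j)]$, splitting the Hessian piece of $L_3$ into Term~C plus $F^2\lambda_p^{-2}(\nabla_p\log F)^2$.

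Second, I decompose the first term of $L_3$, namely $2F^2(\partial\log F/\partial\lambda_i)\lambda_p^{-2}\lambda_q^{-1}h_{pqi}^2$, by the index relations between $i$, $p$, $q$. The subcase $q=i\neq p$ yields Term~D after the Codazzi identity $h_{pii}=h_{iip}$. The subcases with $i\neq q$ combine with the off-diagonal term $F^2\lambda_p^{-2}\sum_{i\neq j}(\partial\log F/\partial\lambda_i-\partial\log F/\partial\lambda_j)(\lambda_i-\lambda_j)^{-1}h_{ijp}^2$ of $L_3$, after Codazzi symmetrization, to form the expression
\[F\lambda_p^{-2}\sum_{i\neq j}\tfrac{(\partial F/\partial\lambda_i)\lambda_i^2-(\partial F/\partial\lambda_j)\lambda_j^2}{\lambda_i\lambda_j(\lambda_i-\lambda_j)}h_{ijp}^2\]
that already appeared in the proof of Theorem~\ref{thmg}. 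Since Condition~iii) together with $\lambda_i,\lambda_j>0$ forces $(\partial F/\partial\lambda_i)\lambda_i^2-(\partial F/\partial\lambda_j)\lambda_j^2$ to have the same sign as $\lambda_i-\lambda_j$, this combined piece is nonnegative and can be discarded in the $\geq$ direction.

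The remaining terms---the modified $L_2$, the diagonal leftover $2F^2(\partial\log F/\partial\lambda_i)\lambda_i^{-3}h_{iii}^2$ from the $p=q=i$ subcase, the split-off $F^2\lambda_p^{-2}(\nabla_p\log F)^2$, Term~C, and Term~D---are then identified with Term~A+Term~B+Term~C+Term~D. The matching uses the expansion of the completed square in Term~B,
\[\lambda_p^{-1}(\lambda_p^{-1}h_{ppi}-\nabla_i\log F)^2=\lambda_p^{-3}h_{ppi}^2-2\lambda_p^{-2}h_{ppi}\nabla_i\log F+\lambda_p^{-1}(\nabla_i\log F)^2:\]
the $\lambda_p^{-3}h_{ppi}^2$ reproduces the diagonal (including $h_{iii}^2$), the cross term pairs with the modified $L_2$'s cross term, and the $\tr b\,(\nabla_i\log F)^2$ piece combines with the $\lambda_p^{-2}(\nabla_p\log F)^2$ split-off and the modified $L_2$'s pure $(\nabla_i\log F)^2$ to assemble Term~A via the identity $\tfrac{n(\beta-1)}{\beta}+1=\tfrac{\beta(n+1)-n}{\beta}$. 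The main obstacle is the intricate index bookkeeping in the second step: one must track which Codazzi-symmetric contributions of $h_{pqi}^2$ feed Term~D, which combine into the Condition~iii)-nonnegative discard, and which complete the square producing Term~B, so that no cross-term is left uncancelled when the residual pieces collapse to the stated right-hand side.
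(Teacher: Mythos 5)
Your overall strategy — substitute \eqref{deri0z} into $L_2$, convert to $\log F$-derivatives, split the sum $2F^2\frac{\partial\log F}{\partial\lambda_i}\lambda_p^{-2}\lambda_q^{-1}h_{pqi}^2$ by index coincidences, and discard a Condition-iii)-nonnegative remainder — is the same as the paper's, but the particular index split you choose produces a genuine gap. You partition the first $L_3$ term by $q=i$ versus $q\neq i$ and feed the \emph{entire} $q\neq i$ part, together with the third $L_3$ term, into the identity from the proof of Theorem~\ref{thmg} to get $F\lambda_p^{-2}\sum_{i\neq j}\frac{(\partial F/\partial\lambda_i)\lambda_i^2-(\partial F/\partial\lambda_j)\lambda_j^2}{\lambda_i\lambda_j(\lambda_i-\lambda_j)}h_{ijp}^2$, which you then discard. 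But the $q\neq i$ part already contains the diagonal subcase $p=q\neq i$, namely $2F^2\frac{\partial\log F}{\partial\lambda_i}\sum_{p\neq i}\lambda_p^{-3}h_{ppi}^2$, and those are exactly the terms you need to assemble Term~B: expanding $2F^2\frac{\partial\log F}{\partial\lambda_i}\sum_p\lambda_p^{-1}(\lambda_p^{-1}h_{ppi}-\nabla_i\log F)^2$ (and then re-substituting $\tr b\,\nabla_i\log F$ via \eqref{deri0z}) requires the \emph{full} diagonal $\sum_p\lambda_p^{-3}h_{ppi}^2$, not only the $p=i$ piece $\lambda_i^{-3}h_{iii}^2$ you kept aside. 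Concretely, modulo $R(\nabla Z)$ one has
\begin{equation*}
(\text{modified }L_2)+2F^2\tfrac{\partial\log F}{\partial\lambda_i}\lambda_i^{-3}h_{iii}^2+F^2\lambda_i^{-2}(\nabla_i\log F)^2
=\text{Term A}+\text{Term B}-2F^2\tfrac{\partial\log F}{\partial\lambda_i}\sum_{p\neq i}\lambda_p^{-3}h_{ppi}^2,
\end{equation*}
so after discarding the nonnegative Condition-iii) combination your argument only yields $L_2+L_3+R(\nabla Z)\geq \text{RHS}-2F^2\frac{\partial\log F}{\partial\lambda_i}\sum_{p\neq i}\lambda_p^{-3}h_{ppi}^2$, which is strictly weaker than the lemma (the missing term is nonnegative, so the error goes the wrong way and cannot be reabsorbed).

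The fix is to split on $p=q$ versus $p\neq q$ rather than on $q=i$ versus $q\neq i$, as the paper does: retain the whole diagonal $\sum_p\lambda_p^{-3}h_{ppi}^2$ (keeping $\tr b$ in play until the square is completed) to build Term~B, and only the off-diagonal $p\neq q$ part enters the Condition-iii) bookkeeping. That off-diagonal part must then itself be separated into the case where $i,p,q$ are all distinct (combining with the all-distinct part of the third $L_3$ term to give $F\sum_{\neq}\frac{(\partial F/\partial\lambda_i)\lambda_i^2-(\partial F/\partial\lambda_j)\lambda_j^2}{\lambda_i\lambda_j(\lambda_i-\lambda_j)}\lambda_p^{-2}h_{ijp}^2$) and the case $i=p\neq q$ (combining with the $p\in\{i,j\}$ part of the third term to give $2F\sum_{i\neq p}\frac{(\partial F/\partial\lambda_i)\lambda_i-(\partial F/\partial\lambda_p)\lambda_p}{\lambda_i-\lambda_p}\lambda_p^{-2}\lambda_i^{-1}h_{ipp}^2$); the single Theorem-\ref{thmg} identity you cite does not cover this refined split.
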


    \begin{proof}
        Using \eqref{deri0z}, we have
         \begin{align*}
        &\quad 2F^{2}\frac{\partial\log F}{\partial \lambda_{i}}\Big((-\tr b+\frac{n(\beta-1)}{\beta}\lambda_{i}^{-1})(\nabla_{i}\log F)^{2}+\lambda_{p}^{-2}\lambda_{q}^{-1}h_{pqi}^{2}\Big)\\
        &=2F^{2}\frac{\partial\log F}{\partial \lambda_{i}}\Big(\sum_{p}\lambda_{p}^{-1}(\lambda_{p}^{-2}h_{ppi}^{2}-(\nabla_{i}\log F)^{2})+\frac{n(\beta-1)}{\beta}\lambda_{i}^{-1}(\nabla_{i}\log F)^{2}\\
        &\quad+\sum_{p\neq q}\lambda_{p}^{-2}\lambda_{q}^{-1}h_{pqi}^{2}\Big)\\
        &=2F^{2}\frac{\partial\log F}{\partial \lambda_{i}}\Big(\sum_{p}\lambda_{p}^{-1}(\lambda_{p}^{-1}h_{ppi}-\nabla_{i}\log F)^{2}-\frac{n(\beta-1)}{\beta}\lambda_{i}^{-1}(\nabla_{i}\log F)^{2}\\
        &~+\sum_{p\neq q}\lambda_{p}^{-2}\lambda_{q}^{-1}h_{pqi}^{2}+R(\nabla Z)\Big).
        \end{align*}

        Thus,
        \begin{align*}
        L_{2}+L_{3}+R(\nabla Z)&=\frac{n(\beta-1)}{\beta}F^{2}\lambda_{i}^{-1}(-2\frac{\partial\log F}{\partial \lambda_{i}}+\lambda_{i}^{-1})(\nabla_{i}\log F)^{2}\\
        &~+2F^{2}\frac{\partial\log F}{\partial \lambda_{i}}\lambda_{p}^{-1}(\lambda_{p}^{-1}h_{ppi}-\nabla_{i}\log F)^{2}+F^{2}\lambda_{p}^{-2}(\nabla_{p}\log F)^{2}\\
        &~+2F^{2}\frac{\partial\log F}{\partial \lambda_{i}}\sum_{p\neq q}\lambda_{p}^{-2}\lambda_{q}^{-1}h_{pqi}^{2}+F^{2}\lambda_{p}^{-2}\frac{\partial^{2}\log F}{\partial \lambda_{i}\partial \lambda_{j}}h_{iip}h_{jjp}\\
        &~+F\lambda_{p}^{-2}\sum_{i\neq j}(\frac{\partial F}{\partial \lambda_{i}}-\frac{\partial F}{\partial \lambda_{j}})(\lambda_{i}-\lambda_{j})^{-1}h_{ijp}^{2}.
        \end{align*}

        Noticing
        \begin{align*}
        &\quad 2F^{2}\sum_{i}\sum_{p\neq q}\frac{\partial\log F}{\partial \lambda_{i}}\lambda_{p}^{-2}\lambda_{q}^{-1}h_{pqi}^{2}+F\sum_{p}\sum_{i\neq j}\lambda_{p}^{-2}(\frac{\partial F}{\partial \lambda_{i}}-\frac{\partial F}{\partial \lambda_{j}})(\lambda_{i}-\lambda_{j})^{-1}h_{ijp}^{2}\\
        &=2F^{2}\sum_{i\neq p}\frac{\partial\log F}{\partial \lambda_{i}}\lambda_{p}^{-2}\lambda_{i}^{-1}h_{pii}^{2}+2F^{2}\sum_{\neq}\frac{\partial\log F}{\partial \lambda_{i}}\lambda_{p}^{-2}\lambda_{q}^{-1}h_{pqi}^{2}\\
        &~+2F^{2}\sum_{i\neq p}\frac{\partial\log F}{\partial \lambda_{i}}\lambda_{i}^{-2}\lambda_{p}^{-1}h_{pii}^{2}+F\sum_{\neq}\lambda_{p}^{-2}(\frac{\partial F}{\partial \lambda_{i}}-\frac{\partial F}{\partial \lambda_{j}})(\lambda_{i}-\lambda_{j})^{-1}h_{ijp}^{2}\\
        &~+2F\sum_{i\neq p}\lambda_{p}^{-2}(\frac{\partial F}{\partial \lambda_{i}}-\frac{\partial F}{\partial \lambda_{p}})(\lambda_{i}-\lambda_{p})^{-1}h_{ipp}^{2}\\
        &=2F^{2}\sum_{i\neq p}\frac{\partial\log F}{\partial \lambda_{i}}\lambda_{p}^{-2}\lambda_{i}^{-1}h_{pii}^{2}+F\sum_{\neq}\frac{\frac{\partial F}{\partial \lambda_{i}}\lambda_{i}^{2}-\frac{\partial F}{\partial \lambda_{j}}\lambda_{j}^{2}}{\lambda_{i}\lambda_{j}(\lambda_{i}-\lambda_{j})}\lambda_{p}^{-2}h_{ijp}^{2}\\
        &~+2F\sum_{i\neq p}\frac{\frac{\partial F}{\partial \lambda_{i}}\lambda_{i}-\frac{\partial F}{\partial \lambda_{p}}\lambda_{p}}{\lambda_{i}-\lambda_{p}}\lambda_{p}^{-2}\lambda_{i}^{-1}h_{ipp}^{2}
        \end{align*}
        and
        \begin{equation*}
        \frac{\frac{\partial F}{\partial \lambda_{i}}\lambda_{i}^{2}-\frac{\partial F}{\partial \lambda_{j}}\lambda_{j}^{2}}{\lambda_{i}-\lambda_{j}}\geq \frac{\frac{\partial F}{\partial \lambda_{i}}\lambda_{i}-\frac{\partial F}{\partial \lambda_{j}}\lambda_{j}}{\lambda_{i}-\lambda_{j}}\geq 0,
        \end{equation*}
        we complete the proof.
    \end{proof}

  In order to discuss  $F=\sigma_{k}^{\alpha}$ further, we need the following lemma.

    \begin{lem}\label{condmin}
        Suppose that $y_{i}\in \mathbb{R}$, $t_{i}>0$ for $1\leq i\leq n$ and $\sum_{i=1}^{n}\frac{1}{t_{i}}=k$. For any $1\leq m\leq n$, the following inequality holds
    \begin{equation*}
	\sum_{i}t_{i}y_{i}^{2}-4\alpha y_{m}(\sum_{i}y_{i})\geq \Big(\frac{1}{k}(\frac{2\alpha}{t_{m}}-1)^{2}-\frac{4\alpha^{2}}{t_{m}}\Big)(\sum_{i}y_{i})^{2}.
    \end{equation*}
    \end{lem}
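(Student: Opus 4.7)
The plan is to fix the linear quantity $S := \sum_i y_i$ and minimize the left-hand side
\[
Q(y_1,\dots,y_n) = \sum_i t_i y_i^2 - 4\alpha y_m S
\]
over the hyperplane $\{\sum_i y_i = S\}$ for each fixed $S \in \mathbb{R}$. Since every $(y_1,\dots,y_n)$ lies on exactly one such hyperplane, and the expression $Q$ computed with $S = \sum y_i$ agrees with the left-hand side of the inequality, proving the lemma reduces to showing that the constrained minimum $Q_{\min}(S)$ equals exactly
$\bigl(\frac{1}{k}(\tfrac{2\alpha}{t_m}-1)^2 - \frac{4\alpha^2}{t_m}\bigr)S^2$. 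Note that scaling $(y_1,\dots,y_n) \mapsto (\lambda y_1,\dots,\lambda y_n)$ also scales $S$ by $\lambda$, so $Q_{\min}(S)$ is automatically quadratic in $S$, which confirms the shape of the expected answer.

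First I would apply Lagrange multipliers. At a critical point of $Q$ on the constraint surface, there exists $\mu \in \mathbb{R}$ with $2 t_i y_i - 4\alpha S\,\delta_{im} = \mu$ for every $i$, so that
\[
y_i = \frac{\mu}{2 t_i} \quad (i \neq m), \qquad y_m = \frac{\mu + 4\alpha S}{2 t_m}.
\]
Imposing $\sum_i y_i = S$ and using the hypothesis $\sum_i 1/t_i = k$ yields
\[
\mu = \frac{2S}{k}\Bigl(1 - \frac{2\alpha}{t_m}\Bigr).
\]

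Next I would compute $Q_{\min}(S)$ directly. Splitting $\sum_i t_i y_i^2$ into the $i\neq m$ part $\frac{\mu^2}{4}(k - 1/t_m)$ and the $i=m$ part $\frac{(\mu+4\alpha S)^2}{4 t_m}$, and observing that the linear-in-$\mu$ cross term inside $(\mu + 4\alpha S)^2/(4 t_m)$ cancels exactly against the corresponding piece of $-4\alpha y_m S$, one obtains
\[
Q_{\min}(S) = \frac{k\mu^2}{4} - \frac{4\alpha^2 S^2}{t_m}.
\]
Substituting the value of $\mu$ gives precisely the right-hand side of the stated inequality. Strict convexity of $Q$ restricted to the hyperplane (its Hessian $2\,\mathrm{diag}(t_i)$ is positive definite) ensures this critical point is the global minimum, so $Q \geq Q_{\min}(S)$ on the hyperplane, which is the desired inequality.

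The only real obstacle is arithmetic bookkeeping: making sure the $\mu S/t_m$ cross terms cancel and that the factors of $t_m$, $1/t_m$, and $2\alpha/t_m$ collapse into the exact form on the right. There is no conceptual difficulty beyond standard constrained quadratic optimization, and the hypothesis $\sum_i 1/t_i = k$ enters exactly once, at the step where $\mu$ is solved for in terms of $S$.
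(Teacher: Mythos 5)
Your proposal is correct and follows essentially the same route as the paper: both minimize the left-hand quadratic on the hyperplane $\sum_i y_i = S$ via Lagrange multipliers, use $\sum_i 1/t_i = k$ exactly once to solve for the multiplier, and invoke positivity of the $t_i$ (strict convexity) to certify the critical point as the global minimum. The only cosmetic difference is that the paper normalizes $S=1$ by homogeneity before optimizing, whereas you carry $S$ along; the algebra and the final identity are the same.
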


    \begin{proof}
    	If $\sum_{i}y_{i}=0$, the inequality is trivial. If $\sum_{i}y_{i}\neq 0$, we may assume $\sum_{i}y_{i}=1$. In fact, we will estimate the minimum of
       	\begin{equation*}
	    f(y_{1},...,y_{n})=\sum_{i}t_{i}y_{i}^{2}-4\alpha y_{m}
    	\end{equation*}
	    under the condition $\sum_{i}y_{i}=1$.
	    Using Lagrangian multiplier technique, we solve the following equations for $\tilde{f}=f+\tau(\sum_{i}y_{i}-1)$,
	    \begin{align*}
	    0=\frac{\partial}{\partial y_{i}}\tilde{f}&=2t_{i}y_{i}-4\alpha\delta_{im}+\tau,\\
    	0=\frac{\partial}{\partial \tau}\tilde{f}&=\sum_{i}y_{i}-1.
    	\end{align*}
    	And, using $\sum_{i=1}^{n}\frac{1}{t_{i}}=k$, we have $y_{i}=\frac{2\alpha\delta_{im}}{t{i}}-\frac{1}{2t_{i}}\tau$ and $\tau=\frac{4\alpha}{kt_{m}}-\frac{2}{k}$. Thus, $y_{i}=\frac{1}{t_{i}}(2\alpha\delta_{im}-\frac{2\alpha}{kt_{m}}+\frac{1}{k})$. Because $t_{i}>0$, we know
    	\begin{align*}
	    f_{\min}&=\sum_{i}\frac{1}{t_{i}}(2\alpha\delta_{im}-\frac{2\alpha}{kt_{m}}+\frac{1}{k})^{2}-\frac{4\alpha}{t_{m}}(2\alpha-\frac{2\alpha}{kt_{m}}+\frac{1}{k})\\
    	&=\sum_{i\neq m}\frac{1}{k^{2}t_{i}}(\frac{2\alpha}{t_{m}}-1)^{2}+\frac{1}{t_{m}}(-2\alpha+\frac{2\alpha}{kt_{m}}-\frac{1}{k})(2\alpha+\frac{2\alpha}{kt_{m}}-\frac{1}{k})\\
	    &=\sum_{i}\frac{1}{k^{2}t_{i}}(\frac{2\alpha}{t_{m}}-1)^{2}-\frac{4\alpha^{2}}{t_{m}}\\
    	&=\frac{1}{k}(\frac{2\alpha}{t_{m}}-1)^{2}-\frac{4\alpha^{2}}{t_{m}}.
    	\end{align*}
    \end{proof}

	Now, we obtain the result for $F=\sigma_k^{\alpha}$.


          \begin{thm}\label{sigmak2+}
        For $F=\sigma_{k}^{\alpha}$ and $C\leq 0$, if $2\leq k\leq n-1$ and $\frac{1}{k} \leq \alpha \leq \frac{1}{2}$, the strictly convex closed solution of \eqref{xen+1} is a round sphere. For $F=\sigma_{n}^{\alpha}$ and $C<0$, if $\frac{1}{n+2} \leq  \alpha \leq \frac{1}{2}$, the strictly convex closed solution of \eqref{xen+1} is a round sphere.
    \end{thm}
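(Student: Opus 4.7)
The plan is to prove that $\LF Z+R(\nabla Z)\geq 0$ on $M$ for $F=\sigma_k^{\alpha}$, and then to conclude by the strong maximum principle as in the last paragraph of the proof of Theorem \ref{thmg}. Extra work beyond Theorem \ref{thmg} is needed because under the hypothesis $\alpha\leq\tfrac{1}{2}$ the homogeneity $\beta=k\alpha$ can equal $1$ (when $\alpha=1/k$, $k\leq n-1$) or even drop below $1$ (in the $k=n$ subcase), and the umbilicity argument of Lemma \ref{wmaxg} used in Theorem \ref{thmg} required $\beta>1$.

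First I would decompose $\LF Z+R(\nabla Z)=L_1+L_2+L_3$ via Lemma \ref{z}, and use Lemma \ref{l2l3} to rewrite $L_2+L_3+R(\nabla Z)$ as a sum of manifestly nonnegative squared terms of the type $2F^{2}\frac{\partial\log F}{\partial\lambda_{i}}\lambda_{p}^{-1}(\lambda_{p}^{-1}h_{ppi}-\nabla_{i}\log F)^{2}$ and $2F^{2}\sum_{i\neq p}\frac{\partial\log F}{\partial\lambda_{i}}\lambda_{p}^{-2}\lambda_{i}^{-1}h_{pii}^{2}$, together with a diagonal residual in $(\nabla_{i}\log F)^{2}$ and the $\log$-Hessian expression $F^{2}\lambda_{p}^{-2}\frac{\partial^{2}\log F}{\partial\lambda_{i}\partial\lambda_{j}}h_{iip}h_{jjp}$. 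The latter is controlled by the sharp key inequality of Lemma \ref{qfm} for $\sigma_k$. For the $L_1$ piece, the computation following Lemma \ref{z} together with Corollary \ref{L1=0} give $L_1\geq 0$ when $\beta\geq 1$ and $C\leq 0$, vanishing only at umbilic points; the $k=n$ subcase with $\beta<1$ uses the strict inequality $C<0$ to keep the estimate going.

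The heart of the argument is to apply Lemma \ref{condmin} with the natural choice $t_i=\sigma_k/(\lambda_i\sigma_{k-1;i})$, ensured by the identity $\sum_i\lambda_i\sigma_{k-1;i}=k\sigma_k$ from Proposition \ref{sigprop}, together with $y_i=h_{iip}$ for each fixed $p$ and the distinguished index $m=p$, to absorb the remaining diagonal residual. The bound $\alpha\leq\tfrac{1}{2}$ enters at precisely this point: it is the threshold at which the coefficient $\tfrac{1}{k}\bigl(\tfrac{2\alpha}{t_m}-1\bigr)^{2}-\tfrac{4\alpha^{2}}{t_m}$ produced by Lemma \ref{condmin} becomes compatible with the residual negative contributions from the $(\beta-1)/\beta$ factors appearing in Lemma \ref{l2l3}, so that the total $\LF Z+R(\nabla Z)$ is nonnegative pointwise. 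The strong maximum principle then makes $Z$ constant, and tracing the equality cases through the chain of estimates (in particular the equality case of $L_1=0$ from Corollary \ref{L1=0}) forces $\lambda_1=\cdots=\lambda_n$ everywhere, proving that $M$ is a round sphere.

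The main obstacle will be the delicate algebraic bookkeeping matching the coefficients arising from Lemma \ref{condmin} with those coming out of Lemma \ref{l2l3}, and verifying that the negative $(\beta-1)/\beta$ contributions can be absorbed by the combined gain from Lemma \ref{qfm} and Lemma \ref{condmin} under the precise threshold $\alpha\leq 1/2$. The boundary subcase $k=n$, $\alpha=1/(n+2)$ is the tightest: here both the strictness $C<0$ and the sharpness of Lemma \ref{condmin} at $t_m$ near $2\alpha$ are essential to close the estimate.
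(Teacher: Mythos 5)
Your proposal matches the paper's proof in structure and substance: decompose $\LF Z+R(\nabla Z)=L_1+L_2+L_3$ via Lemmas \ref{z} and \ref{l2l3}, control the $\log$-Hessian term with the key inequality (Lemma \ref{qfm}), apply Lemma \ref{condmin} with $t_i=\sigma_k/(\lambda_i\sigma_{k-1;i})$ (whose reciprocals sum to $k$ by Proposition \ref{sigprop}) to obtain $\LF Z+R(\nabla Z)\geq 0$ pointwise on the stated $\alpha$-range, then conclude by the strong maximum principle and trace the equality case through Corollary \ref{L1=0}. Two small points to tighten: the quantities fed into Lemma \ref{condmin} should be the weighted ones, namely for each fixed $i$ take $y_p=\frac{\sigma_{k-1;p}}{\sigma_k}h_{ppi}$ with distinguished index $m=i$, so that $\sum_p y_p=\nabla_i\log\sigma_k$ and $t_p y_p^{2}=\frac{\sigma_{k-1;p}}{\lambda_p\sigma_k}h_{ppi}^{2}$ reproduces the diagonal residual exactly (the unweighted $y_p=h_{ppi}$ does not pair with the constraint $\sum_p 1/t_p=k$); and for $k=n$ with $\beta=n\alpha<1$, the potentially negative $\frac{\beta-1}{\beta}$ contribution to $L_1$ vanishes identically because $\lambda_i\sigma_{n-1;i}=\sigma_n$ makes $\frac{\partial F}{\partial\lambda_i}\lambda_i$ independent of $i$, so $C<0$ is needed only in the final rigidity step (to make $L_1=0$ force umbilicity), not to salvage the pointwise estimate.
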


    \begin{proof}
        Using Lemma \ref{l2l3} and Lemma \ref{qfm+}, we have
        \begin{align*}
        &\quad\frac{1}{\alpha\sigma_{k}^{2\alpha}}(L_{2}+L_{3})+R(\nabla Z)\\
        &\geq \lambda_{i}^{-1}(-\frac{2\alpha n(k\alpha-1)}{k}\frac{\sigma_{k-1;i}}{\sigma_{k}}+\frac{(n+1)k\alpha-n}{k}\lambda_{i}^{-1})(\nabla_{i}\log \sigma_{k})^{2}\\
        &~+2\frac{\sigma_{k-1;i}}{\sigma_{k}}\lambda_{p}^{-1}(\lambda_{p}^{-1}h_{ppi}-\alpha\nabla_{i}\log \sigma_{k})^{2}+\lambda_{p}^{-2}\frac{\partial^{2}\log \sigma_{k}}{\partial \lambda_{i}\partial \lambda_{j}}h_{iip}h_{jjp}\\
        &~+2\sum_{i\neq p}\frac{\sigma_{k-1;i}}{\sigma_{k}}\lambda_{p}^{-2}\lambda_{i}^{-1}h_{pii}^{2}\\
        &\geq \lambda_{i}^{-1}\Big(-\frac{2\alpha ((n-1)k\alpha-n)}{k}\frac{\sigma_{k-1;i}}{\sigma_{k}}+\frac{(n+1)k\alpha-n}{k}\lambda_{i}^{-1}\Big)(\nabla_{i}\log\sigma_{k})^{2}\\
        &~+2\sum_{i}\frac{\sigma_{k-1;i}}{\sigma_{k}}\sum_{j\neq i}\lambda_{j}^{-1}(\lambda_{j}^{-1}h_{ijj}-\alpha\nabla_{i}\log\sigma_{k})^{2}+\lambda_{i}^{-2}\frac{\sigma_{k-1;p}}{\sigma_{k}}\lambda_{p}^{-1}h_{ppi}^{2}\\
        &~-4\alpha\frac{\sigma_{k-1;i}}{\sigma_{k}}\lambda_{i}^{-2}h_{iii}\nabla_{i}\log\sigma_{k}.
        \end{align*}

        Let $t_{i}=\frac{\sigma_{k}}{\lambda_{i}\sigma_{k-1;i}}$ and using Lemma \ref{condmin}, we have
        \begin{align*}
        &\quad\lambda_{i}^{-2}\frac{\sigma_{k-1;p}}{\sigma_{k}}\lambda_{p}^{-1}h_{ppi}^{2}-4\alpha\frac{\sigma_{k-1;i}}{\sigma_{k}}\lambda_{i}^{-2}h_{iii}\nabla_{i}\log\sigma_{k}\\
        &=\sum_{i}\lambda_{i}^{-2}\Big\{\sum_{p}t_{p}\Big(\frac{\sigma_{k-1;p}}{\sigma_{k}}h_{ppi}\Big)^{2}-4\alpha\frac{\sigma_{k-1;i}}{\sigma_{k}}h_{iii}\Big(\sum_{p}\frac{\sigma_{k-1;p}}{\sigma_{k}}h_{ppi}\Big)\Big\}\\
        &\geq \sum_{i}\lambda_{i}^{-2}\Big(\frac{1}{k}(\frac{2\alpha}{t_{i}}-1)^{2}-\frac{4\alpha^{2}}{t_{i}}\Big)(\nabla_{i}\log\sigma_{k})^{2}.
        \end{align*}

        Then, we obtain
        \begin{align*}
        &\quad\frac{1}{\alpha\sigma_{k}^{2\alpha}}(L_{2}+L_{3})+R(\nabla Z)\\
        &\geq \sum_{i}\lambda_{i}^{-2}\Big\{\frac{2\alpha^{2}}{t_{i}}(\frac{2}{kt_{i}}-n-1)+\alpha(\frac{2(n-2)}{kt_{i}}+n+1)-\frac{n-1}{k}\Big\}(\nabla_{i}\log\sigma_{k})^{2}\\
        &= \sum_{i}\lambda_{i}^{-2}\Big\{(\frac{2\alpha}{t_{i}}-1)((\frac{2}{kt_{i}}-n-1)\alpha+\frac{n-1}{k})\Big\}(\nabla_{i}\log\sigma_{k})^{2}.
        \end{align*}

        Since $t_{i} \geq 1$, if $k\geq 2$ and $\alpha\in[\frac{n-1}{k(n+1)-2},\frac{1}{2}]$, then $\mathcal{L} Z+R(\nabla Z) \geq 0$. By the strong maximum principle, we know $Z$ is constant. Hence, $L_{1}=L_{2}+L_{3}=0$. In case $C<0$ or $\alpha>\frac{1}{k}$, by Corollary \ref{L1=0}, $L_{1}=0$ implies that $M$ is totally umbilic; in other cases, $L_{2}+L_{3}=0$ implies that the second fundamental form is parallel. Either of these implies that the solution is a round sphere.	
    \end{proof}

    For $F=S_{k}^{\frac{1}{k}}$, we have the following theorem.
    \begin{thm}\label{sk2+}
        For $F=S_{k}^{\alpha}$ and $C\leq 0$, if $k\geq 1$ and $\alpha=\frac{1}{k}$, the solution of \eqref{xen+1} is a round sphere.
    \end{thm}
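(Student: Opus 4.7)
Since $F=S_k^{1/k}$ is homogeneous of degree $\beta=k\alpha=1$, the $|X|^2$ correction drops out of $Z$, which reduces to $Z=F\,\tr b$, and the route of Theorem \ref{thmg} through Lemma \ref{wmaxg} is unavailable (that lemma needs $\beta>1$). My plan is instead to show directly that $(\LF+R)Z\geq 0$ pointwise on $M$, conclude that $Z$ is constant by the strong maximum principle, and read off umbilicity from the equality cases of $L_1$, $L_2$, $L_3$.

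To verify the hypotheses, $S_k^{1/k}$ satisfies Condition \ref{condtn} by Lemmas \ref{ki-iii} and \ref{qfm+}, so Lemmas \ref{z} and \ref{l2l3} apply. For $L_1$, with $\beta=1$ and $\partial F/\partial\lambda_i=F\lambda_i^{k-1}/S_k$ one finds
\[
L_1 = CF\Bigl(n - \tr b\cdot\frac{S_{k+1}}{S_k}\Bigr).
\]
Chebyshev's sum inequality applied to the oppositely ordered sequences $(\lambda_i^{k+1})$ and $(1/\lambda_i)$ gives $\tr b\cdot S_{k+1}\geq n S_k$, with equality iff $\lambda_1=\cdots=\lambda_n$. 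Since $C\leq 0$, this yields $L_1\geq 0$.

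For $L_2+L_3+R(\nabla Z)$, I would use Lemma \ref{l2l3}. With $\beta=1$ its first bracket collapses to $F^2\sum_i\lambda_i^{-2}(\nabla_i\log F)^2$, and the log-Hessian
\[
\frac{\partial^2\log F}{\partial\lambda_i\partial\lambda_j}=\delta_{ij}(k-1)\frac{\lambda_i^{k-2}}{S_k}-k\,\frac{\lambda_i^{k-1}\lambda_j^{k-1}}{S_k^{2}},
\]
together with the identity $\nabla_p\log F=S_k^{-1}\sum_i\lambda_i^{k-1}h_{iip}$, combines these two contributions into the scaled Cauchy--Schwarz deficit
\[
(k-1)F^2\sum_p\lambda_p^{-2}\Bigl[\sum_i\frac{\lambda_i^{k-2}}{S_k}h_{iip}^{2}-(\nabla_p\log F)^2\Bigr]\geq 0,
\]
which is nonnegative by exactly the Cauchy--Schwarz step used in Lemma \ref{qfm+}. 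The remaining terms in the bound of Lemma \ref{l2l3} are manifestly nonnegative (their coefficients $\lambda_i^{k-1}/S_k\cdot\lambda_p^{-1}$ and $\lambda_i^{k-2}/S_k\cdot\lambda_p^{-2}$ are positive for $k\geq 1$), so $L_2+L_3+R(\nabla Z)\geq 0$ on $M$.

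Combining, $(\LF+R)Z\geq L_1+L_2+L_3\geq 0$ on $M$. Since $M$ is closed, $Z$ attains its maximum, and the strong maximum principle forces $Z$ to be constant, whence $L_1\equiv 0$ and $L_2+L_3\equiv 0$ pointwise. If $C<0$, the Chebyshev equality case in the analysis of $L_1$ immediately yields $\lambda_1=\cdots=\lambda_n$ at every point and $M$ is a round sphere. The main obstacle is the borderline case $C=0$, where $L_1\equiv 0$ carries no direct information: one must extract $\nabla F\equiv 0$ from the equality cases of Lemma \ref{l2l3}, combining the vanishing of the mixed-index summand ($h_{pii}=0$ for $p\neq i$, hence $h_{iip}=0$ for $i\neq p$ by symmetry), the identity $\lambda_p^{-1}h_{ppi}=\nabla_i\log F$, and the Cauchy--Schwarz equality (which forces $h_{iip}/\lambda_i$ to be independent of $i$ at each fixed $p$) to conclude $h_{iii}=0$ and hence $\nabla_i\log F=0$. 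The self-similar equation then makes the support function $\langle X,\nu\rangle\equiv F+C$ constant, so $M$ is a sphere centered at the origin.
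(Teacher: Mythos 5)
Your proof is correct and follows the same essential route as the paper's: reduce to $\beta=1$, invoke Lemma \ref{l2l3}, absorb the log-Hessian term against the $\lambda_i^{-2}(\nabla_i\log F)^2$ term via the Cauchy--Schwarz identity $\nabla_p\log F=S_k^{-1}\sum_i\lambda_i^{k-1}h_{iip}$, and conclude $Z$ is constant by the strong maximum principle. Your two variations are both sound and in fact a bit more transparent than the paper's: the Chebyshev computation $L_1=CF(n-\tr b\, S_{k+1}/S_k)\geq 0$ is a clean alternative to the paper's appeal to Condition \ref{condtn} iii), and your explicit unwinding of the $C=0$ equality case (extracting $h_{iip}=0$ for $i\neq p$, then $\nabla_i\log F=0$ from $\lambda_p^{-1}h_{ppi}=\nabla_i\log F$ with $p\neq i$, hence $h_{iii}=0$ and a constant support function) fills in what the paper only gestures at by saying ``the same argument as in the proof of Theorem \ref{sigmak2+}''.
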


    \begin{proof}
        Using Lemma \ref{l2l3}, we obtain
        \begin{align*}
        &\quad\frac{1}{\alpha S_{k}^{2\alpha}}(L_{2}+L_{3})+R(\nabla Z)\\
        &\geq \lambda_{i}^{-1}(-2\alpha n(k\alpha-1)\frac{\lambda_{i}^{k-1}}{S_{k}}+\frac{k\alpha(n+1)-n}{k}\lambda_{i}^{-1})(\nabla_{i}\log S_{k})^{2}\\
        &~+2k\frac{\lambda_{i}^{k-1}}{S_{k}}\lambda_{p}^{-1}(\lambda_{p}^{-1}h_{ppi}-\alpha\nabla_{i}\log S_{k})^{2}+\lambda_{p}^{-2}\frac{\partial^{2}\log S_{k}}{\partial \lambda_{i}\partial \lambda_{j}}h_{iip}h_{jjp}\\
        &~+2k\sum_{i\neq p}\frac{\lambda_{i}^{k-1}}{S_{k}}\lambda_{p}^{-2}\lambda_{i}^{-1}h_{pii}^{2}.
        \end{align*}

        Since
        \begin{align*}
        \frac{\partial^{2}\log S_{k}}{\partial \lambda_{i}\partial \lambda_{j}}h_{iip}h_{jjp}&=\frac{k(k-1)\lambda_{i}^{k-2}}{S_{k}}h_{iip}^{2}-(\nabla_{p}\log S_{k})^{2}\\
        &\geq \frac{k-1}{k}(\nabla_{p}\log S_{k})^{2}-(\nabla_{p}\log S_{k})^{2}\\
        &=-\frac{1}{k}(\nabla_{p}\log S_{k})^{2}
        \end{align*}
        where the inequality follows from the Cauchy-Schwarz inequality,
        we have
        \begin{align*}
        &\quad\frac{1}{\alpha S_{k}^{2\alpha}}(L_{2}+L_{3})+R(\nabla Z)\\
        &\geq (k\alpha-1)(-2\alpha n\frac{\lambda_{i}^{k}}{S_{k}}+\frac{n+1}{k})\lambda_{i}^{-2}(\nabla_{i}\log S_{k})^{2}\\
        &~+2k\frac{\lambda_{i}^{k-1}}{S_{k}}\lambda_{p}^{-1}(\lambda_{p}^{-1}h_{ppi}-\alpha\nabla_{i}\log S_{k})^{2}+2k\sum_{i\neq p}\frac{\lambda_{i}^{k-1}}{S_{k}}\lambda_{p}^{-2}\lambda_{i}^{-1}h_{pii}^{2}\\
        &\geq 0.
        \end{align*}

        Thanks to $L_{1}\geq 0$, by the strong maximum principle, $Z$ is constant. Hence, $L_{1}=L_{2}+L_{3}=0$. Using the same argument as in the proof of Theorem \ref{sigmak2+}, we finish the proof.

    \end{proof}

 \begin{proof}[Proof of Theorem A]
        Combining Theorem \ref{thmg}, Theorem \ref{sigmak2+} with Theorem \ref{sk2+} for $k=1$, we complete the proof of Theorem A.
    \end{proof} 	
	
 \begin{proof}[Proof of Theorem B]
        Combining Theorem \ref{thmg} with Theorem \ref{sk2+}, we complete the proof of Theorem B.
 \end{proof}



\begin{thebibliography}{99}
	

		
\bibitem{An94-1} B.~Andrews, \emph{Contraction of convex hypersurfaces in Euclidean space}, Calc. Var. Partial Differential Equations \textbf{2} (1994), no. 2,  151--171.


\bibitem{An-96} B.~Andrews, \emph{Contraction of convex hypersurfaces by their affine normal},  J. Differential Geom. \textbf{43} (1996), No. 2, 207-230.

\bibitem{An-99} B.~Andrews, \emph{Gauss curvature flow: the fate of the rolling stones},  Invent. Math. \textbf{138} (1999), 151-161.
				
\bibitem{An-00} B.~Andrews, \emph{Motion of hypersurfaces by Gauss curvature}, Pacific J. Math. \textbf{195} (2000), no.1, 1--34.

	
\bibitem{An-07} B.~Andrews, \emph{Pinching estimates and motion of hypersurfaces by curvature functions}, J. Reine Angew. Math.  \textbf{608} (2007), 17--33.
	

\bibitem{AC-12} B.~Andrews and X.~Chen,  \emph{Surfaces moving by powers of Gauss curvature},  Pure Appl.
Math. Q. \textbf{8} (2012), 825-834.

	
\bibitem{AGN-16} B.~Andrews, P.~Guan and L.~Ni, \emph{Flow by powers of the Gauss curvature}, Adv.  Math. \textbf{299} (2016), 174--201.
	
\bibitem{A-M-12} B.~Andrews and J.~McCoy, \emph{Convex hypersurfaces with pinched principal curvatures and flow of convex hypersurfaces by high powers of curvature}, Trans. Amer. Math. Soc. \textbf{364} (2012), 3427--3447.


\bibitem{b-c-d} S.~Brendle, K.~Choi and P.~Daskalopoulos, \emph{Asymptotic behavior of flows by powers of the Gaussian curvature}, arXiv:1610.08933, 2016.

\bibitem{c-w} Q.-M.~Cheng and G.X.~Wei, \emph{Complete hypersurfaces of weighted volume-preserving mean curvature flow}, arXiv:1403.3177v4, 2015.

\bibitem{c-d} K.~Choi and P.~Daskalopoulos, \emph{Uniqueness of closed self-similar solutions to the Gauss curvature flow}, arXiv:1609.05487, 2016.


\bibitem{Ch1} B.~Chow, \emph{Deforming convex hypersurfaces by the n-th root of the Gaussian curvature},  J. Differential Geom. \textbf{23} (1985), 117-138.

\bibitem{Ch2} B.~Chow, \emph{Deforming convex hypersurfaces by the square root of the scalar curvature},  Invent. Math. \textbf{87} (1987), 63-82.

\bibitem{FangLaiMa} H.~Fang, M.~Lai and X.~Ma,  \emph{On a class of fully nonlinear flows in K\"{a}hler geometry}, J. Reine Angew. Math. \textbf{653} (2011), 189--220.

\bibitem{Fi} W.J.~Firey, \emph{Shapes of worn stones},   Mathematika (21) \textbf{87} (1974), 1-11.

\bibitem{GaoMa}S.Z.~Gao and H.~Ma, \emph{Self-similar solutions of $\sigma_k^{\alpha}$-curvature flow}, arXiv:1611.0758v1, 2016.
		
\bibitem{Gerhardt} C.~Gerhardt, \emph{Closed Weingarten hypersurfaces in Riemannian manifolds}, J. Differential Geom. \textbf{43} (1996), 612--641.

\bibitem{GuanMa} P.~Guan and X.~Ma, \emph{The Christo¤el-Minkowski problem I: convexity of solutions of a Hessian equation}, Invent. Math. \textbf{151} (2003), 553--577.

\bibitem{g-n} P.~Guan and L.~Ni, \emph{Entropy and a convergence theorem for Gauss curvature flow in high dimensions}, J. Eur. Math. Soc., in press.		

\bibitem{h90} G.~Huisken, \emph{Asymptotic behavior for singularities of the mean curvature flow}, J. Differential Geom. \textbf{31} (1990), 285--299.

\bibitem{lin-tru} M.~Lin and N.~Trudinger, \emph{On some inequalities for elementary symmetric functions}, Bull. Austral. Math. Soc. \textbf{50} (1994), 317-326.


\bibitem{m} J.A.~McCoy, \emph{Self-similar solutions of fully nonlinear curvature flows}, Ann. Sc. Norm. Super Pisa Cl. Sci. (5) \textbf{10} (2011), 317--333.

\bibitem{Ts}  K.~Tso, \emph{ Deforming a hypersurface by its Gauss-Kronecker curvature}, Comm. Pure
Appl. Math. \textbf{38} (1985), 867--882.
			
\end{thebibliography}
\end{document}